  \numberwithin{equation}{section} 
\def\paragraph{\@startsection{paragraph}{4}%
  \z@\z@{-\fontdimen2\font}%
  {\normalfont\itshape}}
\DeclareMathOperator{\one}{\mathbbm{1}} 
\renewcommand{\c}{\mathrm{c}}
\theoremstyle{plain} 
    \newtheorem{theorem}{Theorem}
    \newtheorem{lemma}[theorem]{Lemma}
    \newtheorem{claim}[theorem]{Claim}
\theoremstyle{definition} 
    \newtheorem{definition}[theorem]{Definition}
    \newtheorem{fact}[theorem]{Fact}
    \newtheorem{remark}[theorem]{Remark}
\DeclareMathOperator{\T}{\mathbb{T}}
\DeclareMathOperator{\R}{\mathbb{R}}
\DeclareMathOperator{\Z}{\mathbb{Z}}
\DeclareMathOperator{\N}{\mathbb{N}}
\DeclarePairedDelimiter\ceil{\lceil}{\rceil}
\DeclarePairedDelimiter\floor{\lfloor}{\rfloor}
\newcommand{\e}{\mathrm{e}}
\newcommand{\eps}{\varepsilon}
\newcommand{\vr}{\varphi}
\newcommand{\z}{\mathbf{z}}
\newcommand{\prob}{\mathbf{P}}
\newcommand{\E}{\mathbf{E}}
\newcommand{\la}{\left\langle}
\newcommand{\ra}{\right\rangle}
\definecolor{xdxdff}{rgb}{0.49019607843137253,0.49019607843137253,1}
\definecolor{ffffff}{rgb}{1,1,1}
\definecolor{ududff}{rgb}{0.30196078431372547,0.30196078431372547,1}
\definecolor{zzttqq}{rgb}{0.6,0.2,0}
\begin{document}
\title{Maximum of the membrane model on regular trees}

\author[A. Cipriani]{Alessandra Cipriani}
\address{TU Delft (DIAM), Building 28, van Mourik Broekmanweg 6, 2628 XE, Delft, The Netherlands, \& Department of Statistical Science, UCL, 1-19 Torrington Place, London, WC1E 7HB, UK }
\email{a.cipriani@ucl.ac.uk}
\author[B. Dan]{Biltu Dan}
\address{Department of Mathematics, Indian Institute of Science, Bangalore - 560012, India }
\email{biltudanmath@gmail.com}
\author[R.~S.~Hazra]{Rajat Subhra Hazra}
\address{University of Leiden, Niels Bohrweg 1, 2333 CA, Leiden, The Netherlands \\
\& Theoretical Statistics and Mathematics Unit, Indian Statistical Institute, Kolkata}
\email{r.s.hazra@math.leidenuniv.nl}
\author[R. Ray]{Rounak Ray}
\address{Eindhoven University of Technology, P.O. Box 513, 5600 MB, Eindhoven, The Netherlands}
\email{r.ray@tue.nl}

\date{\today}

\begin{abstract}
The discrete membrane model is a Gaussian random interface whose inverse covariance is given by the discrete biharmonic operator on a graph. In literature almost all works have considered the field as indexed over $\Z^d$, and this enabled one to study the model using methods from partial differential equations. In this article we would like to investigate the dependence of the membrane model on a different geometry, namely trees. The covariance is expressed via a random walk representation which was first determined by \cite{vanderbei}. We exploit this representation on $m$-regular trees and show that the infinite volume limit on the infinite tree exists when $m\ge 3$. Further we determine the behavior of the maximum under the infinite and finite volume measures. 
\end{abstract}
\keywords{Random interfaces, membrane model, trees, extremes, random walk representation}
\subjclass[2000]{60G15, 82B20, 82B41, 60G70 }
\maketitle
\section{Introduction}
The main object of study in this article is the membrane model (MM), also known as discrete bilaplacian or biharmonic model. As a random interface, the MM can be defined as a collection of Gaussian heights indexed over a graph. In this article, we will study the MM on regular trees. Let $\T_m$ be an $m$-regular infinite tree, that is, a rooted tree with the root having $m$-children and each of the children thereafter having $m-1$ children. With abuse of notation we will denote the vertex set of $\T_m$ by $\T_m$ itself. Then the MM is defined to be a Gaussian field $\varphi=(\varphi_{x})_{x\in\T_m}$, whose distribution is determined by the probability measure on $\mathbb{R}^{\T_m}$ with density
\begin{align}\label{eq:prob_measure}
\prob_{\Lambda}(\mathrm{d}\varphi):=\frac1{Z_\Lambda} \exp\left(-\frac{1}{2}\sum_{x\in\T_m}(\Delta\varphi_{x})^2 \right)\prod_{x\in{\Lambda}}\mathrm{d}\varphi_{x}\prod_{x\in\T_m\setminus \Lambda}\delta_{0}(\mathrm{d}\varphi_{x}).
\end{align}
Here $\Lambda\subset\T_m$ is a finite subset, $\Delta$ is the discrete Laplacian defined by
\begin{align} \label{eq:def_Delta}
   \Delta f_x:=\Delta f(x):= \sum\limits_{y\sim x} \frac{1}{m}(f(y)-f(x)),\quad f:\T_m\to \R, \;x\in\T_m,
\end{align}
where $y\sim x$ means that $y$ is a neighbor of $x$, $\mathrm{d}\varphi_{x}$ is the Lebesgue measure on $\R$, $\delta_{0}$ is the Dirac measure at $0,$ and $Z_{\Lambda}$ is a normalising constant. We are imposing zero boundary conditions i.e. almost surely $\varphi_{x}=0$ for all $x\in\T_m\setminus{\Lambda}$, but the definition
holds for more general boundary conditions.

 The membrane model was introduced and studied mostly in the case $\Lambda\subset \Z^d$. For example, the existence of an infinite volume measure for $d\ge 5$ was proved in \cite{Sakagawa} and later the model and its properties were studied in details in \cite{Kurt_thesis}. The point process convergence of extremes on $\Z^d$ for $d\ge 5$ was dealt with in \cite{CCHInterfaces}. The case of $d=4$ is related to log-correlated models and the limit of the extremes was derived in \cite{schweiger:2019}. Finally the scaling limit of the maximum in lower dimensions follows from the scaling limit of the model which was obtained by \cite{CarJDScaling} in $d=1$ and by~\cite{mm_scaling} in $d=2,\,3$. 

The discrete Gaussian free field (DGFF) is a well studied example of a discrete interface model and has connections to other stochastic processes, such as branching random walk and cover times. Most of these connections arise due to the fact that the covariance of the DGFF is the Green's function of the simple random walk. This is not the case for the MM, essentially because the biharmonic operator does not satisfy a maximum principle. This also depends heavily on the boundary conditions: closed formulas for the bilaplacian covariance matrix have been found~\citep{Kurt_thesis,hirschler2021laplace,hirschler2020polyharmonic}, however they do not apply to our choice of boundary values. On the square lattice one can rely on other techniques, namely discrete PDEs, to prove results in the bilaplacian case. However as soon as one goes beyond $\Z^d$ approximations of boundary value problems are less straightforward, and our work is prompted from this aspect. We will use a probabilistic solution of the Dirichlet problem for the bilaplacian \citep{vanderbei} to investigate the membrane model indexed on regular trees. We restrict our study to regular trees because these graphs have many features which are different from $\Z^d$. One of the most striking contrasts is that the number of vertices in the $n$-th generation is comparable to the size of the graph up to the $n$-th generation. From Vanderbei's representation, it is clear that the boundary plays a prominent role in the behavior of the covariance structure. {We will use this representation to derive the maximum of the field under the infinite and finite volume measures. In the next section we describe our set-up and also state the main results, followed by a discussion on future directions.} 

\paragraph{\bf Acknowledgement.}
Part of this work was carried out when BD, RSH and RR were at Indian Statistical Institute, Kolkata. They thank the institute for the great hospitality. RSH thanks Sayan Das for sharing an estimate for Lemma~\ref{lem:C_k}.  The authors would like to thank two anonymous referees for several comments and suggestions that improved considerably the article. In particular we would like to thank one referee for suggesting us the proof presented in Appendix~\ref{app:alternative}.

AC is supported by the grant 613.009.102 of the Netherlands Organisation for Scientific Research (NWO). BD is supported by IISc through C. V. Raman postdoctoral fellowship and RSH was supported by the DST-MATRICS grant.\begin{wrapfigure}{r}{0.08\textwidth}
    \includegraphics[width=0.08\textwidth]{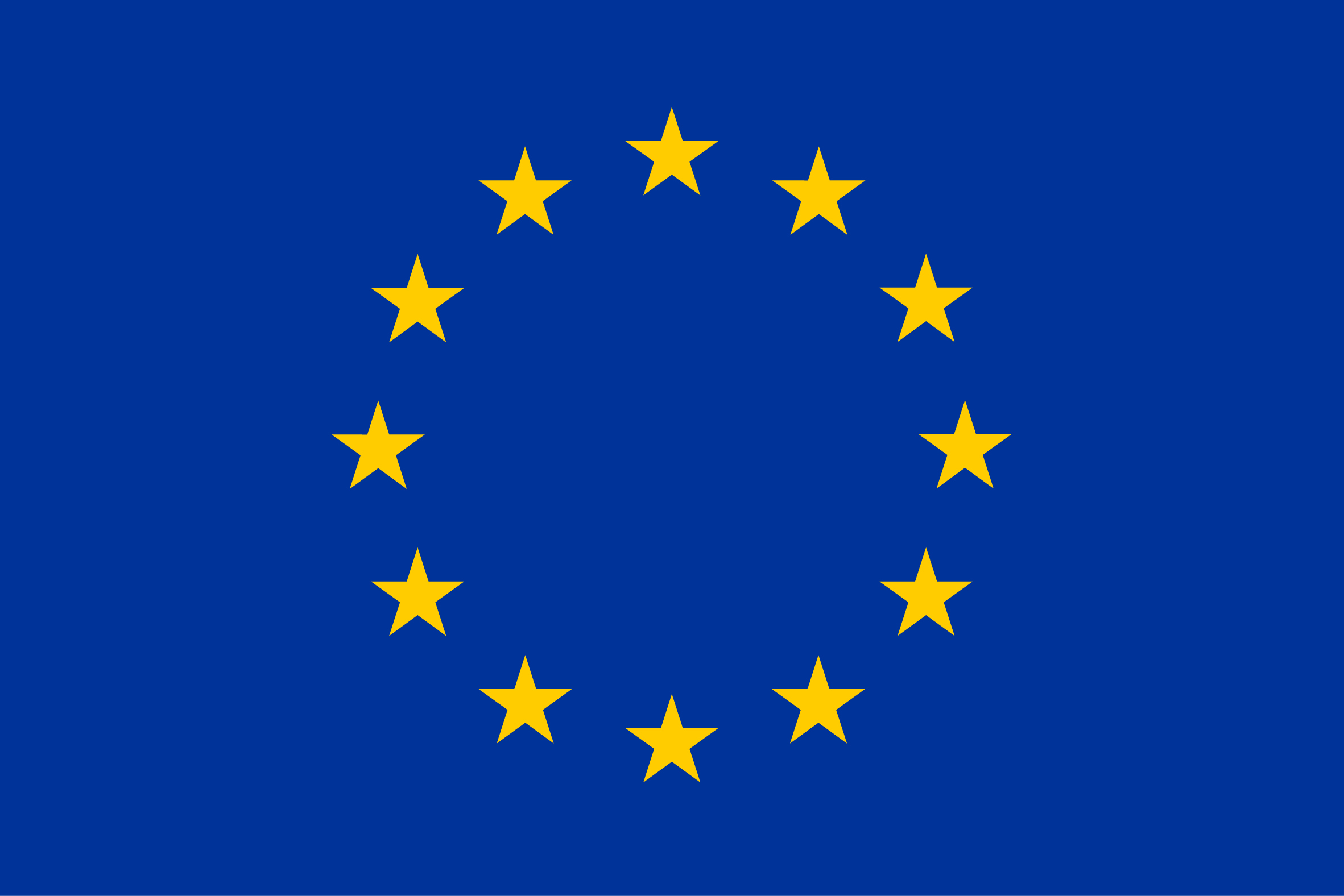}
\end{wrapfigure}  RR is supported by the European Union’s Horizon 2020 research and innovation programme under the Marie Skłodowska-Curie grant agreement no. 945045,  and by the NWO Gravitation project NETWORKS under grant no. 024.002.003.

\section{Main results}
\subsection{The model}
For any two vertices $x,y\in\T_m$, we denote $d(x,y)$ to be the graph distance between $x$ and $y$. Then the Laplacian, whose definition was given in~\eqref{eq:def_Delta}, can also be viewed as the following matrix:
\begin{align}\label{eq:matrix_lap}
\Delta(x,y) = \begin{cases} -1\hspace{4pt}\text{ if } x=y,\\
\frac1{m} \hspace{10pt}\text{ if } d(x,y)=1,\\
0\hspace{12pt}\text{ otherwise.}\end{cases}
\end{align}
We write $\Delta^2$ for its iteration, i.e., $\Delta^2f_x:=\Delta(\Delta f(x))$ and define $\Delta^2_\Lambda$ to be the matrix $(\Delta^2(x,y))_{x,y\in\Lambda}$.
\begin{lemma}\label{lem:model_def}
The Gibbs measure $\prob_\Lambda$ on $\R^\Lambda$ with $0$-boundary conditions outside $\Lambda$ given by~\eqref{eq:prob_measure} exists for any finite subset $\Lambda$. It is the centered Gaussian field on $\Lambda$ with covariance matrix $(\Delta^2_\Lambda)^{-1}$. 
\end{lemma}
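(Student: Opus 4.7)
The plan is standard: rewrite the density in \eqref{eq:prob_measure} as $\exp\bigl(-\tfrac12\langle\varphi,\Delta^2_\Lambda\varphi\rangle\bigr)$ up to the normalising constant, and then verify that the matrix $\Delta^2_\Lambda$ is symmetric and strictly positive definite. Once this is done, the claim is the standard characterisation of centred Gaussian densities.

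First I would extend $\varphi\in\R^\Lambda$ by the prescribed zero boundary condition to $\tilde\varphi\in\R^{\T_m}$ supported on $\Lambda$. Since $\Lambda$ is finite, $\Delta\tilde\varphi_x=0$ whenever $x$ and all its neighbours lie outside $\Lambda$, so $\sum_{x\in\T_m}(\Delta\tilde\varphi_x)^2$ is in fact a finite sum. The entries $\Delta(x,y)$ in \eqref{eq:matrix_lap} are symmetric in $x,y$, so expanding $(\Delta\tilde\varphi_x)^2$ as a double sum and interchanging the order of summation gives
\[
\sum_{x\in\T_m}(\Delta\tilde\varphi_x)^2=\sum_{z,w\in\T_m}\tilde\varphi_z\,\Delta^2(z,w)\,\tilde\varphi_w,
\]
and because $\tilde\varphi$ vanishes off $\Lambda$ only the indices $z,w\in\Lambda$ contribute, yielding $\langle\varphi,\Delta^2_\Lambda\varphi\rangle$.

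The heart of the argument is positive definiteness of $\Delta^2_\Lambda$. Nonnegativity is immediate since the quadratic form equals $\sum_x(\Delta\tilde\varphi_x)^2\ge 0$. If it vanishes, then $\tilde\varphi$ is discrete-harmonic on all of $\T_m$ and equals $0$ outside $\Lambda$. Applying the discrete maximum principle on the finite set $\Lambda$ with zero boundary data on $\partial\Lambda:=\{y\in\T_m\setminus\Lambda:d(y,\Lambda)=1\}$---equivalently, using the representation $\tilde\varphi_x=\E_x[\tilde\varphi_{X_\tau}]$, where $\tau$ is the a.s.~finite exit time from $\Lambda$ of the simple random walk $X$---forces $\tilde\varphi\equiv 0$ on $\Lambda$, hence $\varphi=0$. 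Thus $\Delta^2_\Lambda$ is symmetric (obvious from the formula) and strictly positive definite.

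Combining these two observations, \eqref{eq:prob_measure} is exactly the density of the centred Gaussian on $\R^\Lambda$ with covariance $(\Delta^2_\Lambda)^{-1}$, with normalising constant $Z_\Lambda=(2\pi)^{|\Lambda|/2}\det(\Delta^2_\Lambda)^{-1/2}$. The only genuine obstacle is the positive-definiteness step: on $\Z^d$ one typically invokes Fourier methods that are unavailable on $\T_m$, but the maximum-principle argument above uses only that $\Lambda$ is finite and $\T_m$ is connected, so it transfers to the tree setting without difficulty.
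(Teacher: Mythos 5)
Your proof follows essentially the same strategy as the paper: rewrite the Hamiltonian as the quadratic form $\langle\varphi,\Delta^2_\Lambda\varphi\rangle$ via summation by parts (or expanding the square and swapping sums, which amounts to the same identity), observe that this equals $\sum_x(\Delta\tilde\varphi_x)^2\ge 0$, and conclude positive definiteness. The paper, after reducing the quadratic form to $\sum_x g(x)^2$ with $g=\Delta f$, simply writes ``$>0$'' without justifying that $\Delta f\not\equiv 0$ when $f$ is a nonzero finitely supported function; your proof fills exactly this gap. You point out that if $\Delta\tilde\varphi\equiv 0$ then $\tilde\varphi$ is discrete-harmonic on $\Lambda$ with zero boundary data on $\partial\Lambda$, and either the discrete maximum principle or the optional-stopping representation $\tilde\varphi_x=\E_x[\tilde\varphi_{X_\tau}]$ (with $\tau$ the exit time of the finite set $\Lambda$) forces $\tilde\varphi\equiv 0$. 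This is the correct and, in fact, necessary argument: on an infinite graph there are many nontrivial harmonic functions, so the ``$>0$'' claim is not automatic, and what saves it is precisely that a harmonic function with finite support must vanish. The only stylistic difference is that the paper then invokes a general existence result (Proposition 13.13 of Georgii) to conclude, whereas you conclude directly by recognising the Gaussian density with covariance $(\Delta^2_\Lambda)^{-1}$ and normaliser $Z_\Lambda=(2\pi)^{|\Lambda|/2}\det(\Delta^2_\Lambda)^{-1/2}$; for a finite-dimensional Gaussian with fixed boundary data these are equivalent.
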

\begin{proof}
We first prove that $\Delta^2$ is symmetric and positive definite, i.e., for any function $f:\T_m \to \R$ which vanishes outside a finite subset and which is not identically zero
\begin{align}\label{eq:pd_bilap}
\sum_{x,y\in\T_m} f(x) \Delta^2(x,y) f(y) >0.\end{align}
From~\eqref{eq:matrix_lap} it is clear that $\Delta$ is symmetric, and hence $\Delta^2$ is so. Let $g=\Delta f$ and to prove~\eqref{eq:pd_bilap} we observe that
\begin{align*}
\sum_{x,y\in\T_m} f(x) \Delta^2(x,y) f(y)&= \sum_{x\in\T_m} f(x) \Delta g(x)
= \frac{1}{m}\sum_{x\in\T_m} f(x) \sum\limits_{y\sim x} (g(y)-g(x))\\
&= \frac{1}{m}\sum_{x\in\T_m} g(x) \sum\limits_{y\sim x} (f(y)-f(x))=\sum_{x\in\T_m} g(x) g(x) >0.
\end{align*}
Also, one can show using summation by parts that if $\varphi:\T_m\to\R$ vanishes outside $\Lambda$ then  
\begin{align*}
\sum_{x\in\T_m}(\Delta\varphi_{x})^2 =\sum_{x\in\T_m}\varphi_x\Delta^2\varphi_x.
\end{align*}
The proof is now complete by using Proposition 13.13 of~\cite{Georgii}.
\end{proof}

\subsection{Main results}
We denote the root of the tree by $o$. We will consider $m\ge 3$. In the case when $m=2$ the tree is isomorphic to $\Z$ and the MM on $\Z$ has been studied in the literature, see for instance~\cite{CarJDScaling,CaravennaDeuschel_pin}. For any $n\in \N$, we define 
\begin{align*}
V_n:= \{x\in\T_m: d(o,x) \le n\}.
\end{align*}
Let $\vr=(\varphi_{x})_{x\in\T_m}$ be the membrane model on $\T_m$ with zero boundary conditions outside $V_n$. In this case, we denote the corresponding measure $\prob_{V_n}$ by $\prob_n$. Also we denote the covariance function for this model by $G_n$, that is, $G_n(x,y):= \E_n[\vr_x\vr_y]$. Let $(S_k)_{k\ge0}$ be the simple random walk on $\T_m$. We write $\prob_x$ for the canonical law of the simple random walk starting at $x$. The following theorem proves the existence of the infinite volume limit.
\begin{theorem}\label{thm:inf_vol}
The measures $\prob_n$ converge weakly to a measure $\prob$, which is the law of a Gaussian process $(\vr_{x})_{x\in\T_m}$ with covariance function $G$ given by
\begin{align*}
G(x,y):=\E[\vr_x\vr_y] = \E_x\left[\sum_{k=0}^\infty (k+1)\one_{[S_k=y]}\right] = \sum_{k=0}^\infty (k+1)\prob_x(S_k=y).
\end{align*}
\end{theorem}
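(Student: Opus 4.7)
The plan is to prove pointwise convergence of the covariances, $G_n(x,y)\to G(x,y)$ for every pair of vertices, and then to deduce weak convergence of the centered Gaussian measures $\prob_n$ on $\R^{\T_m}$ equipped with the product topology.

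First, I would seek a probabilistic representation of $G_n$. Writing $\Delta=P-I$ with $P$ the simple random walk transition operator on $\T_m$, one has $\Delta^2=(I-P)^2$. Let $\tau_n=\inf\{k\ge 0:S_k\notin V_n\}$ be the first exit time of the walk from $V_n$ and let $P_n$ denote the sub-stochastic kernel obtained from $P$ by killing on exit. A direct computation shows that the restriction matrix $\Delta^2_{V_n}$ equals $(I-P_n)^2$ plus a diagonal correction $D_n$ supported on the sphere $\partial V_n=\{x:d(o,x)=n\}$ with magnitude $(m-1)/m^2$: it accounts for the $m-1$ children of a boundary vertex that contribute to $P^2(x,x)$ but not to $P_n^2(x,x)$. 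Decomposing $\Delta^2_{V_n}=(I-P_n)^2+D_n$ and inverting (by a resolvent expansion, or equivalently by splitting the bilaplacian Dirichlet problem into two successive Laplacian problems à la Vanderbei) yields
\begin{align*}
G_n(x,y)=\E_x\!\left[\sum_{k=0}^{\tau_n-1}(k+1)\,\one_{[S_k=y]}\right]+R_n(x,y),
\end{align*}
where $R_n$ is an explicit sum of terms in which the walk visits $\partial V_n$ weighted by the diagonal correction $D_n$.

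Next I would pass to the limit $n\to\infty$. For $m\ge 3$ the walk on $\T_m$ is transient, the spectral radius of $P$ equals $2\sqrt{m-1}/m<1$, and the transition probabilities $\prob_x(S_k=y)$ decay geometrically in $k$. Consequently the series $G(x,y)=\sum_{k\ge 0}(k+1)\prob_x(S_k=y)$ converges absolutely. Since $V_n\uparrow\T_m$, the exit times $\tau_n$ diverge $\prob_x$-a.s.; dominated convergence therefore gives
\begin{align*}
\E_x\!\left[\sum_{k=0}^{\tau_n-1}(k+1)\,\one_{[S_k=y]}\right]\longrightarrow G(x,y).
\end{align*}
A quantitative estimate on the probability that the walk from a fixed $x$ ever reaches $\partial V_n$ — again decaying geometrically in $n$ on $\T_m$ — forces $R_n(x,y)\to 0$. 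Hence $G_n(x,y)\to G(x,y)$ pointwise on $\T_m\times\T_m$.

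Weak convergence then follows from the standard criterion for centered Gaussians on $\R^{\T_m}$ with the product topology. Each $\prob_n$ is the centered Gaussian measure with covariance $G_n$ (interpreting $\varphi_x\equiv 0$ deterministically for $x\notin V_n$). For any finite $F\subset\T_m$ and all $n$ large enough that $F\subset V_n$, the marginal of $\prob_n$ on $\R^F$ is the centered Gaussian with covariance $G_n|_{F\times F}$, which converges weakly to the centered Gaussian with covariance $G|_{F\times F}$. Convergence of all finite-dimensional marginals then yields weak convergence of $\prob_n$ to the Gaussian measure $\prob$ with covariance $G$, as claimed. The main obstacle is the identification in the first step: because $\Delta^2_{V_n}\neq(I-P_n)^2$, the inverse is not merely $\sum_k(k+1)P_n^k$, and one must carefully track the boundary correction $R_n$ arising from $D_n$. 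The rigid tree geometry — exponential volume growth matched by geometric decay of the heat kernel — is precisely what makes both the main term and the remainder estimates tractable.
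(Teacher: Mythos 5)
Your decomposition $\Delta^2_{V_n}=(I-P_n)^2+D_n$ with $D_n$ a diagonal correction of magnitude $(m-1)/m^2$ on the sphere $\partial_0 V_n=\{x:d(o,x)=n\}$ is correct (on a tree no two distinct vertices of $V_n$ share a neighbor outside $V_n$, so the discrepancy $P^2-P_n^2$ really is diagonal) and is a genuinely different, more algebraic route than the paper's, which invokes \citet{vanderbei}'s excursion-theoretic representation in terms of the iterated return times $\tau_i$ and the boundary operator $Q$. Your first step, $\overline{G}_n(x,y)\to G(x,y)$ by dominated (the paper uses monotone) convergence, is fine. Weak convergence of Gaussians from pointwise covariance convergence is standard and matches the paper.

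The gap is in your treatment of the remainder. You argue that $R_n(x,y)\to 0$ because ``the probability that the walk from a fixed $x$ ever reaches $\partial V_n$ \ldots decays geometrically in $n$.'' This is false: since the walk on $\T_m$ is transient and $V_n\uparrow\T_m$, the exit time $\tau_n$ is a.s.\ finite, hence the walk reaches $\partial_0 V_n$ with probability one. The true mechanism is subtler. Writing $G_n=\overline{G}_n-\overline{G}_n D_n G_n$, one has
\begin{align*}
R_n(x,y)=-\frac{m-1}{m^2}\sum_{z\in\partial_0 V_n}\overline{G}_n(x,z)\,G_n(z,y).
\end{align*}
Each factor $\overline{G}_n(x,z)$, $G_n(z,y)$ for $z$ on the sphere is of order $n(m-1)^{-(n-\mathrm{const})}$, but the number of boundary vertices is of order $(m-1)^n$; it is the \emph{product} of the two Green-function decays that beats the exponential volume, leaving an overall bound of the form $n^4(m-1)^{-n}$. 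Establishing this requires precise two-sided control of the Green's functions near the boundary; the paper achieves it via a Cauchy--Schwarz argument combining $\E_x[\tau_0^2]\le Cd(x,\partial_1V_n)^2$ (Lemma~\ref{lem:bound_tau^2}) with $\sum_{z\in\partial_1V_n}G(z,y)^2\le C\,d(y,\partial_1V_n)^2(m-1)^{-d(y,\partial_1V_n)}$, together with the operator bound $\|(I+Q)^{-1}\|\le1$. Your sketch elides precisely this estimate; a related point is that the Neumann series $\sum_j(-\overline{G}_nD_n)^j\overline{G}_n$ need not converge (the entries of $\overline{G}_n$ near the boundary are not uniformly small), so it is safer to use the one-step resolvent identity rather than the full expansion. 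Once the remainder estimate is supplied, your route gives the theorem, and it is a somewhat more elementary path than quoting Vanderbei's Theorem~3; but as written, the key bound is unjustified.
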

We will see later (in Lemma~\ref{lem:G}) that for any $x\in \T_m$ $$G(x,x)=G(o,o)=\frac{(m-1)((m-1)^2+1)}{(m-2)^3}.$$
We define two sequences as follows
\begin{align*}
b_n:= \sqrt{G(o,o)}\left[\sqrt{2\log N} -\frac{\log\log N + \log{(4\pi)}}{2\sqrt{2\log N}}\right],\,\,\,a_n:= G(o,o)b_n^{-1},
\end{align*}
where $N:=|V_n|$. We have 
\begin{align}
N= 1+\sum_{k=1}^n m(m-1)^{k-1} = \frac{m(m-1)^n-2}{m-2}.\label{eq:relNn}
\end{align}
Our main result in this paper concerns the scaling limit of the maximum of the field, namely the Gumbel convergence of the rescaled maximum.
\begin{theorem}\label{thm:max}
For any $\theta\in\R$
\begin{align*}
\lim_{n\to\infty} \prob \left( \frac{\max_{x\in V_n} \vr_x - b_n}{a_n} \le \theta\right) = \exp(-\e^{-\theta}).
\end{align*}
\end{theorem}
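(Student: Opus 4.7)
The plan is to reduce the problem to the classical Gumbel limit for i.i.d.\ standard Gaussians via the normal comparison (Berman) inequality. Standardize $\widetilde{\varphi}_x := \varphi_x/\sqrt{G(o,o)}$, so that $\widetilde\varphi$ is a centred, unit-variance Gaussian field with correlations $r(x,y) := G(x,y)/G(o,o)$, and set $u_n := (b_n+\theta a_n)/\sqrt{G(o,o)}$, so that $\{\max_{V_n}\varphi_x \le b_n+\theta a_n\} = \{\max_{V_n}\widetilde\varphi_x \le u_n\}$. The choice of $b_n,a_n$ is precisely the classical normalization for i.i.d.\ standard Gaussians, so a routine tail asymptotic gives $N(1-\Phi(u_n))\to e^{-\theta}$ and hence $\Phi(u_n)^N\to\exp(-e^{-\theta})$. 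It therefore suffices to prove that $\prob\bigl(\max_{V_n}\widetilde\varphi_x \le u_n\bigr)-\Phi(u_n)^N\to 0$, which by the normal comparison lemma reduces to
\begin{align*}
\sum_{\substack{x,y\in V_n \\ x\ne y}} |r(x,y)|\,\exp\!\left(-\frac{u_n^2}{1+|r(x,y)|}\right)\longrightarrow 0.
\end{align*}

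The key analytic input is a sharp off-diagonal bound on $G$. Because $\Delta^2=(-\Delta)^2$ and $-\Delta = I-P$ where $P$ is the simple random walk transition matrix, one has $G = g^2$ with $g(x,y)=\sum_{k\ge 0}\prob_x(S_k=y)$ the simple random walk Green's function on $\T_m$. Solving the one-step hitting-probability equation $q = 1/m + (m-1)q^2/m$ gives the non-trivial root $q=1/(m-1)$, hence $g(x,y)=\tfrac{m-1}{m-2}\,(m-1)^{-d(x,y)}$. Expanding $G(x,y)=\sum_z g(x,z)g(z,y)$ with the tree identity $d(x,z)+d(z,y) = d(x,y)+2\,d(z,[x,y])$ and summing over the geodesic $[x,y]$ together with the subtrees branching off it yields
\begin{align*}
G(x,y)\le C(m)\,(d(x,y)+1)\,(m-1)^{-d(x,y)},
\end{align*}
from which $r^\ast:=\sup_{x\ne y} r(x,y)<1$ (attained at $d=1$, where an explicit computation gives $r^\ast = 2(m-1)/((m-1)^2+1)$, e.g.\ $r^\ast=4/5$ for $m=3$).

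To control the Berman sum, split by $d:=d(x,y)$; the number of ordered pairs in $V_n$ at distance $d$ is at most $Nm(m-1)^{d-1}$. Fix $k_0 = k_0(m)$ so that $C(k+1)(m-1)^{-k}\le 1/4$ for $k\ge k_0$. For $d\le k_0$, use $|r(x,y)|\le r^\ast$ together with $\exp(-u_n^2/(1+r^\ast)) = O\bigl(N^{-2/(1+r^\ast)}(\log N)^{O(1)}\bigr)$ and a pair count $O(N)$; the contribution is $O\bigl(N^{1-2/(1+r^\ast)}(\log N)^{O(1)}\bigr)=o(1)$ since $r^\ast<1$. For $d>k_0$, use $1/(1+r)\ge 1-r$ and $e^{-u_n^2}=O(\log N/N^2)$, together with $u_n^2 r_d\le 2\log N\cdot 1/4 = \log N/2$ so that $e^{u_n^2 r_d}\le N^{1/2}$; each pair contributes $O\bigl((d+1)(m-1)^{-d}\log N\cdot N^{-3/2}\bigr)$, and multiplying by the pair count $N(m-1)^d$ and summing over $d\le 2n=O(\log N)$ gives $O\bigl(n^2\log N\cdot N^{-1/2}\bigr)=o(1)$.

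The main obstacle is the fine balance in this last step: on $\T_m$ the $(m-1)^d$-growth of the number of pairs at distance $d$ exactly cancels the $(m-1)^{-d}$-decay of $r(x,y)$, so neither factor alone yields summability and one must exploit the super-Gaussian tail $\exp(-u_n^2/(1+r))$, with a careful matching of the two regimes at the threshold $d=k_0$. Verifying $r^\ast<1$ is essential (and easy thanks to the explicit formula for $r_1$), and choosing $k_0$ to separate the two regimes is the combinatorial heart of the proof.
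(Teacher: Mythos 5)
Your proposal is correct, and while it arrives at essentially the same quantitative core (a sum over off-diagonal pairs weighted by a super-Gaussian tail), it differs from the paper in three substantive ways. First, the paper proves the result through a Stein--Chen Poisson approximation for the exceedance count $W_n$ (Theorem~3.1 of~\cite{hj90}), bounding $d_{TV}(W_n,\mathrm{Poi}(\lambda_n))$, whereas you invoke the Berman normal comparison lemma directly on $\prob(\max\le u_n)-\Phi(u_n)^N$; both routes reduce to controlling the same kind of covariance sum and, since Lemma~3.4 of~\cite{hj90} delivers bounds of Berman type anyway, the analytical work overlaps heavily, though your route is arguably shorter because it does not require the extra term $\sum_x\prob(\psi_x>u_n)^2$. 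Second, your closed form for the covariance is cleaner: you observe that in infinite volume $G=(I-P)^{-2}=g^2$ with $g$ the random-walk Green's function, then compute $g(x,y)=\tfrac{m-1}{m-2}(m-1)^{-d(x,y)}$ from the one-step hitting recursion $q=1/m+(m-1)q^2/m$; the paper instead derives $G$ from Woess's generating-function identity and differentiates. Your identity $G=g^2$ is elegant and exactly reproduces $G(x,y)=\sum_k(k+1)p_k(x,y)$. Third, you split the pair sum at a \emph{fixed} distance $k_0(m)$ and use the crude pair count $N m(m-1)^{d-1}$, while the paper splits at $\lfloor 2n\delta\rfloor$ with $\delta<(1-r_1)/(1+r_1)$ and invokes the sharper Lemma~\ref{lem:C_k} bound $C_k\le C(m-1)^{n+\lfloor k/2\rfloor}$. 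Your cruder count overcounts for $d>n$, but, as you correctly note, the exponential margin $N^{-1/2}$ absorbs the discrepancy, so your version is both correct and simpler. Two small points worth tightening: (i) you assert $r^\ast=r_1$ ``attained at $d=1$'' without proving monotonicity of $r_d$ in $d$ --- this is true (and provable by comparing consecutive terms of the explicit formula), but for the Berman argument all you really need is $\sup_{d\ge 1}r_d\le\rho<1$, which follows immediately from $r_d\le C(d+1)(m-1)^{-d}$ for $d>k_0$ and from $r_d<1$ for each of the finitely many $d\le k_0$; and (ii) the paper's displayed formula in Lemma~\ref{lem:G} has a sign typo (the proof gives $+2(m-1)$, not $-2(m-1)$), so be aware your explicit $r_1=2(m-1)/((m-1)^2+1)$ matches the \emph{proof's} formula, which is the correct one.
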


We show in the following result that up to the first order the constants do not change for the extremes and when we look at the expected maximum under the finite volume, it still converges to $\sqrt{G(o,o)}$, after appropriate scaling. The same result can be proved under the infinite volume measure, so we stick to the finite volume case, the infinite volume situation being completely analogous.

\begin{theorem}\label{thm:exp_max_0}
For $m\ge 3$,
\begin{align*}
\lim_{n\to\infty} \frac{\E_n \left[ \max_{x\in V_n} \varphi_x \right]}{\sqrt{2\log N}}= \sqrt{G(o,o)}.
\end{align*}
\end{theorem}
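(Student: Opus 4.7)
The plan is to prove matching upper and lower asymptotic bounds for $\E_n[M_n]$, where $M_n := \max_{x\in V_n}\varphi_x$. The upper bound is a standard union-bound computation; the substantive content is the lower bound, which I would prove by a Paley--Zygmund second-moment argument on a bulk subset of $V_n$, together with Gaussian concentration to pass from a tail estimate to the expectation.

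For the upper bound, Vanderbei's random walk representation implies $G_n(x,x) \le G(x,x) = G(o,o)$ for every $x \in V_n$, since the defining series for $G_n$ is effectively truncated to excursions killed at the boundary. A union bound with the Gaussian tail then yields
\[
\prob_n(M_n > t) \le N\exp\!\bigl(-t^2/(2G(o,o))\bigr).
\]
Setting $L_\eps := (1+\eps)\sqrt{2G(o,o)\log N}$ and using $\E_n[M_n] \le \E_n[M_n^+] = \int_0^\infty \prob_n(M_n > t)\,\De t$,
\[
\E_n[M_n] \le L_\eps + \int_{L_\eps}^\infty \prob_n(M_n > t)\,\De t \le L_\eps + \frac{G(o,o)}{L_\eps}\,N^{1-(1+\eps)^2},
\]
so $\limsup_n \E_n[M_n]/\sqrt{2\log N} \le (1+\eps)\sqrt{G(o,o)}$, and sending $\eps\to 0$ closes this side.

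For the lower bound, fix $\eps>0$ and $\delta>0$ with $(1-\eps)^2/(1-\delta) < 1$, and take a bulk subset $A_n := V_{n-k_n}$ with $k_n\to\infty$ slowly, so that $|A_n| \ge cN(m-1)^{-k_n}$ remains at least $N^{1-\eps'}$ for a chosen small $\eps'$, while uniformly in $x\in A_n$ one has $G_n(x,x) \ge (1-\delta)G(o,o)$ (the random walk from $x$ must travel at least distance $k_n$ to be killed, and the tail of the associated series vanishes uniformly). Set $t_n := (1-\eps)\sqrt{2G(o,o)\log N}$ and $N(t_n) := \#\{x \in A_n : \varphi_x > t_n\}$. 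The Gaussian tail lower bound gives
\[
\E_n[N(t_n)] \;\gtrsim\; \frac{|A_n|}{\sqrt{\log N}}\,N^{-(1-\eps)^2/(1-\delta)} \;\to\; \infty.
\]
For the second moment, split pairs $(x,y) \in A_n^2$ by graph distance: the bivariate Gaussian tail
\[
\prob_n(\varphi_x > t_n,\varphi_y > t_n) \le \exp\!\Bigl(-\tfrac{2t_n^2}{G_n(x,x)+G_n(y,y)+2G_n(x,y)}\Bigr),
\]
together with $G_n(x,y) \to 0$ as $d(x,y)\to\infty$ (inherited from $G(x,y)\to 0$, which follows from Vanderbei's representation $G(x,y) = \sum_k(k+1)\prob_x(S_k=y)$ and the transience of SRW on $\T_m$ for $m\ge 3$), shows that long-range pairs factorise up to $1+o(1)$, while the number of short-range pairs is too small to spoil the estimate. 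Paley--Zygmund then gives $\prob_n(M_n > t_n) \ge \E_n[N(t_n)]^2/\E_n[N(t_n)^2] \to 1$.

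Finally, Borell--TIS applied to the centered Gaussian field $(\varphi_x)_{x\in V_n}$ under $\prob_n$, with ambient variance $\max_x G_n(x,x) \le G(o,o)$, gives sub-Gaussian concentration of $M_n$ around its median with $O(1)$ fluctuations. Combined with $\prob_n(M_n > t_n) \to 1$, this forces $\E_n[M_n] \ge t_n - O(1)$, hence $\liminf_n \E_n[M_n]/\sqrt{2\log N} \ge (1-\eps)\sqrt{G(o,o)}$, and $\eps\to 0$ completes the proof. The main obstacle is the quantitative decay of $G_n(x,y)$ in $d(x,y)$: the extra factor $(k+1)$ in Vanderbei's representation overweights late visits of the walk, and although transience on $\T_m$ for $m \ge 3$ guarantees convergence, extracting decay uniform both in $n$ and across bulk pairs is the delicate analytic step, likely relying on the random walk estimates already developed for Theorems~\ref{thm:inf_vol} and~\ref{thm:max}.
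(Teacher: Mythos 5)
Your overall plan (union bound for the upper asymptotic, second-moment method plus Gaussian concentration for the lower) is a genuinely different route from the paper: the paper gets the upper bound directly from Jensen's inequality applied to $\E_n[\sum_x \e^{\beta\varphi_x}]$, and its lower bound is a Sudakov--Fernique comparison of $(\varphi_x)_{x\in U_n}$ with i.i.d.\ Gaussians on a carefully chosen sparse subset $U_n\subset\partial_0 V_{n-\lfloor\log n\rfloor}$ in which all pairs are separated by distance $\ge 2\lfloor\log n\rfloor$. That sparse-subset device sidesteps your second-moment computation entirely: it requires only a pointwise upper bound on $G_n(x,y)$ for $x\ne y \in U_n$ plus a lower bound on $G_n(x,x)$, both of which come from the decomposition $G_n = G - \overline{E}_n - E_n$ and the boundary estimates already in hand. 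By contrast your choice $A_n = V_{n-k_n}$ includes many close pairs (there are roughly $|A_n|(m-1)^{k/2}$ ordered pairs at distance $k$), so the Paley--Zygmund denominator needs a careful split by distance using the covariance decay $r_k\lesssim k(m-1)^{-k}$. This can be made to work, but it is more work and closer to the computation the paper does separately for Theorems~\ref{thm:max} and~\ref{thm:max_0}.

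There is, however, a genuine flaw in your upper-bound justification. You assert that $G_n(x,x)\le G(x,x)$ ``since the defining series for $G_n$ is effectively truncated to excursions killed at the boundary.'' That is not what the random walk representation says. The truncated series is $\overline{G}_n(x,x)=\E_x\bigl[\sum_{k=0}^{\tau_0-1}(k+1)\one_{[S_k=x]}\bigr]$, and one indeed has $\overline{G}_n\le G$; but $G_n = \overline{G}_n - E_n$ where $E_n$ is a boundary-excursion term with alternating signs in $j$, so $G_n$ is \emph{not} a truncation of $G$, and the sign of $E_n(x,x)$ is not manifest from the representation. The inequality $G_n(x,x)\le G(o,o)$ is true, but the paper obtains it by citing a domain-monotonicity result for the bilaplacian Green's function (Lemma~\ref{lem:G_(x,x)_upper_bound}, adapted from~\cite{BCK}), not from Vanderbei's series. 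If you want to keep your upper bound you should either invoke that monotonicity result, or prove $E_n(x,x)\ge 0$ separately, neither of which is immediate from the excursion representation alone.
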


In case of the finite volume field we show that the maximum field normalised to have variance one converges in distribution to the Gumbel distribution. We define $B_n:= b_n/\sqrt{G(o,o)}$ and $A_n:= B_n^{-1}$. 
\begin{theorem}\label{thm:max_0}
Let $\psi_x= \varphi_x/\sqrt{G_n(x,x)}$ for $x\in V_n$. Then for any $\theta\in\R$ and $m\ge 14$ we have that
\begin{align*}
\lim_{n\to\infty} \prob_n \left( \frac{\max_{x\in V_n} \psi_x - B_n}{A_n} \le \theta\right) = \exp(-\e^{-\theta}).
\end{align*}
\end{theorem}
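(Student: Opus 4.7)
The plan is to reduce the Gumbel convergence to a Poisson limit for the number of exceedances, handled via Berman's normal comparison inequality. Fix $\theta\in\R$, set $u_n := A_n\theta + B_n$, and let $W_n := \sum_{x\in V_n}\one\{\psi_x > u_n\}$. Since $\{\max_{x\in V_n}\psi_x \le u_n\} = \{W_n=0\}$, it suffices to show that $W_n$ converges in law to a Poisson random variable of mean $e^{-\theta}$. As each $\psi_x$ is standard Gaussian under $\prob_n$, the classical Mills-ratio asymptotics together with the defining formulas for $B_n, A_n$ give $\E_n[W_n] = N\,\bar\Phi(u_n) \to e^{-\theta}$, which takes care of the first moment.

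For the full limit I would apply Berman's inequality. Writing $\rho_n(x,y) := G_n(x,y)/\sqrt{G_n(x,x)\,G_n(y,y)}$, and assuming $m$ is large enough that $\delta := \sup_{n,\,x\neq y\in V_n}|\rho_n(x,y)| < 1$, the inequality yields
\begin{align*}
\Bigl|\prob_n\bigl(\max_{x\in V_n}\psi_x \le u_n\bigr) - \Phi(u_n)^N\Bigr| \;\le\; C_\delta\, S_n, \qquad S_n := \sum_{x\neq y\in V_n} |\rho_n(x,y)|\,\exp\!\left(-\tfrac{u_n^2}{1+|\rho_n(x,y)|}\right).
\end{align*}
Since $\Phi(u_n)^N = (1-\bar\Phi(u_n))^N \to \exp(-e^{-\theta})$ by the first-moment computation, it remains to prove $S_n \to 0$.

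The main analytical work goes into estimating $\rho_n(x,y)$ via the random walk representation behind Theorem~\ref{thm:inf_vol}. On $\T_m$ the simple random walk transition probabilities $\prob_x(S_j=y)$ vanish for $j<k:=d(x,y)$ and decay like $(m-1)^{-j}$ for $j\ge k$, so that $G_n(x,y)$ is controlled by $(m-1)^{-k}$ up to a polynomial factor in $k$. Combined with a uniform lower bound on the diagonal $G_n(x,x)$, one should obtain $|\rho_n(x,y)| \le C_m\, p(k)\,(m-1)^{-k}$ for some polynomial $p$. Partitioning pairs by $k$ using that there are at most $Nm(m-1)^{k-1}$ pairs at tree distance $k$, and expanding $\exp(-u_n^2/(1+\rho)) \approx N^{-2}(1+2\rho\log N)$ thanks to $u_n^2 = 2\log N + O(1)$, each shell contributes at most $O(N^{-1} p(k))$ to $S_n$, and summing over $k\le 2n$ yields $S_n = O(n^c/N)\to 0$.

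The hardest step is securing the pointwise bound $\sup_{x\neq y}|\rho_n(x,y)|\le\delta<1$ uniformly in $n$ and in the location of $x,y$ within $V_n$. The diagonal $G_n(x,x)$ is smallest when $x$ approaches the boundary of $V_n$, so pairs of vertices in the outermost shells of $V_n$ are the ones at risk of carrying normalized covariance close to $1$; these are the pairs that force the restriction to large $m$. The key estimate will show, using Vanderbei's representation together with the explicit infinite-volume value $G(o,o)=\frac{(m-1)((m-1)^2+1)}{(m-2)^3}$, that for $m$ large the depression of $G_n(x,x)$ near the boundary is mild compared to the off-diagonal decay of $G_n(x,y)$, so that a uniform $\delta<1$ can indeed be arranged and the Berman input is legitimate.
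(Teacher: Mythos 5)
Your overall plan matches the paper's: reduce to a Poisson limit for the exceedance count, compute the first moment via Mills' ratio, and then control the covariance-sum with a normal-comparison inequality (the paper uses the bounds of Theorem 3.1 and Lemma 3.4 of~\cite{hj90}, which plays the role of Berman's inequality in your sketch), splitting the sum over pairs according to $d(x,y)$. You also correctly diagnose the two key inputs that must be secured: (i) a uniform lower bound on the diagonal $G_n(x,x)$, and (ii) a uniform bound $\sup_{x\neq y}|\rho_n(x,y)|\le\delta<1$ together with exponential-type decay of $\rho_n(x,y)$ in $d(x,y)$, with the boundary shells being the danger zone and the reason $m$ must be large. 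All of this is present in the paper (cf.\ eq.~\eqref{eq:var_lower_bound_0} and eq.~\eqref{eq:R_n bound2}).

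The genuine gap is that the crucial quantitative estimate is asserted rather than proved, and its heuristic justification glosses over the hardest part. You argue that $G_n(x,y)$ ``is controlled by $(m-1)^{-k}$ up to a polynomial factor in $k$'' because $p_j(x,y)=0$ for $j<k=d(x,y)$. This reasoning is sound for the local-time part $\overline G_n(x,y)=\E_x[\sum_{j<\tau_0}(j+1)\one_{[S_j=y]}]$, which is dominated by the infinite-volume $G(x,y)$ and indeed decays like $(d+1)(m-1)^{-d}$ by Lemma~\ref{lem:G}. But $G_n=\overline G_n - E_n$, and the excursion series $E_n(x,y)$ does \emph{not} inherit this decay from the naive bound: the crude estimate~\eqref{eq:E_n_bound} controls $E_n(x,y)$ only through $d(x,\partial_1V_n)$ and $d(y,\partial_1V_n)$, with no dependence on $d(x,y)$ at all, so for pairs near the boundary it gives nothing useful in $k$. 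Extracting decay in $d(x,y)$ from the alternating excursion series is exactly where the paper invests its technical effort: Lemma~\ref{lem:E_n_finer} bounds each excursion term $a_j$ two ways (by boundary distance and, separately, by $d(x,y)$ via moments $\sum_\ell \ell^{2j+1}p_\ell(x,y)$, which requires bounding high derivatives of $\Gamma(x,y|\cdot)$ at $1$) and then optimizes a truncation parameter $J_0$. The resulting correction factor is subexponential but \emph{not} polynomial in $d$ (it contains $3^{4(\log d)^2}$-type terms), which is why the paper introduces the quantity $B_k$, proves it is eventually decreasing (Claim~\ref{claim:B_k}), and verifies $nB_{\lfloor n\eps\rfloor}\to0$ by hand rather than simply invoking a polynomial bound. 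Your claimed polynomial factor is too optimistic as stated, though, to be fair, any subexponential factor suffices for $S_n\to0$; the real issue is that none of this has been established. Without Lemma~\ref{lem:E_n_finer} (or an equivalent), the bound $|\rho_n(x,y)|\le C_m\,p(k)(m-1)^{-k}$ is unsupported and the argument does not close.
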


\begin{remark}
In exposing our results we keep all the constants depending on $m$ explicit. We emphasize that the bound $m\ge 14$ need not be optimum as the constants are not known to be sharp.
\end{remark}

\subsection{Discussion}
\begin{itemize}[leftmargin=*] 
\item Our result is heuristically motivated by the fast decay of correlations of the DGFF on a tree. As we shall see in Lemma~\ref{lem:G} correlations decay exponentially in the distance between points, which suggests a strong decoupling and a behavior of the rescaled maximum similar to that of independent and identically distributed Gaussians. 
\item In Theorem~\ref{thm:max}, the scaling constants show that the correlation structure can be ignored for the extremes and the behaviour is similar to that of i.i.d.~centered Gaussian random variables with variance $G(o,o)$. In the finite volume case, we rescaled the field to have variance one and hence the behaviour remains the same as that of the i.i.d.~case. 
An interesting open problem is whether this behavior is retained for the finite volume field divided by $\sqrt{G(o,\,o)}$. This convergence is by no means trivial to obtain as the finite volume variance convergence to $G(\cdot,\,\cdot)$ is not uniform, in particular the error depends on the distance to the leaves. Moreover, since the size of the boundary of a finite tree is non-negligible with respect to the total size we cannot claim that extremes are achieved in the bulk and not near the boundary. 

\item For scaling extremes we use a comparison theorem of \cite{hj90} that is based on Stein's method. There are many different approaches to the question of convergence of extremes using Stein's method, one notable instance being~\cite{AGG}. Compared to their method the advantage of~\cite{hj90} is that it does not require to control the conditional expectation of the field that emerges from the spatial Markov property. While for other interfaces, like the DGFF, the harmonic extension has a closed form in terms of random walk probabilities, the biharmonic extension is more subtle to bound, and~\cite{hj90} allow one to bypass this step.
\item The main contribution of the article is the analysis of the covariance structure for a membrane model on the tree. We are not aware of any prior work which deals with the bilaplacian model on graphs beyond $\Z^d$, whilst there is an extensive literature on the discrete Gaussian free field on general graphs. We exploit the representation of the solution of a biharmonic Dirichlet problem in terms of the random walk on the graph. In the bulk the behaviour is easy to derive and is close to that of $$\overline{G}_n(x,y):= \E_x\left[\sum\limits_{k=0}^{\tau_{0}-1} (k+1)\one_{[S_k=y]} \right],$$
where $\tau_0$ is the first exit time from a bounded subgraph (see Section~\ref{section:rwrep}). Around the boundary additional effects arising out of boundary excursions kick in, in particular we will use the successive excursion times of the random walk and the local time of the random walk between two consecutive visits to the complement of a set. Control of such observables on general graphs will open up avenues for further interesting studies in the area of random interfaces. 

\item Although we consider regular trees we believe that our results can be extended to rooted trees where the same scaling limit for the maximum will hold. The case of Galton--Watson tree will be more challenging due to the randomness of the offspring distribution, but would be an intriguing direction to extend the study of random interfaces to random graphs.

\end{itemize}

\paragraph{Structure of the article}

In Section~\ref{section:rwrep} we recall the random walk representation for the solution of the biharmonic Dirichlet problem for a general graph and also rewrite the formula in our set-up. In Section~\ref{section:proofthm:inf_vol} we show that the infinite volume membrane measure exists and provide a proof of Theorem~\ref{thm:inf_vol}. In Section~\ref{section:thm:max} we prove Theorem~\ref{thm:max} providing a limit for the expected maximum under the finite volume measure. In Section~\ref{section:proofthm:exp_max_0} we use the estimates to determine the fluctuations of the extremes in the infinite volume and prove Theorem \ref{thm:exp_max_0}. In Section~\ref{section:thm:max_0} we show the fluctuations of the maximum under the finite volume measure. Section \ref{section:lem:E_n_finer} is devoted to the proof of Lemma~\ref{lem:E_n_finer} which is related to finer estimates on the covariance of the model. \par

\paragraph{{Notation}} In the following $C$ is a generic constant which may depend on $m$ and may change in each appearance within the same equation.
\section{A random walk representation for the covariance function}\label{section:rwrep}
In this section we shall revisit the random walk representation for the covariance function $G_n$. From the definition of the model it follows that $G_n$ satisfies the following Dirichlet problem: for $x\in V_n$
\begin{equation}\label{eq:G_n}
\begin{cases}
\Delta^2 G_n(x,y) =\hspace{5pt}\delta_x(y), & \text{if } y\in V_n\\
G_n(x,y) = \hspace{5pt}0, & \text{if }y\in V_n^c.
\end{cases}
\end{equation}
If one considers the Dirichlet problem above but with $-\Delta$ replacing $\Delta^2$ then the solution is the well-known expected local time of the simple random walk on the graph~\cite[Chapter 1]{ASS}. In our set-up such a general easy formulation is not available. In particular, to the best of the authors' knowledge one cannot relate the covariance of the MM to a stochastic process. The solution is then given by a weighted local times and an expression involving the boundary excursion times of the random walk. The boundary effects are more profound in the membrane model and this is  documented in the existing works on $\Z^d$ (\cite{Kurt_d4, mm_scaling, Mueller:Sch:2017, schweiger:2019}).
\subsection{Intermezzo: random walk representation on general graphs}
In this subsection we discuss the probabilistic solution of the Dirichlet problem for the discrete biharmonic operator obtained by \cite{vanderbei}, whose set-up is much more general in that it considers general graphs and not only trees. We recall it here for completeness. Let $\mathcal G$ be a connected graph and let $\Lambda\subset\mathcal G$ be a finite subgraph.  With a slight abuse we will confound the graph $\mathcal G$ resp. $\Lambda$ with its vertex set, but this should not cause any confusion. Let $\rho$ be a strictly positive measure on the discrete state space $\mathcal G$ and for all $x,y\in \mathcal G$, $ q(x,y)$ be a positive symmetric transition function such that
\begin{equation*}\label{q property}
\sum\limits_{y\in \mathcal G}q(x,y)\rho(y) = 1.
\end{equation*}
Let $P=(p(x,y))_{x,y\in\mathcal G}$ be a transition matrix such that
\begin{equation*}\label{p definition}
p(x,y)=q(x,y)\rho(y).
\end{equation*}
Let $(S_k)_{k\ge 1}$ be a random walk on $\mathcal G$, defined on a probability space $(\Omega,\mathcal{F})$, with transition matrix $P$ making the random walk symmetric. Now the Laplacian operator acting on a function $f:\mathcal G\to \R$ is defined as
\begin{align*}
(\Delta f)(x) = \sum\limits_{y\sim x} p(x,y)(f(y)-f(x)).
\end{align*}
The one-step {transition operator} $P$ is defined as
\begin{align*}\label{transition operator}
(Pf)(x)= \E_x[f(S_1)] = \sum\limits_{y\sim x} p(x,y)f(y).
\end{align*}
Then
\begin{equation*}\label{Laplacian}
\Delta = P-I,
\end{equation*}
where $I$ is the identity operator.
We say that $f$ is a solution to the non-homogeneous Dirichlet problem for the bilaplacian if $f$ satisfies the following: 
\begin{equation}\label{eq:nh_bilap_prob}
\begin{cases}
\Delta^2f(y) = \hspace{5pt}\psi(y), & \text{if } y\in \Lambda\\
f(y) = \hspace{5pt}\phi(y), & \text{if }y\in \partial_2 \Lambda,
\end{cases}
\end{equation}
where $\partial_k\Lambda$ is defined by
\begin{align}
\partial_k\Lambda:= \{z\in\Lambda^{c}: d(z,\Lambda) \le k\},\hspace{10pt} k\ge 1\end{align}
and where $\psi,\,\phi$ are graph functions representing the input datum resp. boundary datum. We want to obtain a probabilistic solution of the problem~\eqref{eq:nh_bilap_prob}. We define $\tau_i$ to be the $(i+1)$-th visit time to $\Lambda^c$ by the random walk $S_k$. Formally,
\begin{equation}\label{boundary_hitting_time}
\tau_i:=\inf\{ k>\tau_{i-1}:S_k\in \Lambda^c \},\hspace{10pt}\tau_{-1}:=-1.
\end{equation}
Note that $\tau_0$ is the first exit time from $\Lambda$. We will keep two assumptions throughout the Section:
\begin{enumerate}[label=(\roman{*}), ref=(\roman{*})]
	\item\label{(i)} $\Lambda\cup\partial_1\Lambda$ is finite;
	\item\label{(ii)} $\E_x\left[\tau_0^2\right]<\infty$ for all $x\in \mathcal G$.
\end{enumerate}

Let $$L^2(\mathcal G,\rho)=\left\{ f\left| \hspace{3pt} f:\mathcal G\to\R \hspace{3pt}\text{such that}\hspace{3pt} \sum\limits_{x\in \mathcal G}f(x)^2\rho(x)<\infty \right.\right\}$$ and the inner-product in $L^2(\mathcal G,\rho)$ be defined as follows: for $f,g\in L^2(\mathcal G,\rho)$
\[
\la f,g \ra_{\mathcal G} := \sum\limits_{x\in \mathcal G}f(x)g(x)\rho(x).
\]
One can show that $P$ is a self-adjoint operator on $L^2(\mathcal G,\rho)$. Hence $\Delta$ is also self-adjoint.
We define
\[
\begin{cases}
M_{-1}:=1 & \\
M_j := \prod\limits_{i=0}^j (\tau_i-\tau_{i-1}-1), & j\ge 0.
\end{cases}
\]
Next we define an operator acting on $\mathcal{A}=\left\{ f\left| f:\Lambda^c\to\R  \right.\right\}$. The operator $Q$ acting on $\mathcal{A}$ is defined as 
\begin{equation}\label{Q operator definition}
(Qf)(x) = \E_x\left[(\tau_1-1)f\left(S_{\tau_1}\right)\right], \hspace{5pt} x\in \Lambda^c.
\end{equation}
Observe that, if $x\in \left( \Lambda\cup\partial_1\Lambda \right)^c$, then $Qf(x)=0$ for all $f\in\mathcal{A}$. Therefore 
\begin{equation*}\label{range of Q}
\text{Range}(Q) \subset \left\{ g\left| g:\Lambda^c\to\R \hspace{5pt}\text{and}\hspace{5pt} g(x)=0,\forall x\notin \partial_1\Lambda  \right.\right\}\subset \mathcal{A}.
\end{equation*}
Since $\partial_1\Lambda$ is finite,  one can show with the help of~\ref{(ii)} that $(Qf)(x)$ is bounded. It can be shown that the operator $Q$ is positive semi-definite on $L^2(\Lambda^c,\rho)$ (see~\cite{vanderbei}). Therefore $Q$ can be diagonalized and can be written as
\begin{equation*}\label{spectral decomposition of Q}
Q = \sum\limits_{\lambda} \lambda\Pi_\lambda
\end{equation*}
where the sum is over all the eigenvalues of $Q$ and $\Pi_\lambda$  are the projection operators onto the eigenspace corresponding to the eigenvalue $\lambda$. Observing the range of the Q-operator in \eqref{range of Q} and the fact that $\partial_1\Lambda$ is finite, we can say that the operator $Q$ is compact and $\text{Range}(Q)$ is finite-dimensional. Therefore, the spectrum of $Q$ is finite. Also, as $Q$ is positive semi-definite, we conclude that all the eigenvalues are non-negative. A probabilistic solution of the problem~\eqref{eq:nh_bilap_prob} is given in~\citet{vanderbei}.
\begin{theorem}[{\citet[{Theorem 4}]{vanderbei}}]\label{thm:the solution}
	Let $(\eta_t)_{t\ge 0}$ be a Poisson process with parameter $1$ which is independent of the random walk $(S_k)$. Then the solution of~\eqref{eq:nh_bilap_prob} is given by
	\begin{equation}\label{eq:nh_solution}
	f(x) = \lim\limits_{t\to\infty} \E_x\left[ \sum\limits_{j=0}^{\eta_t} (-1)^j M_{j-1}\left[ (\tau_j-\tau_{j-1})\phi(S_{\tau_j})+ \sum\limits_{k=\tau_{j-1}}^{\tau_j-1}(k-\tau_{j-1})\psi(S_k) \right] \right].
	\end{equation}
Alternatively, the above solution can be written in terms of the eigenvalues of $Q$ and the corresponding projection operators as follows
\begin{align}\label{eq:nh_solution_2}
f(x)&= \E_x\left[\phi\left(S_{\tau_0}\right)\right] + \sum\limits_{\lambda} \frac{1}{1+\lambda} \E_x\left[ \tau_0\Pi_\lambda(I- \widetilde P)\phi(S_{\tau_0}) \right]\nonumber\\
&\qquad\qquad +\E_x\left[ \sum\limits_{k=0}^{\tau_0-1}(k+1)\psi(S_k) \right] -\sum\limits_{\lambda}\frac{1}{1+\lambda}\E_x \left[ \tau_0\Pi_\lambda h\left(S_{\tau_{0}}\right) \right],
\end{align}
where $\widetilde Pf(z):=\E_z[f(S_{\tau_1})]$ is the operator acting on functions defined on $\partial_1\Lambda$ and
\begin{align*}
h(z) = \E_z\left[ \sum\limits_{k=0}^{\tau_1-1}k\psi(S_k) \right],\quad z\in\Lambda^c.
\end{align*}
\end{theorem}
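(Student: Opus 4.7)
The plan is to show the proposed formula solves the Dirichlet problem~\eqref{eq:nh_bilap_prob} and invoke uniqueness. Uniqueness is immediate from the positive-definiteness of $\Delta^2$ on functions supported in $\Lambda\cup\partial_2\Lambda$, which follows by the same argument as in Lemma~\ref{lem:model_def} (now with respect to the measure $\rho$). For the boundary condition, take $x\in\partial_2\Lambda\subset\Lambda^c$. Then $\tau_0=0$, so for every $j\ge 1$ the product $M_{j-1}$ contains the factor $\tau_0-\tau_{-1}-1=0$ and all those terms vanish. Only the $j=0$ summand survives in~\eqref{eq:nh_solution}, and with $\tau_{-1}=-1$ it reduces to $\phi(x)$.

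The heart of the proof is the PDE $\Delta^2 f=\psi$ on $\Lambda$. Using $\Delta=P-I$, one has $\Delta^2 f(x)=\E_x[f(S_2)]-2\E_x[f(S_1)]+f(x)$. I would apply the strong Markov property at times $1$ and $2$ to the Poisson representation~\eqref{eq:nh_solution}: restarting from $S_1$ or $S_2$ shifts every $\tau_j$ downward and, depending on whether the first one or two steps fall in $\Lambda$ or $\Lambda^c$, reindexes the sum $j\mapsto j-1$. The alternating signs $(-1)^j$ and the telescoping product $M_{j-1}=\prod_{i=0}^{j-1}(\tau_i-\tau_{i-1}-1)$ are engineered so that, in the combination $\E_x[f(S_2)]-2\E_x[f(S_1)]+f(x)$, every contribution with $j\ge 1$ cancels in pairs and only a single $j=0$ term survives, which equals $\psi(x)$. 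The Poisson cutoff $\eta_t$ is the regulariser that makes the alternating sum absolutely integrable during the computation; assumption~\ref{(ii)} together with the exponential tails of $\eta_t$ justifies interchanging expectation and limit and passing to $t\to\infty$.

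To derive the spectral form~\eqref{eq:nh_solution_2}, I would split~\eqref{eq:nh_solution} at $j=0$ and apply the strong Markov property at $\tau_0$ to the tail $j\ge 1$. The tail factorises as $\E_x[\tau_0\cdot F(S_{\tau_0})]$ for a function $F$ on $\partial_1\Lambda$ given by an alternating Neumann series in the operator $Q$ of~\eqref{Q operator definition}. Since $Q$ is compact, finite-rank (its range is supported on the finite set $\partial_1\Lambda$) and positive semi-definite, it diagonalises with finite spectrum and the series sums to $(I+Q)^{-1}$ applied to a combination of $\phi$, $\widetilde P\phi$ and $h$; expanding $(I+Q)^{-1}=\sum_\lambda(1+\lambda)^{-1}\Pi_\lambda$ and regrouping reproduces the four terms of~\eqref{eq:nh_solution_2}.

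The main obstacle is the combinatorial bookkeeping in the PDE step: tracking how the random times $\tau_j$ and the products $M_{j-1}$ transform under the Markov shifts while simultaneously handling the alternating signs is delicate. A cleaner alternative I would pursue if the direct calculation becomes unwieldy is to work with the spectral form first: recognise~\eqref{eq:nh_solution_2} as the explicit resolvent inversion of a fixed-point equation for $g=(\Delta f)|_{\partial_1\Lambda}$ built from $Q$, verify $\Delta^2 f=\psi$ directly from this form using the standard random walk Green identity for $\Delta$, and then recover~\eqref{eq:nh_solution} by re-expanding the resolvent as a Poisson-regularised alternating Neumann series.
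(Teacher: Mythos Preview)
The paper does not supply its own proof of this statement: it is quoted verbatim as \citet[Theorem~3]{vanderbei}, and the remark immediately following the theorem notes that the spectral form~\eqref{eq:nh_solution_2} can be read off from the proof of Theorem~4 in that reference. There is therefore no in-paper argument to compare your proposal against; the paper treats the result as a black box and only uses the two formulas~\eqref{eq:nh_solution} and~\eqref{eq:nh_solution_2} downstream.

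On the proposal itself: your boundary-condition check is correct (for $x\in\Lambda^c$ one has $\tau_0=0$, hence $M_{j-1}$ contains the factor $\tau_0-\tau_{-1}-1=0$ for every $j\ge1$, and the surviving $j=0$ term is $\phi(x)$), and your derivation of~\eqref{eq:nh_solution_2} from~\eqref{eq:nh_solution} by strong Markov at $\tau_0$ and summation of the alternating Neumann series in $Q$ to $(I+Q)^{-1}=\sum_\lambda(1+\lambda)^{-1}\Pi_\lambda$ is the right mechanism. The genuine gap is the PDE step $\Delta^2 f=\psi$ on $\Lambda$: the phrase ``every contribution with $j\ge1$ cancels in pairs'' is exactly the content that needs to be established, and the bookkeeping of how the shifts by one and two steps interact with the reindexing $j\mapsto j-1$ and the factor $(\tau_0-\tau_{-1}-1)$ is not carried out. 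Your suggested alternative --- first set up a fixed-point equation on $\partial_1\Lambda$ for $g=(\Delta f)|_{\partial_1\Lambda}$, invert via $(I+Q)^{-1}$ to obtain~\eqref{eq:nh_solution_2}, and only then recover~\eqref{eq:nh_solution} by re-expanding the resolvent --- is the more tractable route and is closer in spirit to how Vanderbei actually organises the argument.
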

Note that~\eqref{eq:nh_solution_2} is a re-writing of the solution~\eqref{eq:nh_solution} which is a by-product of Vanderbei's proof.
\begin{remark}
 Without the presence of $\eta$ the series describing the covariances might not be absolutely summable on every graph, as discussed in an example in~\cite{vanderbei}. However in our case, that is for regular trees where we have exponential decay of correlations for $\vr$, one can show that $\eta_t$ does not play any role and can in fact be avoided altogether. Also note that since $\{\tau_i-\tau_{i-1}:\, i\ge 1\}$ need not be i.i.d. the terms involving excursion times to $\Lambda^c$ in~\eqref{eq:nh_solution} must be dealt with care. 

We also note that the representation \eqref{eq:nh_solution_2} is not directly stated as a theorem in \cite{vanderbei} but if one goes through the proof of Theorem 4 in \cite{vanderbei} then it follows immediately.
\end{remark}

\subsection{Back to regular trees}
 In our set-up, $V_n$ consists in the first $n$ generations of the regular tree. Note that $V_n \cup \partial_1 V_n$ is finite. It follows from Lemma~\ref{lem:bound_tau^2} that $\E_x[\tau_0^2]<\infty$ for all $x\in \mathcal \T_m$ with $m\ge 3$,  so that \ref{(i)}-\ref{(ii)} are satisfied. It can be easily proved using the theory of electrical networks that the simple random walk on $\T_m$ is transient for all $m\ge 3$. 
Using the solution~\eqref{eq:nh_solution}  we have the random walk representation of $G_n(x,y)$ as follows:
\begin{equation}
G_n(x,y) = \lim\limits_{t\to\infty}\E_x\left[ \sum\limits_{j=0}^{\eta_t}(-1)^{j}M_{j-1}\sum\limits_{k=\tau_{j-1}}^{\tau_j-1}(k-\tau_{j-1})\one_{[S_k=y]} \right].
\end{equation}
We have used $\phi(z)=0$ and $\psi(z)=\one_{[z=y]}$ in equation~\eqref{eq:nh_solution}. 

We write $G_n(x,y)$ as
\begin{equation}\label{eq:G_n split}
G_n(x,y) = \overline{G}_n(x,y) - E_n(x,y),
\end{equation}
where 
\begin{align}
&\overline{G}_n(x,y):= \E_x\left[\sum\limits_{k=0}^{\tau_{0}-1} (k+1)\one_{[S_k=y]} \right],\nonumber\\
&E_n(x,y):=\lim\limits_{t\to\infty}\E_x\left[ \sum\limits_{j=1}^{\eta_t}(-1)^{j-1}M_{j-1}\sum\limits_{k=\tau_{j-1}}^{\tau_j-1}(k-\tau_{j-1})\one_{[S_k=y]} \right]\label{error:rep}.
\end{align}

Note that $\overline{G}_n(x,y)= \E_{x,y}\left[ \sum_{k=0}^{\tau_{0}-1}\sum_{\ell=0}^{\tau^\prime_{0}-1}  \one_{[S_k= S^\prime_\ell]}\right]$ where $S_k$ and $S^\prime_\ell$ are two independent simple random walks starting from $x$ and $y$ respectively, and $\tau_0$ and $\tau_0^\prime$ are their first visit times to $V_n^c$ respectively. $\overline{G}_n(x,y)$ plays crucial role in the study of the membrane model: in the $\Z^d$ case, it was shown in \cite{Kurt_thesis} that $G_n$ and $\overline{G}_n$ are close in the bulk of the domain. We will also see here that $E_n(x,y)$ plays a role of the error term.  We observe from~\eqref{eq:G_n split} and~\eqref{eq:nh_solution_2} that
\begin{equation}\label{error:2}
E_n(x,y)= \E_x\left[ \tau_0 \sum_{\lambda} \frac{1}{1+\lambda}\Pi_\lambda h(S_{\tau_0})\right],
\end{equation}
where $h(z)=\E_z\left[\sum\limits_{k=0}^{\tau_{1}-1} k\one_{[S_k=y]} \right]$ for $z\in V_n^c$.

\section{Proof of Theorem~\ref{thm:inf_vol}}\label{section:proofthm:inf_vol}
If the infinite volume limit exists then it is supposed to have the covariance function $G$. We first show $G(x,y)= \sum_{k=0}^\infty (k+1) \prob_x(S_k=y)$ can be computed in terms of $d(x,y)$ for a $m$-regular tree and that it has exponential decay in the distance $d(x,y)$. 
\begin{lemma}\label{lem:G}
We have for any $x, y\in \T_m$ that
\begin{align*}
G(x,y)= \frac{(d(x,y)+1) m(m-1)(m-2) +2(m-1)}{(m-2)^3(m-1)^{d(x,y)}}.
\end{align*}
\end{lemma}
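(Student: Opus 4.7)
The plan is to exploit the fact that $G(x,\cdot)$ satisfies the biharmonic equation on the whole tree, reduce the problem to a one-dimensional recurrence using the spherical symmetry of $\T_m$ about $x$, and solve it explicitly.

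First, I would verify the operator identity $\Delta^2 G(x,\cdot)=\delta_x(\cdot)$. Writing $G = \sum_{k\ge 0}(k+1)P^k$, where $P$ is the SRW transition operator, a telescoping computation gives $\Delta G = (P-I)\sum_k (k+1)P^k = -\sum_k P^k = -U$, and therefore $\Delta^2 G = -(P-I)U = I$. This is the discrete biharmonic analogue of the usual characterisation $-\Delta U = I$ for the Green's function.

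Next, by vertex-transitivity of $\T_m$, $G(x,y)$ depends only on $d := d(x,y)$; call this value $u_d$. For any $y$ with $d\ge 1$, exactly one neighbour lies at distance $d-1$ and $m-1$ neighbours at distance $d+1$, so
\[
v_d := \Delta u(y) = \frac{u_{d-1}+(m-1)u_{d+1}}{m} - u_d,
\]
while at $x$ itself all neighbours are at distance $1$, giving $v_0 = u_1 - u_0$. Applying $\Delta$ a second time and using $\Delta^2 G(x,y) = \delta_x(y)$ yields the recurrence $(m-1)v_{d+1}-mv_d+v_{d-1}=0$ for $d\ge 1$, together with the boundary identity $v_1 - v_0 = 1$ coming from the delta at $x$. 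The characteristic polynomial $(m-1)r^2-mr+1$ has roots $1$ and $(m-1)^{-1}$; transience of the walk (so $u_d\to 0$) forces $v_d=C(m-1)^{-d}$, and $v_1-v_0=1$ gives $C=-(m-1)/(m-2)$, i.e. $v_d = -1/[(m-2)(m-1)^{d-1}]$.

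Finally, I solve $\Delta u_d = v_d$. The homogeneous equation has solutions $1$ and $(m-1)^{-d}$; since $v_d$ is proportional to $(m-1)^{-d}$ (resonance), I look for a particular solution of the form $u_d^{(p)}=\alpha d(m-1)^{-d}$. A direct substitution (using $\tfrac{1}{m}[(d-1)(m-1)+(d+1)] - d = (2-m)/m$ up to the common factor) fixes $\alpha = m(m-1)/(m-2)^2$. Discarding the constant homogeneous solution to preserve decay and using $v_0 = u_1-u_0$ to pin down the remaining coefficient $B$ of $(m-1)^{-d}$, one finds $B=(m-1)((m-1)^2+1)/(m-2)^3$, which consistently reproduces the stated value of $G(o,o)=u_0$. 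Placing everything over the common denominator $(m-2)^3(m-1)^d$ and using the simplification
\[
(m-1)^2+1 + md(m-2) = (d+1)m(m-2)+2
\]
delivers the closed form.

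The only real obstacle is the resonance in Step 4 (which forces the extra factor $d$ in the particular solution) and, relatedly, choosing the decaying branch of the homogeneous solution. I note in passing that my simplification yields the numerator $(d+1)m(m-1)(m-2)+2(m-1)$ with a \emph{plus} sign before the $2(m-1)$, which at $d=0$ matches the value $(m-1)((m-1)^2+1)/(m-2)^3$ asserted immediately after Theorem \ref{thm:inf_vol}; this suggests the ``$-$'' in the statement of Lemma \ref{lem:G} is a typo for ``$+$''.
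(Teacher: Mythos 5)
Your proof is correct but takes a genuinely different route from the paper. The paper quotes the closed-form generating function $\Gamma(x,y|\z)$ from Woess, observes $G(x,y)=g'(1)+g(1)$, and extracts the answer by logarithmic differentiation; you instead never touch the generating function, but recast $G(x,\cdot)$ as the unique spherically symmetric decaying solution of the discrete biharmonic equation $\Delta^2 G(x,\cdot)=\delta_x$ and solve the resulting second-order constant-coefficient recurrences in two stages (first for $v_d=\Delta u_d$, then for $u_d$, handling the resonance $(m-1)^{-d}$ on the right-hand side by the usual $d(m-1)^{-d}$ ansatz). Your approach is more self-contained --- it rederives rather than cites the Green's function of the SRW --- at the cost of some recurrence bookkeeping; the paper's is shorter given Woess's formula. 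One thing to tighten: you invoke ``transience of the walk (so $u_d\to 0$)'' to discard the constant branch; it is cleaner to note directly that $v_d=-U(x,y)$ where $U$ is the ordinary Green's function, which decays like $(m-1)^{-d}$ on a tree (e.g.\ because the hitting probability of a vertex at distance $d$ is $(m-1)^{-d}$), and that the resulting $u_d$ is bounded, so the constant homogeneous part must vanish. Finally, your observation is right: the sign in the displayed formula of Lemma~\ref{lem:G} is a typo, and should read $+2(m-1)$; this is what the paper's own proof produces (it gives $G(x,y)=\frac{(d+1)m(m-2)+2}{(m-2)^3(m-1)^{d-1}}$), and it is the only sign consistent with the stated value of $G(o,o)=(m-1)\bigl((m-1)^2+1\bigr)/(m-2)^3$.
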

\begin{proof}
We define the Green's function of the simple random walk on $\T_m$ as the power series
\begin{align*}
\Gamma(x,y|\z):=\sum_{k=0}^\infty \prob_x(S_k=y)\z^k,\,\,x,\,y\in\T_m,\quad\z\in\mathbb{C}.
\end{align*}
From \citet[Lemma 1.24]{Woess:book} we have
\begin{align}\label{eq:lem_1.24}
\Gamma(x,y|\z) = \frac{2(m-1)}{m-2+\sqrt{m^2-4(m-1)\z^2}}\left( \frac{m-\sqrt{m^2-4(m-1)\z^2}}{2(m-1)\z}\right)^{d(x,y)}.
\end{align}
We fix $x,y\in\T_m$ and write $d=d(x,y)$, $g(\z):=\Gamma(x,y|\z)$ for $\z\in\mathbb{C}$.
Now observe that
\begin{align*}
G(x,y)= g'(1) + g(1).
\end{align*}
From~\eqref{eq:lem_1.24} we get
\begin{align*}
&\log(g(\z))= \log(2(m-1))-\log\left(m-2+\sqrt{m^2-4(m-1)\z^2}\right) \\
&\qquad\qquad\qquad\qquad+ d\log\left(m-\sqrt{m^2-4(m-1)\z^2}\right)
-d\log(2(m-1)) -d\log \z.
\end{align*}
So taking a derivative we have
\begin{align*}
&\frac{g{'}(\z)}{g(\z)}= \frac{8(m-1)\z}{2\left(m-2+\sqrt{m^2-4(m-1)\z^2}\right)\sqrt{m^2-4(m-1)\z^2}}\\
&\qquad\qquad\qquad\qquad\qquad\qquad + \frac{8d(m-1)\z}{2\left(m-\sqrt{m^2-4(m-1)\z^2}\right)\sqrt{m^2-4(m-1)\z^2}} - \frac{d}{\z}
\end{align*}
and hence evaluation at $\z=1$ gives
\[
\frac{g{'}(1)}{g(1)}= \frac{2(m-1)}{(m-2)^2} + \frac{2d(m-1)}{m-2}-d= \frac{2(m-1)+dm(m-2)}{(m-2)^2}.
\]
Also 
\begin{align}
g(1)= \frac{1}{(m-2)(m-1)^{d-1}}.\label{greensrw:tree}
\end{align}
Now we obtain
\begin{align*}
G(x,y)&= g{'}(1) + g(1)
=g(1)\left(\frac{g{'}(1)}{g(1)}+1\right)\\
&=\frac{2(m-1)+dm(m-2)+(m-2)^2}{(m-2)^3(m-1)^{d-1}}=\frac{(d+1)m(m-2)+2}{(m-2)^3(m-1)^{d-1}}.
\end{align*}
\end{proof}
The behavior of $G$ depends crucially on the graph distance $d(x,y)$ between two points $x$ and $y$ on the tree. We would need an estimate on the number of points $(x,y)$ which are at a fixed distance $k$. The following lemma gives a bound on this.
\begin{lemma}\label{lem:C_k}
	Let $$C_k:=|\{(x,y)\in V_n\times V_n: d(x,y)=k\}|.$$
Then 
$$C_k \le C (m-1)^{n+\floor{\frac{k}{2}}},$$
where $C$ is a constant which depends on $m$.		
\end{lemma}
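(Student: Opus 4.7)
The plan is to parametrise each pair $(x,y)$ with $d(x,y)=k$ by the unique highest vertex on the $x$--$y$ path, namely the most recent common ancestor $z$ of $x$ and $y$ in the rooted tree $\T_m$. Setting $j_1:=d(x,z)$ and $j_2:=d(y,z)$ one has $j_1+j_2=k$; writing $\ell:=d(o,x)$ for the depth of $x$, the vertex $z$ has depth $\ell-j_1$ and $y$ has depth $\ell+k-2j_1$. The constraint $y\in V_n$ therefore forces $j_1\ge\lceil(\ell+k-n)/2\rceil$ whenever $\ell+k>n$, and this is precisely the ingredient that produces the $\lfloor k/2\rfloor$ (rather than $k$) in the final bound.

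First I would fix $x$ at depth $\ell$ and count the admissible $y\in V_n$. Given $j_1$, the ancestor $z$ is uniquely determined, and the candidates for $y$ are the descendants of $z$ at distance $k-j_1$ lying in a subtree rooted at a child of $z$ different from the one leading to $x$; the number of such descendants is at most $(m-1)^{k-j_1}$, using that every non-root vertex of $\T_m$ has $m-1$ children while $o$ has $m$. Summing the resulting geometric series over $j_1\in\{j_1^{\min},\dots,\min(\ell,k)\}$, with $j_1^{\min}:=\max(0,\lceil(\ell+k-n)/2\rceil)$, and noting that the sum is dominated by its largest term, yields
\[
\bigl|\{y\in V_n:\,d(x,y)=k\}\bigr|\le C(m-1)^{k-j_1^{\min}}.
\]

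Next I would multiply by the number of vertices of $V_n$ at depth $\ell$, which is at most $C(m-1)^\ell$, and sum over $\ell\in\{0,\dots,n\}$. Splitting according to whether $\ell+k\le n$ (the \emph{bulk} regime, where $j_1^{\min}=0$) or $\ell+k>n$ (the \emph{near-boundary} regime, where $j_1^{\min}=\lceil(\ell+k-n)/2\rceil$) produces two contributions. The bulk contributes $\sum_{\ell=0}^{n-k}C(m-1)^{\ell+k}\le C(m-1)^n$, while the near-boundary contribution is
\[
\sum_{\ell=n-k+1}^{n} C(m-1)^{\ell+(n+k-\ell)/2}=C\sum_{\ell=n-k+1}^n(m-1)^{(n+k+\ell)/2},
\]
a geometric series in $(m-1)^{1/2}$ dominated by its top term at $\ell=n$, namely $(m-1)^{n+k/2}$. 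Combining the two contributions and absorbing the $(m-1)^{1/2}$ factor arising for odd $k$ into the constant $C$ gives $C_k\le C(m-1)^{n+\lfloor k/2\rfloor}$ as required.

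The main obstacle is the near-boundary bookkeeping: in the bulk the naive estimate $|\{y\in V_n:d(x,y)=k\}|\le C(m-1)^k$ is essentially sharp, but when $x$ lies close to the leaves of $V_n$ one has to exploit the constraint $y\in V_n$ to push the meet point $z$ down to depth at least $\lceil(\ell+k-n)/2\rceil$, which replaces the factor $(m-1)^k$ by $(m-1)^{(n+k-\ell)/2}$. This saving is exactly what converts the naive $(m-1)^{n+k}$ bound into the claimed $(m-1)^{n+\lfloor k/2\rfloor}$.
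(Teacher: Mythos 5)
Your proof is correct and follows essentially the same strategy as the paper: fix $x$ by its depth, bound the number of admissible $y$ at distance $k$ exploiting the boundary constraint $y\in V_n$, multiply by the number of vertices at that depth, and sum. The only difference is cosmetic: you parametrise by the depth $j_1$ of the meet point and sum an explicit geometric series over $j_1\ge j_1^{\min}$, whereas the paper writes down the dominant count $(m-1)^{\lfloor(k-\ell)/2\rfloor+\ell}$ directly (corresponding to the minimal number of up-steps) and splits into $\ell<k$ versus $\ell\ge k$; your version is, if anything, slightly more careful about recording that the $j_1$-sum is dominated by its top term.
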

\begin{proof}
	Let $e(x):=d(o,x)$ and for any $\ell>0$ define
	$$\partial_0 V_\ell:=\{x\in V_n:e(x)=\ell\}.$$
	In other words, $\partial_0 V_\ell$ is the set of all leaf-vertices in $V_\ell$. Let us now fix any vertex in $\partial_0 V_{n-\ell}$ and count the number of $y$'s in $V_n$ such that $d(x,y)=k$. Notice that for any $x\in \T_m\setminus \{o\}$, we have $e(y)=e(x)\pm 1$ for all $y$ with $x\sim y$. Moreover, $e(y)=e(x)- 1$ holds for only one such $y$ and $e(y)=e(x)+ 1$ holds for remaining $(m-1)$ many such $y$. In other words, from any $x\in \T_m\setminus \{o\}$ there is only one way to move closer to $o$ and $(m-1)$ way to move farther away from $o$.
	Now let us first consider the case when $k\le n$. If $0\le \ell<k$, then for any $x\in\partial_0 V_{n-\ell}$ the number of vertices in $V_n$ which are at a distance $k$ from $x$ is $(m-1)^{\floor{(k-\ell)/2}+\ell}$. This is because the unique path of length $k$ from $x$ to some vertex in $V_n$ must consists of first $\ceil{(k-l)/2}$ steps moving closer to $o$ and the remaining steps moving in any of the $(m-1)$ available directions. The situation is explained graphically in Figure~\ref{fig:lemma9}. On the other hand if $\ell \ge k$, then for any $x\in \partial_0 V_{n-\ell}$  the number of vertices in $V_n$ which are at a distance $k$ from $x$ is $m(m-1)^{k-1}$. Also, note that for any $j\ge 1$ there are $m(m-1)^{j-1}$ vertices in $\partial_0 V_j$. Therefore in this case
	\begin{align*}
	&C_k = \sum_{\ell=0}^{k-1} m(m-1)^{n-\ell-1}(m-1)^{\floor{\frac{k-\ell}{2}}+\ell}\\
	&\qquad\qquad\qquad\qquad + \sum_{\ell=k}^{n-1} m(m-1)^{n-\ell-1}m(m-1)^{k-1} + m(m-1)^{k-1}\\
	&\qquad = m(m-1)^{n+\floor{\frac{k}2}-1}\sum_{\ell=0}^{k-1}(m-1)^{-\floor{\frac{\ell}2}} + m^2 (m-1)^{n-2}\sum_{\ell=0}^{n-k-1}(m-1)^{-\ell} + m(m-1)^{k-1}\\
	&\qquad \le C (m-1)^{n+\floor{\frac{k}{2}}}.	
	\end{align*}
	Now we consider the case when $k>n$. In this case for any $x\in \partial_0 V_{n-\ell}$ the maximum number of vertices which are at a distance $k$ from $x$ is $(m-1)^{\floor{(k-\ell)/2}+\ell}$ (here we are over-counting and for $k>2n-\ell$ this number is $0$). Therefore
	\begin{align*}
	C_k &\le \sum_{\ell=0}^n m(m-1)^{n-\ell-1}(m-1)^{\floor{\frac{k-\ell}2}+\ell}\\
	&= m(m-1)^{n-1+\floor{\frac{k}{2}}}\sum_{\ell=0}^n(m-1)^{-\floor{\frac{\ell}{2}}}\\
	&\le C (m-1)^{n+\floor{\frac{k}{2}}}.
	\end{align*}\hfill\qedhere
\end{proof}
\begin{figure}[ht!]\center
\includegraphics[scale=1]{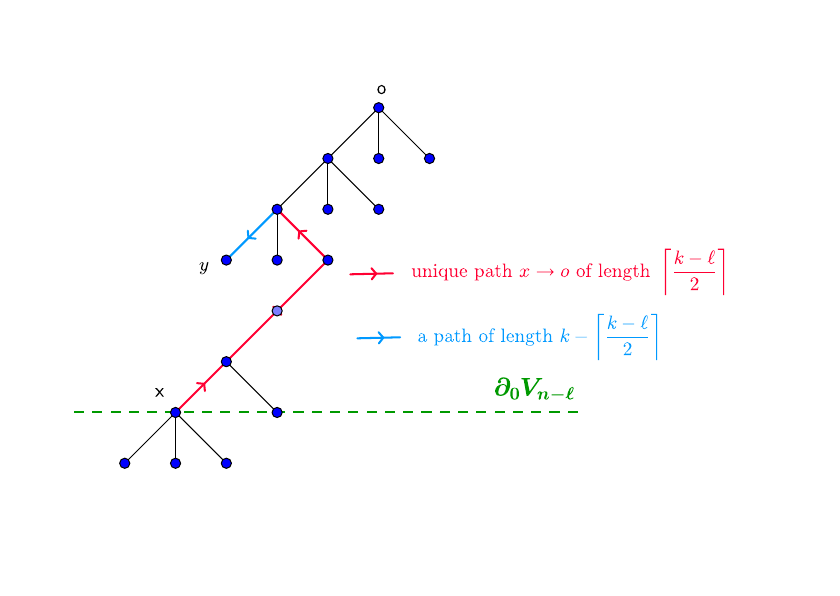}
\caption{Case $0\le \ell<k\le n$ for the proof of Lemma 10. If $d(x,\,y)=k$, in order to reach $y$ from $x$ one must move to their least common ancestor in $\lceil(k-\ell)/2\rceil$ steps and from there move to $y$ in $k-\lceil(k-\ell)/2\rceil$ steps. The $3-$ary tree is partially drawn for simplicity.}
\label{fig:lemma9}
\end{figure}
Our final lemma before the proof of Theorem~\ref{thm:inf_vol} gives an estimate on the second moment of the exit time.
\begin{lemma}\label{lem:bound_tau^2}
 For any $x\in V_n$
\begin{align}\label{eq:bound_tau^2}
\E_x[\tau_0^2] \le C d(x,\partial_1V_n)^2.
\end{align}
\end{lemma}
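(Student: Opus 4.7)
The plan is to reduce the problem to a one-dimensional reflected random walk with strictly positive drift. Set $Y_k:=d(o,S_k)$. By $m$-regularity, $(Y_k)_{k\ge 0}$ is a nearest-neighbour Markov chain on $\Z_{\ge 0}$: from any state $j\ge 1$ it moves to $j+1$ with probability $(m-1)/m$ and to $j-1$ with probability $1/m$, while from state $0$ it moves deterministically to $1$. In particular $\tau_0$ coincides with the first hitting time of level $n+1$ by $Y$, so we may assume $x\in V_n$, in which case $d(x,\partial_1 V_n)=n+1-e(x)$ (the inequality to be proved is trivial otherwise).

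The next step is to dominate $Y$ from below by an unreflected biased random walk. Let $U_1,U_2,\dots$ be i.i.d.\ with $\prob(U_i=+1)=(m-1)/m$ and $\prob(U_i=-1)=1/m$, and drive $Y$ by $U_{k+1}$ whenever $Y_k\ge 1$, applying the deterministic reflection rule when $Y_k=0$. Put $W_k:=e(x)+\sum_{i=1}^k U_i$. A direct check shows that $D_k:=Y_k-W_k$ is non-decreasing: $D_{k+1}-D_k=0$ whenever $Y_k\ge 1$, while whenever $Y_k=0$ one has $D_{k+1}-D_k=1-U_{k+1}\in\{0,2\}$. Since $D_0=0$, we conclude $Y_k\ge W_k$ for every $k$, so that $\tau_0\le T^W:=\inf\{k\ge 0:W_k\ge n+1\}$.

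Finally, I would control $T^W$ via its ladder decomposition. Writing $\rho:=\inf\{k\ge 1:W_k\ge W_0+1\}$, the strong Markov property gives $T^W=\sum_{i=1}^{d(x,\partial_1 V_n)}\rho_i$ for i.i.d.\ copies $\rho_i$ of $\rho$. Since $m\ge 3$, the increments of $W$ have mean $(m-2)/m>0$, hence Hoeffding's inequality applied to $\sum_{i=1}^k U_i$ yields
\[
\prob(\rho>k)\le \prob(W_k\le W_0)\le \exp\!\left(-\frac{(m-2)^2}{2m^2}\,k\right),
\]
so in particular $\E[\rho^2]\le C_m<\infty$. Consequently
\[
\E_x[\tau_0^2]\le \E\bigl[(T^W)^2\bigr]= d(x,\partial_1 V_n)\,\var(\rho)+d(x,\partial_1 V_n)^2\,(\E\rho)^2\le C\,d(x,\partial_1 V_n)^2,
\]
as required. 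The only genuinely delicate point is the monotone coupling $Y_k\ge W_k$; this is handled by the case analysis described above. The decomposition of $T^W$ into i.i.d.\ ladder epochs and the Hoeffding tail bound are then standard.
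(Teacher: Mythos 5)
Your proposal is correct, and it takes a genuinely different route from the paper's. The paper proves the lemma through the Green function: it uses the identity $\E_x\bigl[\sum_{z\in V_n}\sum_{\ell=0}^{\tau_0-1}\ell\,\one_{[S_\ell=z]}\bigr]=\E_x[\tau_0(\tau_0-1)/2]$, relaxes the inner sum to the full Green function $G(x,z)$ from Lemma~\ref{lem:G}, and then counts vertices at each distance from $x$ to evaluate $\sum_{z\in V_n}G(x,z)$, exploiting the explicit exponential decay of $G$. Your argument instead projects the walk to the distance process $Y_k=d(o,S_k)$, couples $Y$ from below with a genuine biased nearest-neighbour random walk $W$ via the monotone coupling $Y_k-W_k$ non-decreasing, identifies $\tau_0$ with the level-$(n+1)$ hitting time of $Y$ (hence bounded by that of $W$), and then decomposes $T^W$ into $d(x,\partial_1V_n)$ i.i.d.\ ladder epochs, each with exponential tails by Hoeffding. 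Both approaches yield the required quadratic bound with an $m$-dependent constant. Your argument is somewhat more elementary and self-contained, needing only the positive drift of the distance chain and no closed-form Green function, which also makes it more robust toward the extensions to general rooted trees mentioned in the paper's discussion; the paper's argument is instead tightly integrated with the Green-function machinery (Lemma~\ref{lem:G}, Lemma~\ref{lem:C_k}) that it reuses repeatedly elsewhere. One microscopic point worth spelling out when you write the ladder bound is that $T^W=\sum_{i=1}^{d}\rho_i$ because $W$ is nearest-neighbour, so the first time $W$ reaches level $e(x)+i$ is exactly when it first exceeds $e(x)+i-1$; you have this, but make it explicit.
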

\begin{proof}
 Observe that
 $\E_x\left[\sum_{z\in V_n} \sum_{\ell=0}^{\tau_0-1}\ell\one_{[S_\ell=z]}\right] = \E_x\left[{\tau_0 (\tau_0-1)}/{2}\right]$ which implies
\[
\E_x\left[\tau_0^2\right] \le 2 \E_x\left[\sum_{z\in V_n} \sum_{\ell=0}^{\tau_0-1}\ell\one_{[S_\ell=z]}\right]\\\le 2 \sum_{z\in V_n} G(x,z).
\]
By using Lemma~\ref{lem:G} we get
\[
\E_x\left[\tau_0^2\right]  \le C \sum_{z\in V_n} d(x,z)(m-1)^{-d(x,z)}
= C \sum_{k=0}^{2n} \sum_{z:d(x,z)=k} k(m-1)^{-k}.
\]
Now suppose $\ell= d(0,x)$. With arguments similar to the proof of Lemma~\ref{lem:C_k}, we have
 $$|\{z: d(x,z)=k\}|\le \begin{cases}
 m(m-1)^k & \text{ if } 0\le k\le n-\ell\\
 (m-1)^{\frac{k}{2} + \frac{n-\ell}{2}} & \text{ if } n-\ell+1\le k\le n+\ell.
\end{cases}$$
Hence we have
\begin{align*}
\E_x[\tau_0^2] & \le C \sum_{k=0}^{n-\ell} m(m-1)^k k(m-1)^{-k} + C \sum_{k= n-\ell+1}^{n+\ell} (m-1)^{\frac{k}{2} + \frac{n-\ell}{2}} k(m-1)^{-k}\\
& \le C\sum_{k=0}^{n-\ell} k + \sum_{k=1}^{2\ell}(k+n-\ell)(m-1)^{-\frac{k}{2}} \le C (n-\ell)^2 \le C d(x,\partial_1V_n)^2.\qedhere
\end{align*}
\end{proof}
\begin{proof}[Proof of Theorem~\ref{thm:inf_vol}]
 Since the random walk on $\T_m$ starting from vertex $x$ is transient, $\tau_0$ is finite almost surely and $\tau_0\ge n-d(o,x)$ for all $n\ge 1$. Therefore $\tau_0$ increases to $\infty$ as $n\to \infty$. Hence as an immediate conclusion $\{\overline{G}_n(x,y)\}_{n\ge1}$ is an increasing sequence. By monotone convergence theorem we have
\begin{align*}
\lim_{n\to\infty}\overline{G}_n (x,y) =\E_x\left( \sum\limits_{k=0}^{\infty}(k+1)\one_{[S_k=y]} \right).
\end{align*} 
We now show that $|E_n(x,y)|\to 0$ as $n$ tends to infinity so that we have $$\lim_{n\to\infty}G_n (x,y) = \E_x\left( \sum\limits_{k=0}^{\infty}(k+1)\one_{[S_k=y]} \right).$$ As the measures under consideration are Gaussian, this will complete the proof.  Recall the representation of $E_n(x,y)$ from \eqref{error:2} with $h(z)=\E_z\left[\sum\limits_{k=0}^{\tau_{1}-1} k\one_{[S_k=y]} \right]$. 
Since $Q$ has eigenvalues $\lambda$ it can be seen that $\left(\frac{1}{I+Q}\right)= \sum_{\lambda} \frac{1}{1+\lambda} \Pi_{\lambda}$. So we can rewrite \eqref{error:2} as
\begin{align*}
E_n(x,y)&=\E_x\left[ \tau_0 \left(\frac{1}{I+Q}\right)h(S_{\tau_0})\right]\\
&=\E_x\left[\sum_{z\in V_n^c} \tau_0 \one_{[S_{\tau_0}=z]} \left(\frac{1}{I+Q}\right)h(z)\right] = \sum_{z\in V_n^c}\E_x\left[\tau_0 \one_{[S_{\tau_0}=z]}\right]\left(\frac{1}{I+Q}\right)h(z).
\end{align*}
By Cauchy-Schwarz inequality and the fact that $\|\left({I+Q}\right)^{-1}\| \le 1$ we get
\begin{align}\label{eq:E_n}
E_n(x,y)^2 &\le \sum_{z\in V_n^c} \left(\E_x\left[\tau_0 \one_{[S_{\tau_0}=z]}\right]\right)^2 \sum_{z\in V_n^c}h(z)^2 \nonumber\\
&= \sum_{z\in \partial_1V_n} \left(\E_x\left[\tau_0 \one_{[S_{\tau_0}=z]}\right]\right)^2 \sum_{z\in \partial_1V_n}h(z)^2 \nonumber\\
& \le \sum_{z\in \partial_1V_n} \E_x[\tau_0^2]\prob_x(S_{\tau_0}=z)\sum_{z\in \partial_1V_n} G(z,y)^2\nonumber\\
& \le \E_x[\tau_0^2]\sum_{z\in \partial_1V_n} G(z,y)^2 .
\end{align}
Now we obtain a bound for the second factor in~\eqref{eq:E_n}. We have for $y\in\partial_0 V_\ell$
\begin{align*}
\sum_{z\in \partial_1V_n} G(z,y)^2 &\le C \sum_{k=n-\ell+1}^{n+\ell+1} \sum_{z:d(z,y)=k} k^2(m-1)^{-2k} \le C \sum_{k=n-\ell+1}^{n+\ell+1} m(m-1)^{k-1}k^2(m-1)^{-2k}\\
& \le C \sum_{k=1}^{2\ell} (k+n-\ell)^2 (m-1)^{-(k+n-\ell)} \le C (n-\ell)^2 (m-1)^{-(n-\ell)}.
\end{align*} 
Thus we have 
\begin{align}\label{eq:E_n_2}
\sum_{z\in \partial_1V_n} G(z,y)^2 \le C d(y, \partial_1V_n)^2 (m-1)^{-d(y, \partial_1V_n)}.
\end{align}
Plugging in the bounds~\eqref{eq:bound_tau^2},~\eqref{eq:E_n_2} in~\eqref{eq:E_n} we obtain
\begin{align*}
E_n(x,y)^2 \le C d(x,\partial_1V_n)^2d(y, \partial_1V_n)^2 (m-1)^{-d(y, \partial_1V_n)}.
\end{align*}
From symmetry we can conclude
\begin{align}\label{eq:E_n_bound}
E_n(x,y)^2 \le C d(x,\partial_1V_n)^2d(y, \partial_1V_n)^2\min\{ (m-1)^{-d(x, \partial_1V_n)}, (m-1)^{-d(y, \partial_1V_n)}\}.
\end{align}
It follows from~\eqref{eq:E_n_bound} that $|E_n(x,y)|\to 0$ as $n\to\infty$.
\end{proof}
An alternative proof of~\eqref{eq:E_n_bound} using the maximum principle for harmonic functions is provided in Appendix~\ref{app:alternative}.
\section{Proof of Theorem~\ref{thm:max}}\label{section:thm:max}
In this section we prove Theorem~\ref{thm:max}.
\begin{proof}[Proof of Theorem~\ref{thm:max}]
	For any $x\in\T_m$, we define $\psi_x:=\vr_x/\sqrt{G(o,o)}$. Then for each $x$, $\psi_x$ is a standard Gaussian random variable.  Recall that for any $n$
\begin{align*}
B_n=\frac{b_n}{\sqrt{G(o,o)}}= \sqrt{2\log N} -\frac{\log\log N + \log{(4\pi)}}{2\sqrt{2\log N}},\,\,\,A_n= B_n^{-1}.
\end{align*}
We set
\begin{align*}
u_n(\theta):= A_n\theta + B_n,\,\,\theta\in\R.
\end{align*}
Let $\theta\in\R$ be fixed and define
\begin{align*}
W_n:=\sum_{x\in V_n}\one_{[\psi_x > u_n(\theta)]},\,\,\lambda_n:=\E[W_n]=\sum_{x\in V_n} \prob(\psi_x>u_n(\theta)).
\end{align*}
Let $Poi(\lambda)$ denote a Poisson random variable with parameter $\lambda$. We shall use a Binomial-to-Poisson approximation by \cite{hj90}. By Theorem 3.1 of~\cite{hj90} we get
\begin{align}\label{eq:thm3.1}
&d_{TV}\left(W_n, Poi(\lambda_n)\right) \le \frac{1-\e^{-\lambda_n}}{\lambda_n}\left[\sum_{x\in V_n} \prob(\psi_x>u_n(\theta))^2\nonumber\right.\\
&\left.\qquad\qquad\qquad\qquad\qquad\qquad + \sum_{x,y\in V_n, x\neq y}|\mathbf{Cov}\left(\one_{[\psi_x > u_n(\theta)]}, \one_{[\psi_y > u_n(\theta)]}\right)|\right],
\end{align}
where $d_{TV}$ is the total variation distance. We want to prove that $d_{TV}\left(W_n, Poi(\lambda_n)\right)$ goes to zero as $n$ tends to infinity. Once we prove it, we will have
$$|\prob({W_n=0})- \e^{-\lambda_n}| \to 0.$$
But 
\begin{align*}\prob({W_n=0})=P\left(\max_{x\in V_n} \psi_x \le u_n(\theta)\right)=P\left( \frac{\max_{x\in V_n} \vr_x - b_n}{a_n} \le \theta\right).
\end{align*} 
 Using Mill's ratio
 \begin{align*} \left(1-\frac1{t^2} \right)\frac{\e^{-\frac{t^2}2}}{\sqrt{2\pi}t} \le P\left(\mathcal N(0,1) >t\right) \le  \frac{\e^{-\frac{t^2}2}}{\sqrt{2\pi}t} \qquad t>0, \end{align*}
  one can show that $\lambda_n = N\prob(\psi_o>u_n(\theta))$ converges to $\e^{-\theta}$ as $n$ tends to infinity. Hence the proof will be complete.

We now obtain bounds for the terms in~\eqref{eq:thm3.1} to prove that $d_{TV}\left(W_n, Poi(\lambda_n)\right)$ goes to zero as $n$ tends to infinity.
Another use of Mill's ratio gives
\begin{align*}
\frac{1-\e^{-\lambda_n}}{\lambda_n}\sum_{x\in V_n} \prob(\psi_x>u_n(\theta))^2 \to 0.
\end{align*}
Now we give a bound on the other term in~\eqref{eq:thm3.1}. Let $x,y\in V_n$ with $d(x,y)=k\ge 1$ and define $r_k:=\mathbf{Cov}(\psi_x, \psi_y)$. From Lemma~\ref{lem:G} it follows that $r_k$ depends only on $k$, not on $x$ or $y$, and moreover $0<r_k <1$ for all $k\ge 1$. Now from Lemma 3.4 of~\cite{hj90} we obtain the following bounds:
\begin{align}
&0\le \mathbf{Cov}\left(\one_{[\psi_x > u_n(\theta)]}, \one_{[\psi_y > u_n(\theta)]}\right) \le C N^{-\frac{2}{1+r_k}} \label{eq:bound1}\\
& 0\le \mathbf{Cov}\left(\one_{[\psi_x > u_n(\theta)]}, \one_{[\psi_y > u_n(\theta)]}\right) \le Cr_k N^{-2}\log N \,\e^{2r_k\log N}.\label{eq:bound2}
\end{align}
We fix $\delta$ such that $0\le \delta < (1-r_1)/(1+r_1)$. We have 
\begin{align*}
I_n:&=\frac{1-\e^{-\lambda_n}}{\lambda_n}\sum_{x,y\in V_n, x\neq y}\left|\mathbf{Cov}\left(\one_{[\psi_x > u_n(\theta)]}, \one_{[\psi_y > u_n(\theta)]}\right)\right|\\
& \le C \sum_{k=1}^{2n} \sum_{x,y\in V_n,\, d(x,y)=k}\mathbf{Cov}\left(\one_{[\psi_x > u_n(\theta)]}, \one_{[\psi_y > u_n(\theta)]}\right).
\end{align*}
Now we use the bounds~\eqref{eq:bound1}, ~\eqref{eq:bound2} and get the following bound for the above term:
\begin{align*}
I_n &\le C \sum_{k=1}^{\floor{2n\delta}} C_k N^{-\frac{2}{1+r_k}} + C \sum_{\floor{2n\delta}+1}^{2n} C_k N^{-2}\log N \e^{2r_k\log N}\\
& \le C \sum_{k=1}^{\floor{2n\delta}}(m-1)^{n+\floor{\frac{k}2}}N^{-\frac{2}{1+r_k}} + C\sum_{k=\floor{2n\delta}+1}^{2n} (m-1)^{n+\floor{\frac{k}2}} k(m-1)^{-k} N^{-2}\log N \e^{2r_{\floor{2n\delta}}\log N}\\
& \le C (m-1)^{n+n\delta-\frac{2n}{1+r_1}} + C n^3 (m-1)^{-n+2nr_{\floor{2n\delta}}}.
\end{align*}
Note that in the second inequality we have used Lemma~\ref{lem:C_k} and the fact that $r_k$ is decreasing in $k$ with $r_k \le C k(m-1)^{-k}$, which follows from Lemma~\ref{lem:G}. Now observe that by definition $1+\delta < 2/(1+r_1)$ and $r_{\floor{2n\delta}} < 1/2$ for large enough $n$. Thus $I_n$ goes to $0$ as $n$ tends to infinity and we conclude $d_{TV}\left(W_n, Poi(\lambda_n)\right)$ goes to zero as $n$ tends to infinity. 
\end{proof}

\section{Proof of Theorem~\ref{thm:exp_max_0}}\label{section:proofthm:exp_max_0}
In this section we prove Theorem~ \ref{thm:exp_max_0}. We use the following 
\begin{lemma}\label{lem:G_(x,x)_upper_bound}
For any $x\in V_n$ one has $G_n(x,x) \le G(o,\,o)$.\end{lemma}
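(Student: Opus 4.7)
The plan is to exploit the Gaussian Markov property of the finite-volume measures and pass to the limit $N \to \infty$. For any $N \ge n$, Lemma~\ref{lem:model_def} identifies $\prob_N$ as the centered Gaussian on $\R^{V_N}$ whose precision matrix is $\Delta^2_{V_N} = (\Delta^2(x,y))_{x,y \in V_N}$, with inverse $G_N$. A crucial observation is that the entries $\Delta^2(x,y)$ depend only on the pair $(x,y)$ and not on the ambient finite domain, so the $V_n \times V_n$ principal submatrix of $\Delta^2_{V_N}$ is literally $\Delta^2_{V_n}$. By standard Gaussian conditioning, this means that under $\prob_N$ the conditional law of $(\varphi_y)_{y \in V_n}$ given $\sigma((\varphi_z)_{z \in V_N \setminus V_n})$ is Gaussian with precision $\Delta^2_{V_n}$; in particular its conditional covariance matrix is the deterministic matrix $G_n$.

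Fixing $x \in V_n$ and applying the law of total variance to $\varphi_x$ under $\prob_N$ then gives
\begin{align*}
G_N(x,x) = G_n(x,x) + \var_{\prob_N}\!\left(\E_N\!\left[\varphi_x \mid (\varphi_z)_{z \in V_N \setminus V_n}\right]\right) \ge G_n(x,x).
\end{align*}
Letting $N \to \infty$, the proof of Theorem~\ref{thm:inf_vol} yields the pointwise convergence $G_N(x,x) \to G(x,x)$: indeed $\overline{G}_N(x,x) \uparrow G(x,x)$ by monotone convergence, and $|E_N(x,x)| \to 0$ by the estimate~\eqref{eq:E_n_bound}. Combined with Lemma~\ref{lem:G}, which tells us that $G(x,x)$ depends only on $d(x,x) = 0$ and hence equals $G(o,o)$, this yields the desired bound $G_n(x,x) \le G(o,o)$.

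No substantive obstacle is expected: the argument is essentially soft and rests entirely on ingredients already in hand, namely the identification $G_n = (\Delta^2_{V_n})^{-1}$ from Lemma~\ref{lem:model_def}, the pointwise convergence $G_N \to G$ extracted from the proof of Theorem~\ref{thm:inf_vol}, and the explicit value $G(o,o)$ from Lemma~\ref{lem:G}. The only mild subtlety — that $G_n$ is recovered as a conditional covariance inside $\prob_N$ — is immediate from the fact that the bilaplacian matrix entries are intrinsic to the pair of vertices and insensitive to the ambient domain.
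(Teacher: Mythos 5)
Your proof is correct, and it is essentially the standard argument that the cited reference (\cite{BCK}, Corollary 3.2) carries out on $\Z^d$: exploit the Gaussian Markov property by observing that the precision matrix $\Delta^2$ is intrinsic, so $\Delta^2_{V_n}$ is a principal submatrix of $\Delta^2_{V_N}$, whence the conditional covariance of $(\varphi_y)_{y\in V_n}$ under $\prob_N$ is $G_n$; then the law of total variance gives $G_n(x,x)\le G_N(x,x)$ for $N\ge n$, and one passes to the limit using the pointwise convergence $G_N(x,x)\to G(x,x)=G(o,o)$ established in the proof of Theorem~\ref{thm:inf_vol} together with Lemma~\ref{lem:G}. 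This is precisely the adaptation the paper alludes to, so no substantive differences arise.
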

\begin{proof}
The proof can be readily adapted from that of~\citet[Corollary 3.2]{BCK} which is carried out for $\Z^d$.
\end{proof}
The following lemma gives a bound on $\E_x[\tau_0]$ in terms of the distance of the point $x$ from the boundary.
\begin{lemma} \label{lemma:bound_tau}
For $x\in V_n$
\begin{align}\label{eq:bound_tau}
\E_x[\tau_0] \le C_1(m) d(x,\partial_1V_n),
\end{align}
where 
\begin{equation}\label{eq:C1}
C_1(m)=\frac{(m-1)^{\frac32}}{(m-2)(\sqrt{m-1}-1)} +\frac{m}{m-2}.
\end{equation}
\end{lemma}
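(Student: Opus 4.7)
The plan is to reduce to a one-dimensional analysis via the depth process $D_k := d(o,S_k)$. Because every non-root vertex of $\T_m$ has one parent and $m-1$ children, while the root has $m$ children, $D$ is itself a Markov chain on $\{0,1,2,\dots\}$ that jumps from level $j \ge 1$ to $j+1$ with probability $(m-1)/m$ and to $j-1$ with probability $1/m$, and from $0$ deterministically to $1$. Since $V_n^c = \{z : d(o,z) \ge n+1\}$, writing $\ell := d(o,x)$ identifies $\tau_0$ with the first hitting time of level $n+1$ by $D$ started at $\ell$, and $d(x,\partial_1 V_n) = n+1-\ell$.

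Next I would produce an explicit superharmonic majorant. Define
\[
g(z) := \frac{m}{m-2}\bigl(n+1-d(o,z)\bigr) \text{ for } z \in V_n, \qquad g(z) := 0 \text{ for } z \in \partial_1 V_n.
\]
A direct computation using \eqref{eq:def_Delta} gives $\Delta g(z) = \frac{1}{m}\bigl[(m-1)\cdot(-\tfrac{m}{m-2}) + \tfrac{m}{m-2}\bigr] = -1$ for every $z \in V_n \setminus \{o\}$, and $\Delta g(o) = -\tfrac{m}{m-2} \le -1$ since all $m$ neighbors of the root sit at depth $1$. Hence $\Delta g \le -1$ throughout $V_n$, so by the telescoping identity $\E_x[g(S_{\tau_0})] - g(x) = \E_x\bigl[\sum_{k=0}^{\tau_0-1}\Delta g(S_k)\bigr]$ combined with $g \equiv 0$ on $\partial_1 V_n$, I obtain
\[
\E_x[\tau_0] \le g(x) = \frac{m}{m-2}\, d(x,\partial_1 V_n),
\]
which is even stronger than the claimed estimate, since $\frac{m}{m-2} \le C_1(m)$.

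Alternatively, one may solve the depth-process hitting-time recursion $h_{n+1} = 0$, $h_0 = 1 + h_1$, and $h_j = 1 + \tfrac{m-1}{m}h_{j+1} + \tfrac{1}{m}h_{j-1}$ for $1 \le j \le n$. Setting $\gamma_j := h_j - h_{j+1}$ turns it into the linear recursion $\gamma_j = \tfrac{m}{m-1} + \tfrac{\gamma_{j-1}}{m-1}$ with $\gamma_0 = 1$, whose closed-form solution $\gamma_j = \tfrac{m}{m-2} - \tfrac{2}{(m-2)(m-1)^j}$ is uniformly bounded above by $\tfrac{m}{m-2}$; summing from $\ell$ to $n$ recovers the same estimate.

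I do not anticipate any serious obstacle: the only mild subtlety is that the depth chain reflects at the root, which is why $\Delta g(o)$ equals $-m/(m-2)$ rather than $-1$, but this value is even more negative, so the superharmonicity inequality $\Delta g \le -1$ persists on all of $V_n$ for every $m \ge 3$. The remaining content is routine algebra, and the margin between the tight constant $\tfrac{m}{m-2}$ and the paper's $C_1(m)$ leaves ample room for any less efficient (but more modular) bookkeeping one might prefer.
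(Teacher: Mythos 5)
Your proof is correct, and it takes a genuinely different route from the paper. The paper writes $\E_x[\tau_0]$ as a sum of local times, majorizes each local time by the transient Green's function $g(1) = (m-1)/((m-2)(m-1)^{d(x,y)})$, and then sums over distance shells (exactly as in Lemma~\ref{lem:C_k}); the constant $C_1(m)$ is what falls out of that somewhat lossy layer-by-layer bound. You instead construct a superharmonic majorant $g(z) = \frac{m}{m-2}\,d(z,\partial_1 V_n)$, check $\Delta g \le -1$ on $V_n$ (with the only noteworthy point being the root, where $\Delta g(o) = -m/(m-2)$ is even more negative), and conclude via optional stopping applied to the Dynkin martingale $g(S_k) - \sum_{j<k}\Delta g(S_j)$ at $\tau_0 \wedge K$, followed by monotone convergence. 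This is both more elementary (it never touches the closed-form Green's function \eqref{eq:lem_1.24}) and sharper: you obtain the tight constant $m/(m-2)$ rather than $C_1(m)$, and indeed your depth-process recursion $\gamma_j = \frac{m}{m-2} - \frac{2}{(m-2)(m-1)^j}$ shows the bound cannot be improved in the prefactor. The only formal point worth making explicit in a writeup is that $d(x,\partial_1 V_n) = n+1 - d(o,x)$ so that $g$ really is the function you claim, and that the passage $K \to\infty$ in the optional stopping argument is justified because $g \ge 0$ and $\tau_0 < \infty$ a.s.; both are routine.
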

\begin{proof}
Using \eqref{greensrw:tree} we have
\begin{align*}
\E_x[\tau_0]&= \E_x\left[\sum_{y\in V_n} \sum_{\ell=0}^{\tau_0-1}\one_{[S_\ell=y]}\right]\le \sum_{y\in V_n} \E_x\left[\sum_{\ell=0}^{\infty}\one_{[S_\ell=y]}\right]\\
&= \sum_{y\in V_n} \frac{1}{(m-2)(m-1)^{d(x,y)-1}}.
\end{align*}
Suppose $d(o,x)=k$. Then similarly to the proof of Lemma~\ref{lem:C_k} we argue that 
\begin{align*}
|\{y\in V_n: d(x,y)=\ell\}|=\begin{cases} m(m-1)^{\ell-1},\,\,\,\text{ if } 1\le \ell \le n-k\\
(m-1)^{\floor{\frac{l-n+k}2} +(n-k)},\,\,\,\text{ if } n-k < \ell \le n+k\\
0,\,\,\,\text{ if } \ell > n+k \end{cases}.
\end{align*}
Therefore splitting the sum according to the distance $d(x,y)$ we get,
\begin{align*}
\E_x[\tau_0] &= \frac{m-1}{m-2}  \sum_{y\in V_n} (m-1)^{-d(x,y)}\\ 
& \le  \frac{m-1}{m-2} \left[ 1+ \sum_{\ell=1}^{n-k} \sum_{y\in V_n: d(x,y)=\ell} (m-1)^{-\ell} + \sum_{\ell=n-k+1}^{n+k} \sum_{y\in V_n: d(x,y)=\ell} (m-1)^{-\ell} \right]\\
& = \frac{m-1}{m-2} \left[ 1+ \sum_{\ell=1}^{n-k} m(m-1)^{\ell-1}(m-1)^{-\ell} + \sum_{\ell=n-k+1}^{n+k} (m-1)^{\floor{\frac{\ell-n+k}2} +(n-k)} (m-1)^{-\ell} \right].
\end{align*}
As $\floor{\frac{\ell-n+k}2} \le \frac{\ell}2-\frac{n-k}2$, we have 
\begin{align*}
\E_x[\tau_0] & \le \frac{m-1}{m-2} \left[ 1+ \frac{m}{m-1} (n-k) + \sum_{\ell=n-k+1}^{n+k} (m-1)^{\frac{\ell}2 -\frac{n-k}2 +n-k-\ell}\right]\\
& =\frac{m-1}{m-2} \left[ 1+ \frac{m}{m-1} (n-k) + \sum_{\ell=1}^{2k} (m-1)^{-\frac{\ell}2}\right]\\
& = \frac{m-1}{m-2} \left[ 1+ \frac{m}{m-1} (n-k) + (m-1)^{-\frac12} \frac{1-(m-1)^{-k}}{1-(m-1)^{-\frac12}} \right]\\
&\le \left( \frac{(m-1)^{\frac32}}{(m-2)(\sqrt{m-1}-1)} +\frac{m}{m-2} \right) d(x,\partial_1V_n).\qedhere
\end{align*}
\end{proof}
The following bound will also be useful.
\begin{lemma}\label{lem:lemma2}
For any $z\in\partial_1V_n$
\begin{align*}
\E_{z}\left[\sum\limits_{k=0}^{\tau_1-1}k\one_{[S_k=y]} \right] \le C_2(m) d(y,\partial_1V_n) (m-1)^{-d(y,\partial_1V_n)},
\end{align*}
where
\begin{equation}\label{eq:C2}
C_2(m)=\left( \frac{2(m-1)^2}{(m-2)^3} + \frac{m(m-1)}{(m-2)^2} \right).
\end{equation}
\end{lemma}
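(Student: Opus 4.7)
The plan is to avoid a delicate excursion analysis and simply bound the sum by extending its range to all $k \ge 0$. Since the summands are nonnegative,
\[
\E_z\!\left[\sum_{k=0}^{\tau_1-1} k \one_{[S_k=y]}\right] \le \sum_{k=0}^{\infty} k\,\prob_z(S_k=y) = g'(1),
\]
where $g(\z) := \Gamma(z,y|\z)$ is the SRW Green's function power series on $\T_m$ analysed in the proof of Lemma~\ref{lem:G}. (For $y\notin V_n$ the left-hand side vanishes, because $S_k\in V_n$ for $k\in[1,\tau_1-1]$ and the $k=0$ term is zero; so we may assume $y\in V_n$.)

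From the proof of Lemma~\ref{lem:G} one reads off $g(1) = 1/\!\left((m-2)(m-1)^{d-1}\right)$ and $g'(1)/g(1) = \left(2(m-1)+dm(m-2)\right)/(m-2)^2$, with $d:=d(z,y)$. These combine to
\[
g'(1) \;=\; \frac{dm}{(m-2)^2(m-1)^{d-1}} + \frac{2(m-1)}{(m-2)^3(m-1)^{d-1}}.
\]
Setting $a := d(y,\partial_1 V_n)$, the hypotheses $y\in V_n$ and $z\in\partial_1V_n$ give $a\ge 1$ and $d\ge a$. The second summand is bounded, by monotonicity of $(m-1)^{-d}$ in $d$ and $a\ge 1$, by $\bigl(2(m-1)^2/(m-2)^3\bigr)\,a\,(m-1)^{-a}$. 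For the first summand the only nontrivial algebraic step is the inequality $d(m-1)^{-d} \le a(m-1)^{-a}$ whenever $d\ge a\ge 1$ and $m\ge 3$; writing $d=a+t$ with $t\ge 0$, this reduces to $a+t \le a(m-1)^t$, which is immediate from Bernoulli's inequality $(m-1)^t\ge 1+t$ combined with $a+t\le a(1+t)$. Summing the two pieces produces exactly the constant $C_2(m)=\frac{m(m-1)}{(m-2)^2}+\frac{2(m-1)^2}{(m-2)^3}$.

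There is essentially no obstacle here; the only mildly subtle point is to resist the temptation of a first-step decomposition at $z$ (which splits into probability $1/m$ of moving to the unique neighbour $z'\in V_n$ and probability $(m-1)/m$ of moving deeper into $V_n^c$, and reduces the quantity to $(1/m)\overline{G}_n(z',y)$). That route gives sharper numerical constants, but the crude bound via $g'(1)$ already matches the statement and avoids the extra bookkeeping.
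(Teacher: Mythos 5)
Your proposal is correct and takes essentially the same route as the paper: the paper's own proof also drops the stopping time by extending the sum to all $k\ge 0$, identifies the result as $g'(1)=\frac{2(m-1)+d(z,y)\,m(m-2)}{(m-2)^3(m-1)^{d(z,y)-1}}$ via Lemma~\ref{lem:G}, and then bounds this by the stated expression using $d(z,y)\ge d(y,\partial_1V_n)\ge 1$. The only difference is that the paper states the final inequality without comment, while you spell out the monotonicity step $d(m-1)^{-d}\le a(m-1)^{-a}$ for $d\ge a\ge1$ via Bernoulli — a detail worth having written down.
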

\begin{proof}
We have 
\begin{align*}
\E_{z}\left[\sum\limits_{k=0}^{\tau_1-1}k\one_{[S_k=y]} \right] &  \le \E_{z}\left[\sum\limits_{k=0}^{\infty}k\one_{[S_k=y]} \right] \\
& = \frac{2(m-1) + d(z,y) m(m-2)}{(m-2)^3(m-1)^{d(z,y)-1}}\\
& \le \left( \frac{2(m-1)^2}{(m-2)^3} + \frac{m(m-1)}{(m-2)^2} \right) d(y,\partial_1V_n) (m-1)^{-d(y,\partial_1V_n)}.\qedhere
\end{align*}
\end{proof}

We now proceed to prove~Theorem~ \ref{thm:exp_max_0}.
\begin{proof}[Proof of Theorem~\ref{thm:exp_max_0}]
First we prove an upper bound for the expected maximum using a standard trick. Using Jensen's inequality and Lemma~\ref{lem:G_(x,x)_upper_bound} we have for any $\beta >0$
\begin{align*}
\E_n \left[ \max_{x\in V_n} \varphi_x \right]& \le \frac1{\beta} \log\left(\E_n\left[\sum_{x\in V_n}\e^{\beta \varphi_x}\right]\right) = \frac1{\beta} \log\left(\sum_{x\in V_n}\e^{\frac{\beta^2}2 G_n(x,x)}\right)\\
& \le \frac1{\beta} \log\left(\sum_{x\in V_n}\e^{\frac{\beta^2}2 G(o,o)}\right) = \frac{\beta}2 G(o,o) + \frac1{\beta} \log N.
\end{align*}
Optimizing over $\beta$ we obtain
\begin{align*}
\E_n \left[ \max_{x\in V_n} \varphi_x \right] \le \sqrt{2G(o,o) \log N}.
\end{align*}
Thus
\begin{align}\label{eq:limsup_exp_max_0}
\limsup_{n\to\infty} \frac{\E_n \left[ \max_{x\in V_n} \varphi_x \right]}{\sqrt{2\log N}} \le \sqrt{G(o,o)}.
\end{align}
Next we prove the lower bound for the limes inferior. We use a Gaussian comparison inequality on an appropriate set of vertices. For this we need a lower bound on $G_n(x,x)$ for $x$ in an appropriate subset of $V_n$. Using~\eqref{eq:G_n split}
we write
\begin{equation}\label{eq:G_n split_2}
G_n(x,y) = G(x,y) - \overline{E}_n(x,y) - E_n(x,y),
\end{equation}
where
\begin{align*}
\overline{E}_n(x,y):= \E_x\left[\sum\limits_{k=\tau_{0}}^\infty (k+1)\one_{[S_k=y]} \right].
\end{align*}
Our target is to obtain a bound for $\overline{E}_n(x,y)$. We have using Lemma~\ref{lemma:bound_tau}, Lemma~\ref{lem:lemma2} and \eqref{eq:lem_1.24} with $\z=1$
\begin{align}\label{eq:bar_E_n_bound}
\overline{E}_n(x,y) & = \E_x\left[\E_{S_{\tau_0}}\left[\sum\limits_{k=0}^\infty k\one_{[S_k=y]} \right]\right] + \E_x\left[(\tau_0+1)\E_{S_{\tau_0}}\left[\sum\limits_{k=0}^\infty\one_{[S_k=y]} \right]\right]\nonumber\\
& \le C_2(m) d(y,\partial_1V_n) (m-1)^{-d(y,\partial_1V_n)} + \frac{m-1}{m-2} (m-1)^{-d(y,\partial_1V_n)} \E_x[\tau_0+1]\nonumber\\
& \le C_2(m) d(y,\partial_1V_n) (m-1)^{-d(y,\partial_1V_n)} + \frac{m-1}{m-2} \left(1+C_1(m)d(x,\partial_1V_n)\right) (m-1)^{-d(y,\partial_1V_n)}.
\end{align}
To prove the bound for the limes inferior we define a subset $U_n$ of $V_n$ as follows. For each $z\in \partial_0 V_{n-2\lfloor\log n\rfloor}=\{x\in V_n: d(o,x)=n-2\lfloor\log n\rfloor \}$, choose exactly one $y_z \in \partial_0 V_{n-\lfloor\log n\rfloor}$ such that $d(z,\,y_z)=\lfloor\log n\rfloor$. Then define 
\begin{align*}
U_n:=\{y_z: z\in \partial_0 V_{n-2\lfloor\log n\rfloor}\}.
\end{align*}
Note that $U_n \subset \partial_0 V_{n-\lfloor\log n\rfloor}$ but $|U_n| = |\partial_0 V_{n-2\lfloor\log n\rfloor}| = m(m-1)^{n-2\lfloor\log n\rfloor-1}$.
Also from the definition of $U_n$ it follows that for any $x,y\in U_n$, $d(x,y) \ge 2\floor{\log n}$ and for any $x\in U_n$, $d(x,\partial_1 V_n)= \lfloor\log n\rfloor+1$. Now using the crude bound for $E_n(x,y)$ in \eqref{eq:E_n_bound} we have that for any $x,y\in U_n$
\begin{align*}
E_n(x,y) \le C_0(m)(\log n)^2 (m-1)^{-\log n} ,
\end{align*}
where $C_0(m)$ is a constant dependent on $m$ only.
Also from~\eqref{eq:bar_E_n_bound} we have for $x\in U_n$
\begin{align*}
\overline{E}_n(x,x)& \le C_2(m) \log n (m-1)^{-\log n} + \frac{m-1}{m-2} \left(1+C_1(m)\log n\right) (m-1)^{-\log n}\\
& \le  \frac{m-1}{m-2} \left(1+C_1(m)+C_2(m) \right)\log n (m-1)^{-\log n} .
\end{align*}
Using these bounds we get from~\eqref{eq:G_n split_2} that for any $y\in U_n$
\begin{align}\label{eq:G_n_lower bound}
G_n(y,y) &\ge G(o,o) - \frac{m-1}{m-2} \left(1+C_1(m)+C_2(m) \right)\log n (m-1)^{-\log n}\nonumber\\ 
&\qquad-  C_0(m)(\log n)^2 (m-1)^{-\log n}.
\end{align}
Also from Lemma~\ref{lem:G} we have for $y,\,y'\in U_n$
\begin{align*}
G(y,y') \le C_3(m) \log n (m-1)^{-2\log n}
\end{align*}
and hence
\begin{align}\label{eq:G_n_upper bound}
G_n(y,y') &\le G(y,y') + |E_n(y,y')|\nonumber\\
&\le C_3(m) \log n (m-1)^{-2\log n} + C_0(m) (\log n)^2 (m-1)^{-\log n}.
\end{align}
Now using~\eqref{eq:G_n_lower bound} and~\eqref{eq:G_n_upper bound} we have for $y,y'\in U_n$
\begin{align}\label{eq:for sud-fer}
\E_n\left[(\vr_y-\vr_{y'})^2\right]& = G_n(y,y) + G_n(y',y') - 2 G_n(y,y')\nonumber\\
&\ge 2\left[ G(o,o) - \frac{m-1}{m-2} \left(1+C_1(m)+C_2(m) \right)\log n (m-1)^{-\log n}  \right.\nonumber\\
&\left.- C_3(m) \log n (m-1)^{-2\log n} -C_0(m) (\log n)^2 (m-1)^{-\log n}\right].
\end{align}
We define
\begin{align*}
\gamma(n,m)&:= \left[ G(o,o) - \frac{m-1}{m-2} \left(1+C_1(m)+C_2(m) \right)\log n (m-1)^{-\log n}  \right.\\
&\left.- C_3(m) \log n (m-1)^{-2\log n} - C_0(m) (\log n)^2 (m-1)^{-\log n}\right].
\end{align*}
Note that $\gamma(n,m)\to G(o,o)$ as $n\to\infty$. Suppose $n$ is large enough so that $\gamma(n,m) >0$. Let $(\xi_x)_{x\in U_n}$ be~i.i.d.~centered Gaussian random variables with variance $\gamma(n,m)$. Then from~\eqref{eq:for sud-fer} we have
\begin{align*}
\E_n\left[(\vr_x-\vr_y)^2\right] \ge \E\left[(\xi_x-\xi_y)^2\right].
\end{align*}
Therefore by Sudakov-Fernique inequality~\cite[Theorem 2.~2.~3]{AdlerTaylor} we have
\begin{align*}
\E_n \left[ \max_{x\in U_n} \varphi_x \right] \ge \E \left[ \max_{x\in U_n} \xi_x \right].
\end{align*}
As $U_n\subset V_n$, we get
\begin{align*}
\liminf_{n\to\infty}\frac{\E_n \left[ \max_{x\in V_n} \varphi_x \right]}{\sqrt{2\log N}} \ge \liminf_{n\to\infty}\frac{\E_n \left[ \max_{x\in U_n} \xi_x \right]}{\sqrt{2\log |U_n|}} \sqrt{\frac{\log |U_n|}{\log N}}.
\end{align*}
But 
\begin{align*}
\frac{\log |U_n|}{\log N} = \frac{n-2\log n}{n} \overset{n\to\infty}{\to} 1.
\end{align*}
{
Therefore
\begin{align}\label{eq:liminf_exp_max_0}
\liminf_{n\to\infty}\frac{\E_n \left[ \max_{x\in V_n} \varphi_x \right]}{\sqrt{2\log N}} \ge \sqrt{G(o,o)}.
\end{align}
So the result follows now combining the lower bound \eqref{eq:liminf_exp_max_0} with the upper bound~\eqref{eq:limsup_exp_max_0}.}

\end{proof}

\section{Proof of Theorem~\ref{thm:max_0}}\label{section:thm:max_0}
In this section we prove Theorem~\ref{thm:max_0}. Before proving, we state two estimates which are crucially used in the proof. Recall the crucial error term in the Vanderbei's representation \eqref{error:rep}

\begin{align*}
E_n(x,y)=\lim\limits_{t\to\infty}\E_x\left[\sum\limits_{j=1}^{\eta_t}(-1)^{j-1}M_{j-1}\sum\limits_{k=\tau_{j-1}}^{\tau_j-1}(k-\tau_{j-1})\one_{[S_k=y]} \right].
\end{align*}

Previously in \eqref{eq:E_n_bound} we showed that 
$$E_n(x,y) \le C d(x,\partial_1V_n)d(y, \partial_1V_n)\min\{ (m-1)^{-\frac{1}{2}d(x, \partial_1V_n)}, (m-1)^{-\frac{1}{2}d(y, \partial_1V_n)}\}.$$
 This bound does not say anything about the dependency of the error on $d(x,y)$. We improve the bound to get a dependence on the distance between the two points and this is crucial for our proof. 

\begin{lemma}\label{lem:E_n_finer}
Let $x,y\in V_n$. For $m \ge 5$ and any $0\le J_0<\infty$ we have
\begin{align}\label{eq:E_n_finer} 
&|E_n(x,y)| \le J_0(2J_0+1)^{4J_0+2} 3^{4J_0^2} \left(\frac{4(m-1)}{m-2}\right)^{2J_0+1} \frac{m-1}{m-2}d^{2J_0+1} (m-1)^{-d(x,y)}\nonumber \\
&+ \frac{C_1(m) C_2(m)}{ \left(1-\frac{C_1(m)}{m}\right)}d(x,\partial_1V_n)d(y,\partial_1V_n) (m-1)^{-\max\{d(x,\partial_1V_n), d(y,\partial_1V_n)\}} \left(\frac{C_1(m)}{m}\right)^{J_0},
\end{align}
where $C_1(m)$ and $C_2(m)$ are constants defined in \eqref{eq:C1} and \eqref{eq:C2} respectively. 
\end{lemma}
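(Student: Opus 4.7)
The plan is to rewrite $E_n(x,y)$ in an operator-theoretic form, split the resulting series at level $J_0$, and control the head and tail separately. Using the strong Markov property at $\tau_0, \tau_1, \dots$ together with the identity $M_{j-1} = \tau_0 \prod_{i=1}^{j-1}(\tau_i - \tau_{i-1} - 1)$, and applying the operator $Q$ defined in \eqref{Q operator definition} repeatedly, one checks that
\begin{align*}
E_n(x,y) \;=\; \sum_{j=0}^\infty (-1)^{j}\, \E_x\bigl[\tau_0\, (Q^{j} h)(S_{\tau_0})\bigr],
\end{align*}
where $h(z)=\E_z\bigl[\sum_{k=0}^{\tau_1-1} k\,\one_{[S_k=y]}\bigr]$. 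Absolute convergence of this sum follows from the bound on $\|Q\|_\infty$ established next, provided $m$ is large.

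The tail piece $T_{J_0} := \sum_{j\ge J_0} (-1)^{j}\E_x[\tau_0\,(Q^{j} h)(S_{\tau_0})]$ will be controlled by an operator-norm argument. For $z\in\partial_1 V_n$ the walk either stays in $V_n^c$ (contributing $0$ to $Qf$ because $\tau_1-1=0$) or steps into the parent in $V_n$ with probability $1/m$; in the latter case the conditional expectation of $\tau_1-1$ is the expected exit time starting one step inside, which by Lemma~\ref{lemma:bound_tau} is at most $C_1(m)$. Hence $\|Q\|_\infty \le C_1(m)/m$. Combining this with $\|h\|_\infty \le C_2(m)\,d(y,\partial_1 V_n)(m-1)^{-d(y,\partial_1 V_n)}$ from Lemma~\ref{lem:lemma2}, $\E_x[\tau_0]\le C_1(m)\,d(x,\partial_1 V_n)$, and summing the resulting geometric series in $C_1(m)/m$, yields exactly the second line of \eqref{eq:E_n_finer} (using the same argument after swapping $x$ and $y$ to obtain the $\max$ in the exponent).

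The delicate part is the finite head $\sum_{j=0}^{J_0-1}(-1)^j \E_x[\tau_0\,(Q^{j} h)(S_{\tau_0})]$, for which the crude bound from the tail analysis does not suffice; we need the extra factor $(m-1)^{-d}$. Unfolding $Q^{j}h$ via the Markov property represents each summand as an expectation over trajectories making exactly $j+1$ transitions between $\partial_1 V_n$ and $V_n$ before hitting $y$ during the last excursion. Because $\T_m$ is a tree and the unique path between any two vertices has length equal to their graph distance, the total graph distance traversed by such a trajectory is at least $d=d(x,y)$. Each excursion's contribution can be written as an integral of Green's-function factors; invoking Lemma~\ref{lem:G} in the form $G(u,v)\le C\,d(u,v)\,(m-1)^{-d(u,v)}$ together with the expected excursion-length bound $C_1(m)$ produces a product of escape-probability factors whose exponents telescope to $d$.

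The core combinatorial obstacle is therefore the enumeration step: counting the tuples of boundary-entry points for the $j+1$ excursions while summing the attendant polynomial factors from distances and Green's functions. I would organize the sum by distances $(d_0,\dots,d_j)$ of the successive entry points to $y$ subject to $d_0+\cdots+d_j \ge d$, absorb the exit distributions using $|\partial_0 V_n|$-type counts from Lemma~\ref{lem:C_k} restricted to the branches connecting $x$ and $y$, and then control the resulting multi-index convolution by a Vandermonde-type estimate. Carrying this out for each of the $J_0$ head terms gives, after bounding each $d_i$ by $d$, a factor $J_0\cdot (2J_0+1)^{4J_0+2}3^{4J_0^2}\bigl(\tfrac{4(m-1)}{m-2}\bigr)^{2J_0+1}\tfrac{m-1}{m-2}\, d^{2J_0+1}(m-1)^{-d}$, matching the first line of \eqref{eq:E_n_finer}. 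This combinatorial accounting, and in particular tracking exactly how the $d_i$'s interact with the branch structure on the tree, is the step where the largeness of $m$ is used and is the principal technical difficulty.
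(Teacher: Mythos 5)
Your tail analysis is sound and essentially reproduces the paper's part~(a) of Claim~\ref{claim:estimateaj}: you peel off excursions by the strong Markov property, use $\|Q\|_\infty\le C_1(m)/m$ (which for a vertex in $\partial_1 V_n$ follows from the $1/m$ probability of stepping into $V_n$ and Lemma~\ref{lemma:bound_tau}), combine with $\|h\|_\infty$ from Lemma~\ref{lem:lemma2} and $\E_x[\tau_0]\le C_1(m)d(x,\partial_1 V_n)$, and sum the geometric series. The Neumann-series identity $(I+Q)^{-1}=\sum_j(-1)^j Q^j$ is legitimate for large $m$ and is the same algebra the paper uses when passing from the Poisson-randomized sum to the $a_j$'s.

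The gap is in the head. You correctly identify that the crude tail bound gives no $(m-1)^{-d}$ factor and that one must track the distance $d=d(x,y)$, but the paragraph you devote to this is a plan, not a proof. The proposal to ``organize the sum by distances $(d_0,\dots,d_j)$ of the successive entry points,'' absorb exit distributions via $|\partial_0 V_n|$-type counts, and ``control the resulting multi-index convolution by a Vandermonde-type estimate'' is left entirely schematic, and you give no derivation of the very specific coefficients $(2j+1)^{4j+2}\,3^{4j^2}\bigl(\tfrac{4(m-1)}{m-2}\bigr)^{2j+1}$. Those constants do not plausibly fall out of a counting argument over boundary-entry tuples. The paper's actual route for part~(b) is quite different and much more tractable: after applying the strong Markov property at each $\tau_i$, one drops the excursion structure entirely and upper-bounds each $a_j$ by a \emph{free} Green's-function moment, $a_j\le\sum_{k\ge 0}k^{2j+1}p_k(x,y)$, where $p_k$ is the $k$-step transition probability on the infinite tree. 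This quantity is then expanded in falling factorials and expressed through the derivatives $g^{(1)}(1),\dots,g^{(2j+1)}(1)$ of the explicit generating function $\Gamma(x,y\,|\,\z)$ from Lemma~\ref{lem:G}, whose bounds are computed directly and give the $3^{(k-1)^2}(k-1)^{k-1}\bigl(\tfrac{4(m-1)}{m-2}\bigr)^k\,d^k\,(m-1)^{-d}$ factor and hence the stated constants. So the delicate step you flagged is precisely where your approach deviates from the paper's, and as written it does not constitute a proof: you would either need to carry the combinatorics through in detail (and match the constants), or follow the paper in reducing to moments of the transition kernel and using the closed-form generating function for the regular tree.
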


As the proof requires some lengthy computations we devote Section~\ref{section:lem:E_n_finer} to it. {Note that when we take $J_0=0$, the above bound improves \eqref{eq:E_n_bound} and we understand the constants better as well.}
In the proof of Theorem~\ref{thm:max_0} we shall use $J_0=0$ and $J_0=\log(d(x,y))$.

We know that $G_n(x,x)\to G(o,o)$ but we do not know if this convergence is uniform as the error term depends on the distance of $x$ from the boundary. However we see that for $m\ge 10$ we can bound $G_n(x,x)$ uniformly from below. 
\begin{lemma}
For $m\ge 10$ there is a positive constant $C_3(m)$ which converges to $1$ as $m\to\infty$  such that
\begin{align}\label{eq:var_lower_bound_0}
\inf_{x\in V_n} G_n(x,x) \ge C_3(m).
\end{align}

\end{lemma}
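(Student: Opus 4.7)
The plan is to start from the decomposition $G_n(x,x) = G(o,o) - \overline{E}_n(x,x) - E_n(x,x)$ given by~\eqref{eq:G_n split_2} together with the identity $G(x,x)=G(o,o)$ noted after Theorem~\ref{thm:inf_vol}, and then to show that both error terms are uniformly $O(1/m)$ in $x\in V_n$ (and, crucially, in $n$). Since $G(o,o)=(m-1)((m-1)^2+1)/(m-2)^3 \to 1$ as $m\to\infty$, this will yield $G_n(x,x)\ge G(o,o)/2$ uniformly for all sufficiently large $m$, so one can take $C(m):=G(o,o)/2$.

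For the first error term I would specialise~\eqref{eq:bar_E_n_bound} to $y=x$ and set $k:=d(x,\partial_1V_n)\ge 1$; the resulting bound is a linear combination of $k(m-1)^{-k}$ and $(m-1)^{-k}$ whose coefficients are bounded in $m$ (since $C_1(m),C_2(m)=O(1)$). A ratio computation $f(k+1)/f(k)=(k+1)/(k(m-1))\le 1$ for $k\ge 1$ shows that $f(k):=k(m-1)^{-k}$ is decreasing on $[1,\infty)$ for $m\ge 3$, so its supremum is attained at $k=1$ with value $1/(m-1)$; the same holds a fortiori for $(m-1)^{-k}$. Hence $\sup_{x\in V_n}\overline{E}_n(x,x)=O(1/m)$, uniformly in $n$.

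For the second error term I would invoke Lemma~\ref{lem:E_n_finer} with $y=x$, so that $d(x,y)=0$ makes the first summand in~\eqref{eq:E_n_finer} vanish, and choose $J_0=0$, which leaves
\[
|E_n(x,x)| \le \frac{C_1(m)\,C_2(m)}{1-C_1(m)/m}\, k^2(m-1)^{-k}.
\]
An analogous ratio argument shows that $g(k):=k^2(m-1)^{-k}$ is decreasing on $[1,\infty)$ whenever $(1+1/k)^2\le m-1$ holds uniformly in $k\ge 1$, in particular for $m\ge 5$; then $\sup_{k\ge 1} g(k)=1/(m-1)$. Using $C_1(m)/m\to 0$ and $C_1(m)C_2(m)=O(1)$, this gives $\sup_{x\in V_n}|E_n(x,x)|=O(1/m)$.

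Combining the two estimates produces the uniform bound $G_n(x,x)\ge G(o,o)-O(1/m)$, which exceeds $G(o,o)/2$ once $m$ is large enough, as required. The main (mild) subtlety is that both error terms are weakest exactly on the innermost boundary $\{x:d(x,\partial_1V_n)=1\}$, where the exponential decay factor is largest; the monotonicity of $k^\alpha(m-1)^{-k}$ on $k\ge 1$ for large $m$ reduces the worst case to $1/(m-1)$, and the large-$m$ hypothesis in the statement is exactly what makes this, and the prefactors $C_1(m),C_2(m),1/(1-C_1(m)/m)$, harmless compared to $G(o,o)\approx 1$.
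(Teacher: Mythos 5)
Your argument is correct, but it takes a slightly different route from the paper and ends up doing a bit more work. The paper stays with the decomposition~\eqref{eq:G_n split}, $G_n(x,x)=\overline{G}_n(x,x)-E_n(x,x)$, and observes that $\overline{G}_n(x,x)\ge 1$ for trivial reasons (the $k=0$ term of $\E_x\bigl[\sum_{k<\tau_0}(k+1)\one_{[S_k=x]}\bigr]$ already contributes $1$). After that, only one error term needs to be controlled, and the paper uses exactly your $J_0=0$ estimate from Lemma~\ref{lem:E_n_finer} together with the fact that $k^2(m-1)^{-k}\le 1/(m-1)$ for $k\ge 1$ and $m$ large; the conclusion $G_n(x,x)\ge 1-o_m(1)$ follows in one line. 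You instead start from~\eqref{eq:G_n split_2}, $G_n(x,x)=G(o,o)-\overline{E}_n(x,x)-E_n(x,x)$, so you must \emph{also} bound $\overline{E}_n(x,x)$ uniformly; you do this correctly by specialising~\eqref{eq:bar_E_n_bound} to $y=x$, showing $C_1(m),C_2(m)=O(1)$ and that $k^{\alpha}(m-1)^{-k}$ peaks at $k=1$. Both bounds, together with $G(o,o)\to 1$, give $G_n(x,x)\ge G(o,o)/2$ for large $m$. The extra step is harmless and the monotonicity-of-$k^{\alpha}(m-1)^{-k}$ checks are accurate, but the paper's choice of decomposition is more economical because it replaces the bound on $\overline{E}_n(x,x)$ (and the identity $G(x,x)=G(o,o)$) with the trivial observation $\overline{G}_n(x,x)\ge 1$.
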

\begin{proof}
Note that $\overline{G}_n(x,x)\ge 1$.
Taking $J_0=0$ in~\eqref{eq:E_n_finer} we get for $m\ge 5$
\begin{align*}
|E_n(x,x)| \le \frac{C_1(m) C_2(m)}{ \left(1-\frac{C_1(m)}{m}\right)}d(x,\partial_1V_n)^2 (m-1)^{-d(x,\partial_1V_n)}
 \le \frac{C_1(m) C_2(m)}{ (m-1)\left(1-\frac{C_1(m)}{m}\right)}.
\end{align*}
Therefore
\begin{align*}
G_n(x,x) \ge 1- \frac{C_1(m) C_2(m)}{ (m-1)\left(1-\frac{C_1(m)}{m}\right)} =:C_3(m).
\end{align*}
But by definition of $C_1(m)$ and $C_2(m)$ in \eqref{eq:C1} and \eqref{eq:C2} respectively, it follows 
\begin{align*}
\lim_{m\to\infty}\frac{C_1(m) C_2(m)}{ (m-1)\left(1-\frac{C_1(m)}{m}\right)} {=} 0.
\end{align*}
One can observe that the function $m \mapsto C_3(m)$ is an increasing function for $m\ge 5$ and for $m=9$ and $10$ we compute that $C_3(9)=-0.06 $ and $C_3(10)= 0.2$. Hence for $m\ge 10$ we have $G_n(x,x) \ge C_3(m)>0$.
\end{proof}

We now prove Theorem~\ref{thm:max_0}. In the proof we again use the comparison theorem of \cite{hj90}.
\begin{proof}[Proof of Theorem~\ref{thm:max_0}]
We set
\begin{align*}
 u_n(\theta):= A_n\theta + B_n,\,\,\theta\in\R.
\end{align*}
From the proof of Theorem~\ref{thm:max} we observe that in this case it suffices to prove 
\begin{align}\label{eq:main_0}
\sum_{x,y\in V_n, x\neq y}|\mathbf{Cov}\left(\one_{[\psi_x > u_n(\theta)]}, \one_{[\psi_y > u_n(\theta)]}\right)| \to 0
\end{align}
for all $\theta$. 

We define $R_n(x,y):= \E_n[\psi_x \psi_y]$. First we obtain a bound for $|R_n(x,y)|$. By using Lemma~\ref{lem:G}, Lemma~\ref{lem:E_n_finer} and~\eqref{eq:var_lower_bound_0} we obtain
\begin{align}\label{eq:R_n bound}
|R_n(x,y)| &= \frac{|G_n(x,y)|}{\sqrt{G_n(x,x)G_n(y,y)}}\nonumber\\
& \le \frac{1}{C_3(m)}\left[ G(x,y) + |E_n(x,y)| \right]\nonumber\\
& \le \frac{1}{C_3(m)} \left[  \frac{(d(x,y)+1) m(m-1)(m-2) - 2(m-1)}{(m-2)^3(m-1)^{d(x,y)}} \nonumber \right.\\
&\left.+ J_0(2J_0+1)^{4J_0+2} 3^{4J_0^2} \left(\frac{4(m-1)}{m-2}\right)^{2J_0+1} \frac{m-1}{m-2}d(x,y)^{2J_0+1} (m-1)^{-d(x,y)}\nonumber \right.\\
&+ \left.\frac{C_1(m) C_2(m)}{ \left(1-\frac{C_1(m)}{m}\right)}d(x,\partial_1V_n)d(y,\partial_1V_n) (m-1)^{-\max\{d(x,\partial_1V_n), d(y,\partial_1V_n)\}} \left(\frac{C_1(m)}{m}\right)^{J_0} \right],
\end{align}
where $0\le J_0 <\infty$. Taking $J_0=0$ in~\eqref{eq:R_n bound} we observe that for all distinct $x,y\in V_n$ 
\begin{align}\label{eq:above}
|R_n(x,y)| \le \frac{1}{C_3(m)}  \left[  \frac{2m(m-1)(m-2) - 2(m-1)}{(m-2)^3(m-1)} + \frac{C_1(m) C_2(m)}{ (m-1)\left(1-\frac{C_1(m)}{m}\right)} \right].
\end{align}
It is easy to check that the function $$m\mapsto \frac{1}{C_3(m)}  \left[  \frac{2m(m-1)(m-2) - 2(m-1)}{(m-2)^3(m-1)} + \frac{C_1(m) C_2(m)}{ (m-1)\left(1-\frac{C_1(m)}{m}\right)} \right]$$
is decreasing for $m\ge 10$. For $m=13$ and $14$ we evaluate the above expression as $1.13$ and $0.89$ respectively. Therefore we conclude that for all distinct $x,\,y\in V_n$ and for $m\ge 14$
\begin{align}\label{eq:R_n bound2}
|R_n(x,y)| & \le \eta 
\end{align}
for some fixed $\eta$ with $0<\eta<1$. We are now ready to prove~\eqref{eq:main_0}. Let $\theta\in\R$ be fixed. We will use the following bounds which are obtained from Lemma 3.4 of~\cite{hj90}.
\begin{lemma}
For $x,y\in V_n$ the following hold.
\begin{enumerate}
\item If $0 \le R_n(x,y) <1$,
\begin{align}0\le \mathbf{Cov}\left(\one_{[\psi_x > u_n(\theta)]}, \one_{[\psi_y > u_n(\theta)]}\right) \le C N^{-\frac{2}{1+R_n(x,y)}} \label{eq:bound1_0}.\end{align}
\item If $0 \le R_n(x,y) \le 1$,
\begin{align}0\le \mathbf{Cov}\left(\one_{[\psi_x > u_n(\theta)]}, \one_{[\psi_y > u_n(\theta)]}\right) \le C R_n(x,y) N^{-2}\log N \e^{2R_n(x,y)\log N}.\label{eq:bound2_0}\end{align}
\item If $-1 \le R_n(x,y) <0$,
\begin{align}0\ge \mathbf{Cov}\left(\one_{[\psi_x > u_n(\theta)]}, \one_{[\psi_y > u_n(\theta)]}\right) \ge -C N^{-2}\label{eq:bound3_0}.\end{align}
\item If $-1\le R_n(x,y) \le 0$,
\begin{align}0\ge \mathbf{Cov}\left(\one_{[\psi_x > u_n(\theta)]}, \one_{[\psi_y > u_n(\theta)]}\right) \ge -C |R_n(x,y)| N^{-2}\log N.\label{eq:bound4_0}\end{align}
 \end{enumerate}
\end{lemma}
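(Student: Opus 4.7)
The plan is to obtain all four inequalities as direct consequences of Lemma 3.4 of \cite{hj90} applied to the standardized bivariate Gaussian $(\psi_x, \psi_y)$, combined with the sharp asymptotic of the threshold $u_n(\theta)$ in terms of $N$. Since $\psi_x$ and $\psi_y$ are each centered Gaussian with unit variance (by the normalization $\psi_x = \vr_x/\sqrt{G_n(x,x)}$) and correlation $R_n(x,y) = \E_n[\psi_x \psi_y]$, the hypotheses of that lemma are satisfied whenever $|R_n(x,y)| < 1$; the endpoint cases $\pm 1$ correspond to $\psi_x = \pm \psi_y$ almost surely and are trivial.

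The starting point is the Plackett--Slepian derivative formula
\[
\mathbf{Cov}\!\left(\one_{[\psi_x > u]}, \one_{[\psi_y > u]}\right) = \int_0^{R_n(x,y)} \frac{1}{2\pi\sqrt{1-s^2}}\exp\!\left(-\frac{u^2}{1+s}\right) ds,
\]
valid for any $u \in \R$. The sign of the integrand is positive, so the covariance has the same sign as $R_n(x,y)$, which explains the sign conclusions in all four parts of the lemma.

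For the positive-correlation bounds I set $u = u_n(\theta)$ and use the asymptotic
\[
u_n(\theta)^2 = 2\log N - \log\log N - \log(4\pi) + 2\theta + o(1).
\]
The first bound \eqref{eq:bound1_0} follows by upper-bounding the integrand on $[0, R_n(x,y)]$ by its value at $s = R_n(x,y)$ and multiplying by the interval length, producing an exponential factor $\exp(-u_n(\theta)^2/(1+R_n(x,y))) \asymp N^{-2/(1+R_n(x,y))}$ after absorbing polylogarithmic terms into $C$; the prefactor $R_n(x,y)/\sqrt{1 - R_n(x,y)^2}$ is bounded because $R_n(x,y)$ stays uniformly away from $1$ by \eqref{eq:R_n bound2}. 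For \eqref{eq:bound2_0}, one uses the elementary inequality $1/(1+s) \ge 1-s$ to write $\exp(-u^2/(1+s)) \le \exp(-u^2)\exp(u^2 s)$, bounds the integrand on $[0, R_n(x,y)]$ by its value at $s = R_n(x,y)$, and multiplies by length $R_n(x,y)$; substituting $\exp(-u_n^2) \asymp N^{-2}\log N$ and $\exp(u_n^2 R_n(x,y)) = \exp(2 R_n(x,y)\log N)(1 + o(1))$ yields the stated form.

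For the negative-correlation bounds, \eqref{eq:bound3_0} follows from the trivial inequality $\mathbf{Cov} \ge -\prob(\psi_o > u_n(\theta))^2$ combined with the Mills-ratio asymptotic $\prob(\psi_o > u_n(\theta)) \sim e^{-\theta}/N$. The finer \eqref{eq:bound4_0} is obtained by applying the integral identity in reverse: with $R_n(x,y) \le 0$, the integrand on $[R_n(x,y), 0]$ is bounded above by its value $\exp(-u^2)/(2\pi)$ at $s = 0$, giving $|\mathbf{Cov}| \le |R_n(x,y)| \exp(-u_n^2)/(2\pi) \le C |R_n(x,y)| N^{-2}\log N$. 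The main bookkeeping obstacle is tracking how the polylogarithmic correction in the asymptotic of $u_n(\theta)$ moves between the exponent and the multiplicative constant; this is routine since $\theta$ is fixed and $R_n(x,y)$ is uniformly controlled by \eqref{eq:R_n bound2}.
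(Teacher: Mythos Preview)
Your proposal is correct and matches the paper's approach: the paper does not give an independent proof but simply invokes Lemma~3.4 of \cite{hj90}, and what you have written is precisely a sketch of that lemma's proof via the Plackett--Slepian integral representation. One small remark on part~(4): your claim that the integrand on $[R_n(x,y),0]$ is bounded by its value at $s=0$ is not immediate since $1/\sqrt{1-s^2}$ increases as $s$ moves away from $0$; however, for $u=u_n(\theta)$ large the exponential factor $\exp(-u^2/(1+s))$ dominates and the integrand is indeed monotone on $(-1,0]$, so the step goes through (alternatively, bound $\int_{R}^{0}(1-s^2)^{-1/2}\,ds=|\arcsin R|\le \tfrac{\pi}{2}|R|$ and use $\exp(-u^2/(1+s))\le\exp(-u^2)$ for $s\le 0$).
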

We write
\begin{align*}
\sum_{x,y\in V_n, x\neq y}&|\mathbf{Cov}\left(\one_{[\psi_x > u_n(\theta)]}, \one_{[\psi_y > u_n(\theta)]}\right)| \\
&=\sum_{x,y\in V_n, x\neq y}|\mathbf{Cov}\left(\one_{[\psi_x > u_n(\theta)]}, \one_{[\psi_y > u_n(\theta)]}\right)| \one_{[0\le R_n(x,y) \le 1]}\\
&+ \sum_{x,y\in V_n, x\neq y}|\mathbf{Cov}\left(\one_{[\psi_x > u_n(\theta)]}, \one_{[\psi_y > u_n(\theta)]}\right)| \one_{[-1\le R_n(x,y) < 0]}=: T_1 + T_2.
\end{align*}
We show that both $T_1$ and $T_2$ go to zero as $n$ tends to infinity. First we consider $T_1$. Let us choose $\eps$ such that $0 <\eps <(1-\eta)/(1+\eta) <1$, where $\eta$ is the same as in~\eqref{eq:R_n bound2}. We now split $T_1$ as
\begin{align}\label{eq:T_1_0}
T_1&=\sum_{k=1}^{2n} \sum_{x,y\in V_n, d(x,y)=k}|\mathbf{Cov}\left(\one_{[\psi_x > u_n(\theta)]}, \one_{[\psi_y > u_n(\theta)]}\right)|\one_{[0\le R_n(x,y) \le 1]}\nonumber\\
& = \sum_{k=1}^{\floor{n\eps}} \sum_{x,y\in V_n, d(x,y)=k}|\mathbf{Cov}\left(\one_{[\psi_x > u_n(\theta)]}, \one_{[\psi_y > u_n(\theta)]}\right)|\one_{[0\le R_n(x,y) \le 1]} \nonumber\\
&\qquad+ \sum_{k=\floor{n\eps}+1}^{2n} \sum_{x,y\in V_n, d(x,y)=k}|\mathbf{Cov}\left(\one_{[\psi_x > u_n(\theta)]}, \one_{[\psi_y > u_n(\theta)]}\right)|\one_{[0\le R_n(x,y) \le 1]}.
\end{align}
Using bound~\eqref{eq:bound1_0} with~\eqref{eq:R_n bound2} and Lemma~\ref{lem:C_k} we observe that the first term of~\eqref{eq:T_1_0} is bounded by 
\begin{align*}
C \sum_{k=1}^{\floor{n\eps}}  (m-1)^{n+\floor{\frac{k}2}} N^{-\frac2{1+\eta}}
\le C (m-1)^{n+\floor{\frac{n\eps}2} -\frac{2n}{1+\eta}}
\end{align*} 
which goes to zero as $n$ tends to infinity by the choice of $\eps$.

For the second term in~\eqref{eq:T_1_0} we use the bound~\eqref{eq:bound2_0} together with the bound~\eqref{eq:R_n bound} with $J_0=\log(d(x,y))$. We get that the second term is bounded by
\begin{align*}
&\sum_{k=\floor{n\eps}+1}^{2n} \sum_{x,y\in V_n, d(x,y)=k} C R_n(x,y) N^{-2}\log N \e^{2R_n(x,y)\log N} \\
& \le C \sum_{k=\floor{n\eps}+1}^{2n} (m-1)^{n+\floor{\frac{k}2}} N^{-2}\log N \left(C\frac{k}{(m-1)^k} + B_k\right) \exp\left[2\log N\left(C\frac{k}{(m-1)^k} + B_k\right)\right],
\end{align*}
where 
\begin{align*}
B_k:= \log k (2\log k+1)^{4\log k+2} 3^{4(\log k)^2} \left(\frac{4(m-1)}{m-2}\right)^{2\log k+1} \frac{m-1}{m-2}k^{2\log k+1} (m-1)^{-k} + C \left(\frac{C_1(m)}{m}\right)^{\log k} .
\end{align*}
We now use the following fact for $B_k$ whose proof is given in the end of this section.
\begin{claim}\label{claim:B_k}
For large $n$ and for all $k\ge \floor{n\eps}$
\begin{align*}
B_k \le B_{\floor{n\eps}} .
\end{align*}
\end{claim}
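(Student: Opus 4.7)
The plan is to show that $B_k$ is eventually (in $k$) monotonically decreasing, with the monotonicity threshold depending only on $m$, so that once $n$ is so large that $\lfloor n\eps\rfloor$ exceeds this threshold, the inequality $B_k\le B_{\lfloor n\eps\rfloor}$ follows for every $k\ge\lfloor n\eps\rfloor$. I would split $B_k=B_k^{(1)}+B_k^{(2)}$, where
\begin{align*}
B_k^{(1)}&:=\log k\,(2\log k+1)^{4\log k+2}\,3^{4(\log k)^2}\left(\tfrac{4(m-1)}{m-2}\right)^{2\log k+1}\tfrac{m-1}{m-2}\,k^{2\log k+1}(m-1)^{-k},\\
B_k^{(2)}&:=C\left(\tfrac{C_1(m)}{m}\right)^{\log k},
\end{align*}
and treat the two terms separately.

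For $B_k^{(2)}$, I would first use the explicit form~\eqref{eq:C1} of $C_1(m)$ to check that $C_1(m)/m\to 0$ as $m\to\infty$; hence, for all $m$ sufficiently large, $\log(C_1(m)/m)<0$. Since $B_k^{(2)}=C\exp(\log k\cdot\log(C_1(m)/m))$, this gives monotonicity of $B_k^{(2)}$ in $k$ on all of $[1,\infty)$ once $m$ is large. This part is purely algebraic and poses no obstacle.

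For $B_k^{(1)}$, I would take logarithms and track the growth in $k$ with $m$ fixed:
\begin{align*}
\log B_k^{(1)}=-k\log(m-1)+(4\log 3+2)(\log k)^2+O(\log k\,\log\log k)+O_m(1).
\end{align*}
Differentiating (or passing to ratios $B_{k+1}^{(1)}/B_k^{(1)}$), the contribution of the exponentially decaying factor $(m-1)^{-k}$ gives a uniformly negative term $-\log(m-1)$, while every other term is of order $(\log k)/k$ or smaller. Hence there exists a threshold $k_0=k_0(m)$, depending only on $m$, such that $B_k^{(1)}$ is strictly decreasing on $[k_0,\infty)$. The main (minor) obstacle here is just to keep track of the many polylogarithmic factors and verify that their combined derivative is genuinely dominated by $-\log(m-1)$; this is routine but deserves writing out carefully.

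Putting the two pieces together, both $B_k^{(1)}$ and $B_k^{(2)}$ are decreasing in $k$ for all $k\ge k_0(m)$. Since $\eps>0$ is fixed, for all $n$ large enough we have $\lfloor n\eps\rfloor\ge k_0(m)$, and then for every $k$ in the range $[\lfloor n\eps\rfloor,2n]$ used in~\eqref{eq:T_1_0}, monotonicity gives $B_k\le B_{\lfloor n\eps\rfloor}$, which is the desired claim.
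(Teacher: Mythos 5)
Your proposal is correct and follows essentially the same route as the paper: define a continuous surrogate $F(t)$ for $B_t$, show that its derivative (equivalently, the log-ratio) is eventually negative because the $-t\log(m-1)$ term from the factor $(m-1)^{-t}$ dominates all polylogarithmic contributions, and then invoke that $\lfloor n\eps\rfloor$ exceeds the monotonicity threshold once $n$ is large. The paper's proof simply carries out the differentiation of $F(t)$ explicitly rather than arguing at the level of growth orders, but the decomposition into the two exponential summands and the dominance of the linear-in-$t$ term are exactly the same.
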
 
Using the above claim we get that the second term of~\eqref{eq:T_1_0} is bounded by
 \begin{align*}
 C n \left(\frac{\floor{n\eps}}{(m-1)^{\floor{n\eps}}} + B_{\floor{n\eps}}\right) \exp\left[Cn\left(\frac{\floor{n\eps}}{(m-1)^{\floor{n\eps}}} + B_{\floor{n\eps}}\right)\right].
\end{align*}
Note that to show that the above bound goes to zero as $n$ tends to infinity it is enough to prove $nB_{\floor{n\eps}} \to 0$ as $n\to\infty$. We have 
\begin{align*}
nB_{\floor{n\eps}} &= \exp\left[ \log n +\log\log \floor{n\eps}+ (4\log \floor{n\eps}+2)\log(2\log \floor{n\eps}+1) + (4(\log \floor{n\eps})^2) \log 3\right.\\
& \left.+ (2\log \floor{n\eps}+1)\log\left(\frac{4(m-1)}{m-2}\right) + \log(\frac{m-1}{m-2}) + (2\log \floor{n\eps}+1)\log \floor{n\eps} -\floor{n\eps}\log(m-1)\right] \\
&+ \exp\left[ \log n + \log C + \log \floor{n\eps} \log\left(\frac{C_1(m)}{m}\right)\right]\overset{n\to\infty}{\to}0.
\end{align*}

Here the magnitude of $m$ is used to get that \begin{equation}\exp\left[ \log n + \log C + \log \floor{n\eps} \log\left(\frac{C_1(m)}{m}\right)\right]\overset{n\to\infty}{\to}0.\label{eq:conv_long}\end{equation}
We observe that the function $m \mapsto \log(C_1(m)/m)$ is a decreasing function and moreover that $\log(C_1(9)/9)=-1.08$. Hence~\eqref{eq:conv_long} holds for all $m\ge 9$.
Thus we proved that $T_1\to 0$ as $n\to\infty$. 

Next we consider $T_2$. For $T_2$ we use the bounds~\eqref{eq:bound3_0},~\eqref{eq:bound4_0} together with the bound~\eqref{eq:R_n bound} with $J_0=\log(d(x,y))$ to get
\begin{align*}
T_2&=\sum_{k=1}^{2n} \sum_{x,y\in V_n, d(x,y)=k}|\mathbf{Cov}\left(\one_{[\psi_x > u_n(\theta)]}, \one_{[\psi_y > u_n(\theta)]}\right)|\one_{[-1\le R_n(x,y) < 0]}\\
& = \sum_{k=1}^{n-1} \sum_{x,y\in V_n, d(x,y)=k}|\mathbf{Cov}\left(\one_{[\psi_x > u_n(\theta)]}, \one_{[\psi_y > u_n(\theta)]}\right)|\one_{[-1\le R_n(x,y) < 0]} \nonumber\\
&\qquad+ \sum_{k=n}^{2n} \sum_{x,y\in V_n, d(x,y)=k}|\mathbf{Cov}\left(\one_{[\psi_x > u_n(\theta)]}, \one_{[\psi_y > u_n(\theta)]}\right)|\one_{[-1\le R_n(x,y) < 0]}\\
& \le C \sum_{k=1}^{n-1} (m-1)^{n+\floor{\frac{k}2}-2n} + C \sum_{k=n}^{2n} (m-1)^{n+\floor{\frac{k}2}-2n} n \left(C\frac{k}{(m-1)^k} + B_k\right) .
\end{align*}
Clearly, the first part in the above bound goes to zero as $n$ tends to infinity. For the second part we use the fact that $B_k \le B_n$ for all $k\ge n$ for large $n$ which can be proved similarly as the Claim~\ref{claim:B_k}. Then we get that the second part is bounded by $C n(\frac{n}{(m-1)^n} + B_n)$ which can be shown to go to zero as $n\to\infty$ similarly as in the case of $T_1$. Thus $T_2\to 0$ as $n\to\infty$. This completes the proof of~\eqref{eq:main_0}.
\end{proof}

We now prove Claim~\ref{claim:B_k}. 
\begin{proof}[Proof of Claim~\ref{claim:B_k}]
We define for $t\ge 0$
\begin{align*}
F(t)&:= \log t (2\log t+1)^{4\log t+2} 3^{4(\log t)^2} \left(\frac{4(m-1)}{m-2}\right)^{2\log t+1} \frac{m-1}{m-2} t^{2\log t+1} (m-1)^{-t} \\
&\qquad\qquad+ C \left(\frac{C_1(m)}{m}\right)^{\log t} \\
&= \exp\left[ \log\log t+ (4\log t+2)\log(2\log t+1) + (4(\log t)^2) \log 3 \right.\\
&\left.\qquad+ (2\log t+1)\log\left(\frac{4(m-1)}{m-2}\right) + \log(\frac{m-1}{m-2}) + (2\log t+1)\log t -t\log(m-1)\right] \\
&\qquad+ \exp\left[ \log C + \log t \log\left(\frac{C_1(m)}{m}\right)\right].
\end{align*}
Then
\begin{align*}
F{'}(t) &= \exp\left[ \log\log t+ (4\log t+2)\log(2\log t+1) + (4(\log t)^2) \log 3 \right.\\
&\left.\qquad+ (2\log t+1)\log\left(\frac{4(m-1)}{m-2}\right) + \log(\frac{m-1}{m-2}) + (2\log t+1)\log t -t\log(m-1)\right] \\
&\qquad\left[ \frac1{t\log t} + \frac4{t} \log(2\log t+1) + \frac{2(4\log t+2)}{t(2\log t+1)} + \frac{8\log 3 \log t}{t} + \frac{2}{t} \log\left(\frac{4(m-1)}{m-2}\right)\right.\\ 
&\left.\qquad+ \frac{2\log t}{t} + \frac{2\log t +1}{t} -\log(m-1) \right]\\
& \qquad+ \exp\left[ \log C + \log t \log\left(\frac{C_1(m)}{m}\right)\right] \left[ \frac1{t} \log\left(\frac{C_1(m)}{m}\right)\right].
\end{align*}
This shows that $F{'}(t) <0$ for all $t \ge t_0$ for some $t_0$. Hence $F$ is decreasing on $[t_0, \infty)$.
Therefore $F(k)= B_k\le F(\floor{n\eps})=B_{\floor{n\eps}}$ for all $k\ge \floor{n\eps}$ for large $n$.
\end{proof}

\section{Finer estimate on the error and proof of Lemma~\ref{lem:E_n_finer}}\label{section:lem:E_n_finer}
In this section we will derive a finer estimate on the error $E_n(x,y)$ which was crucially used to prove Theorem~\ref{thm:max_0}. We look at each individual term of the series which appears in $E_n(x,y)$ and find a better bound than what we have before. 
\begin{proof}[Proof of Lemma~\ref{lem:E_n_finer}]
Recall from~\eqref{error:rep}
\begin{align*}
E_n(x,y)=\lim\limits_{t\to\infty}\E_x\left[\sum\limits_{j=1}^{\eta_t}(-1)^{j-1}M_{j-1}\sum\limits_{k=\tau_{j-1}}^{\tau_j-1}(k-\tau_{j-1})\one_{[S_k=y]} \right].
\end{align*}
Now conditioning on $\eta_t$ we get
\begin{align}\label{eq:E_n_poisson}
|E_n(x,y)|&=\left | \lim\limits_{t\to\infty}\sum_{\ell=0}^\infty \e^{-t}\frac{t^\ell}{\ell!}\sum\limits_{j=1}^{\ell}(-1)^{j-1}\E_x\left[M_{j-1}\sum\limits_{k=\tau_{j-1}}^{\tau_j-1}(k-\tau_{j-1})\one_{[S_k=y]} \right]\right |\nonumber\\
&  \le \lim\limits_{t\to\infty}\sum_{\ell=0}^\infty \e^{-t}\frac{t^\ell}{\ell!}\sum\limits_{j=1}^{\ell} a_j,
\end{align}
where
\begin{align*}
a_j:= \E_x\left[M_{j-1}\sum\limits_{k=\tau_{j-1}}^{\tau_j-1}(k-\tau_{j-1})\one_{[S_k=y]} \right],\,\,\,j\ge 1.
\end{align*}
We now obtain bounds for $a_j$. First we bound each of the $a_j$'s in terms of the distance of $x$ and $y$ from the boundary of $V_n$ and then we bound the $a_j$'s in terms of the distance $d(x,y)$.

\begin{claim}\label{claim:estimateaj}
For all $j\ge 1$ the following two estimates hold.
\begin{itemize}[leftmargin=2em]
\item[(a)] {\bf Bound in terms of distance from boundary.} 
\begin{align}\label{eq:a_j_1}
a_j \le C_1(m) C_2(m) d(x,\partial_1V_n)d(y,\partial_1V_n) (m-1)^{-d(y,\partial_1V_n)} \left(\frac{C_1(m)}{m}\right)^{j-1}.
\end{align}
\item[(b)] {\bf Bound in terms of $d(x,y)$.}
\begin{align}\label{eq:a_j_2}
a_j&\le (2j+1)^{4j+2} 3^{4j^2} \left(\frac{4(m-1)}{m-2}\right)^{2j+1} \frac{m-1}{m-2}(d(x,y))^{2j+1} (m-1)^{-d(x,y)}.
\end{align}
\end{itemize}
\end{claim}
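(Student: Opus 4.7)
The plan is to prove the two estimates by quite different strategies: part~(a) via an iterated strong Markov decomposition that tracks how the walker approaches and re-approaches $\partial_1V_n$, and part~(b) via a path-sum bound that replaces the excursion structure by simple random walk Green functions and then exploits the tree geometry.

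For part~(a), I would iterate the strong Markov property at the stopping times $\tau_{j-1}, \tau_{j-2}, \ldots, \tau_0$. Applying it first at $\tau_{j-1}$ gives, for $z = S_{\tau_{j-1}} \in V_n^c$, the inner expectation $\E_z\!\left[\sum_{k=0}^{\tau_1'-1}k\,\one_{[S_k=y]}\right]$, which is controlled by Lemma~\ref{lem:lemma2}; that lemma extends trivially to arbitrary $z \in V_n^c$ because for $z \notin \partial_1V_n$ the walk stays in $V_n^c$ at the next step and the sum vanishes (since $y \in V_n$). This produces the factor $C_2(m)\,d(y,\partial_1V_n)(m-1)^{-d(y,\partial_1V_n)}$. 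Next I would show that for any $z \in V_n^c$,
\begin{equation*}
\E_z[\tau_1' - 1] \le \frac{C_1(m)}{m},
\end{equation*}
since $\tau_1' - 1$ vanishes unless $z \in \partial_1V_n$, in which case with probability $1/m$ the walk enters $V_n$ at a vertex one step from $\partial_1V_n$ whose expected exit time is bounded by $C_1(m)$ by Lemma~\ref{lemma:bound_tau}. Iterating at $\tau_{j-2}, \ldots, \tau_1$ contributes a total factor $(C_1(m)/m)^{j-1}$. The outermost $\E_x[\tau_0]$ is bounded by $C_1(m)\,d(x,\partial_1V_n)$ using Lemma~\ref{lemma:bound_tau} again, and multiplying the three ingredients yields~\eqref{eq:a_j_1}.

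For part~(b), I would abandon the excursion structure and bound $a_j$ by a sum over paths. Writing each factor in the product $M_{j-1}(k-\tau_{j-1})$ as the cardinality of a set of intermediate times, one obtains
\begin{equation*}
M_{j-1}(k-\tau_{j-1})\one_{[\tau_{j-1}<k<\tau_j]} = \sum_{\substack{0 \le u_0 < \tau_0 < u_1 < \tau_1 < \cdots \\ \cdots < u_{j-1} < \tau_{j-1} < u_j \le k}} 1.
\end{equation*}
Dropping the exact excursion-count constraints (an upper bound) leaves the requirement that $S_{u_i} \in V_n$ for $i=0,\dots,j$ and $S_k = y$. Summing over $k \ge u_j$ and over the intermediate times by the Markov property at each $u_i$ yields
\begin{equation*}
a_j \;\le\; \sum_{z_0,\dots,z_j \in V_n}\Gamma(x,z_0\mid 1)\,\Gamma(z_0,z_1\mid 1)\cdots\Gamma(z_{j-1},z_j\mid 1)\,\Gamma(z_j,y\mid 1),
\end{equation*}
where by~\eqref{greensrw:tree} the SRW Green function satisfies $\Gamma(u,v\mid 1) \le \frac{m-1}{m-2}(m-1)^{-d(u,v)}$. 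The triangle inequality immediately factors out $(m-1)^{-d}$, and the remaining task is to bound the finite combinatorial sum over the $j+1$ intermediate vertices.

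The main obstacle will be carrying out this last combinatorial estimate in part~(b): bounding the product over $j+1$ intermediate vertices in $V_n$ uniformly in $n$ and extracting the precise polynomial factor $d^{2j+1}$ with the stated super-exponential-in-$j$ prefactor $(2j+1)^{4j+2}\,3^{4j^2}(4(m-1)/(m-2))^{2j+1}$. The uniformity in $n$ is rescued by the fact that on $\T_m$ the branching rate $(m-1)$ per sphere exactly cancels the decay $(m-1)^{-d(\cdot,\cdot)}$, so only polynomial factors survive; the explicit constants then arise from enumerating, at each prescribed excess distance, the admissible ways the intermediate vertices $z_0,\dots,z_j$ may deviate from the geodesic $[x,y]$, splitting them by depth of detour and summing the resulting geometric series. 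This careful bookkeeping of detours on the tree, and the resulting matching of the combinatorial constants, is the technical heart of the claim.
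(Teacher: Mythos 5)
For part~(a) your argument is the same as the paper's: iterate the strong Markov property at $\tau_0,\dots,\tau_{j-1}$, apply Lemma~\ref{lem:lemma2} to the innermost excursion sum (using the observation that it vanishes trivially off $\partial_1 V_n$), bound each intermediate factor $\E_{z}[\tau_1-1]$ by $C_1(m)/m$ via the one-step-in-with-probability-$1/m$ argument, and finish with $\E_x[\tau_0]\le C_1(m)\,d(x,\partial_1V_n)$ from Lemma~\ref{lemma:bound_tau}. That part is correct and matches the paper.

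For part~(b) you take a genuinely different route, and this is where the gap lies. The paper does not count tuples of intermediate times; it dominates each stopping-time factor $\tau_i\one_{[S_{\tau_i}=z]}$ by the unrestricted weighted local time $\sum_\ell\ell\,p_\ell(\cdot,z)$, convolves to obtain $a_j\le\sum_k k^{2j+1}p_k(x,y)$, expands $k^{2j+1}$ in falling factorials, and then invokes the explicit derivative estimate $g^{(i)}(1)\le 3^{(i-1)^2}(i-1)^{i-1}\bigl(\tfrac{4(m-1)}{m-2}\bigr)^i\tfrac{m-1}{m-2}d^i(m-1)^{-d}$ proved in Subsection~\ref{subsec:derivative}. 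Your alternative — expanding $M_{j-1}(k-\tau_{j-1})$ as a count of tuples $(u_0,\dots,u_j)$, dropping the interleaving constraints, and reducing to the iterated Green-function sum $\sum_{z_0,\dots,z_j}\Gamma(x,z_0\mid 1)\cdots\Gamma(z_j,y\mid 1)$ — is a valid and in fact tighter reduction: counting tuples gives $\binom{k+1}{j+1}p_k(x,y)$, of degree $j+1$ in $k$, rather than the paper's degree $2j+1$, so if completed it would imply the claimed bound. But the decisive step, enumerating the intermediate vertices by excess distance to the geodesic on the tree and extracting explicit $j$- and $d$-dependent constants, is exactly the work the paper's derivative lemma replaces, and you leave it as a sketch, yourself calling it ``the technical heart of the claim.'' As written the proposal therefore proves only part~(a); for part~(b) it identifies a plausible alternative strategy but does not carry out the estimate that part~(b) actually requires.
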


To obtain a finer estimate on $|E_n(x,y)|$ let us fix $J_0\in [0,\infty)$ with the notion that when $J_0=0$, the sum $\sum_{j=1}^{J_0}$ is $0$. Using the bounds~\eqref{eq:a_j_1} and~\eqref{eq:a_j_2} we have
\begin{align*}
\sum\limits_{j=1}^{\ell} a_j &\le \sum\limits_{j=1}^{\infty} a_j = \sum\limits_{j=1}^{J_0} a_j + \sum\limits_{j=J_0+1}^{\infty} a_j\\
& \le \sum\limits_{j=1}^{J_0}  (2j+1)^{4j+2} 3^{4j^2} \left(\frac{4(m-1)}{m-2}\right)^{2j+1} \frac{m-1}{m-2}(d(x,y))^{2j+1} (m-1)^{-d(x,y)} \\
&\qquad+\sum\limits_{j=J_0+1}^{\infty} C_1(m) C_2(m) d(x,\partial_1V_n)d(y,\partial_1V_n) (m-1)^{-d(y,\partial_1V_n)} \left(\frac{C_1(m)}{m}\right)^{j-1}\\
& \le J_0(2J_0+1)^{4J_0+2} 3^{4J_0^2} \left(\frac{4(m-1)}{m-2}\right)^{2J_0+1} \frac{m-1}{m-2}(d(x,y))^{2J_0+1} (m-1)^{-d(x,y)}\\
&+ C_1(m) C_2(m) \left(1-\frac{C_1(m)}{m}\right)^{-1}d(x,\partial_1V_n)d(y,\partial_1V_n) (m-1)^{-d(y,\partial_1V_n)} \left(\frac{C_1(m)}{m}\right)^{J_0}.
\end{align*}
Here we have used the fact that $C_1(m)/m <1$ for $m \ge 5$. Indeed, one can observe that the function $m \mapsto C_1(m)/m$ is a decreasing function and for $m=4$ and $5$ we compute that $C_1(4)/4=1.39$ and $C_1(5)/5 = 0.87$.

Now from~\eqref{eq:E_n_poisson} it follows that
\begin{align*}
|E_n(x,y)| &\le J_0(2J_0+1)^{4J_0+2} 3^{4J_0^2} \left(\frac{4(m-1)}{m-2}\right)^{2J_0+1} \frac{m-1}{m-2}(d(x,y))^{2J_0+1} (m-1)^{-d(x,y)}\\
& + C_1(m) C_2(m) \left(1-\frac{C_1(m)}{m}\right)^{-1}d(x,\partial_1V_n)d(y,\partial_1V_n) (m-1)^{-d(y,\partial_1V_n)} \left(\frac{C_1(m)}{m}\right)^{J_0}.
\end{align*}
From symmetry we conclude that~\eqref{eq:E_n_finer} holds. We are now left to prove Claim~\ref{claim:estimateaj}.
 \paragraph{\bf Proof of Claim~\ref{claim:estimateaj}}
First we prove part (a). The proof involves the successive use of the strong Markov property. {We have
\begin{align*}
a_j&= \E_x\left[M_{j-1}\sum\limits_{k=\tau_{j-1}}^{\tau_j-1}(k-\tau_{j-1})\one_{[S_k=y]} \right]\\
&= \E_x\left[ \tau_0 \E_{S_{\tau_0}}\left[\prod_{i=1}^{j-1} (\tau_i -\tau_{i-1}-1) \sum\limits_{k=\tau_{j-1}}^{\tau_j-1}(k-\tau_{j-1})\one_{[S_k=y]} \right] \right] \\
&= \E_x\left[ \tau_0 \E_{S_{\tau_0}}\left[(\tau_1-1) \E_{S_{\tau_1}}\left[\prod_{i=1}^{j-2} (\tau_i -\tau_{i-1}-1) \sum\limits_{k=\tau_{j-2}}^{\tau_{j-1}-1}(k-\tau_{j-2})\one_{[S_k=y]} \right] \right]\right] .
\end{align*}
Iteratively using the strong Markov property we obtain
\begin{align}\label{eq:manybrackets}
a_j= \E_x\left[ \tau_0\underbrace{ \E_{S_{\tau_0}}\left[(\tau_1-1) \E_{S_{\tau_1}} \left[(\tau_1-1) \E_{S_{\tau_1}}\left[(\tau_1-1) \ldots \E_{S_{\tau_1}}\left[(\tau_1-1)\E_{S_{\tau_1}}\left[\sum\limits_{k=0}^{\tau_{1}-1}k\one_{[S_k=y]} \right] \right]\right]\right]\right] }_{j- \text{ many expectations}}\right] .
\end{align}
Note that for any $z\in\partial_1 V_n$
\begin{align*}
\E_{z}\left[\tau_1-1\right]&=  \left[\E_z[\tau_1-1 | S_1\in V_n] \prob_z(S_1\in V_n) + \E_z[\tau_1-1 | S_1\in V_n^c] \prob_z(S_1\in V_n^c) \right]\nonumber\\
& = \frac1{m} \E_z[\tau_1-1 | S_1\in V_n]\overset{\eqref{eq:bound_tau}}{\le} \frac{C_1(m)}{m}. \label{eq:bound_m_tau}
\end{align*}
This together with Lemma~\ref{lem:lemma2} and Lemma~\ref{lemma:bound_tau} gives the bound~\eqref{eq:a_j_1}.
}

\paragraph{Part (b)}

We obtain a bound for $a_j$ in terms of the distance between $x$ and $y$. Let $p_k(z, w)= \prob_z[ S_k=w]$ be the $k$-step transition probability. We show it in two steps. First we show
\begin{equation}\label{eq:bound_a_j}
a_j \le \sum\limits_{k=0}^\infty k^{2j+1} p_{k}(x,y)
\end{equation}
and then we express $\sum\limits_{k=0}^\infty k^{2j+1} p_{k}(x,y)$ in terms of the derivatives of $g(\z)=\Gamma(x,y|\z)=\sum_{k=0}^\infty \prob_x\left[ S_k=y\right]\z^k$. We explicitly compute these derivatives in Subsection \ref{subsec:derivative}.  

{
We have from~\eqref{eq:manybrackets} that
\begin{align*}
a_j&= \E_x\left[ \tau_0\underbrace{ \E_{S_{\tau_0}}\left[(\tau_1-1) \E_{S_{\tau_1}} \left[(\tau_1-1) \E_{S_{\tau_1}}\left[(\tau_1-1) \ldots \E_{S_{\tau_1}}\left[(\tau_1-1)\E_{S_{\tau_1}}\left[\sum\limits_{k=0}^{\tau_{1}-1}k\one_{[S_k=y]} \right] \right]\right]\right]\right] }_{j- \text{ many expectations}}\right] \\
& = \sum\limits_{z_1,\,\ldots,\,z_j\in\partial_1 V_n}  \E_x\left[\tau_0\one_{[S_{\tau_0}=z_1]}\E_{z_1}\left[(\tau_1-1)\one_{[S_{\tau_1}=z_2]} \ldots \E_{z_{j-1}}\left[(\tau_1-1)\one_{[S_{\tau_1}=z_j]}\E_{z_j}\left[\sum\limits_{k=0}^{\tau_1-1}k\one_{[S_k=y]} \right]\right]\right]\right]\\
& \le \sum\limits_{z_1,\,\ldots,\,z_j\in\partial_1 V_n}  \sum\limits_{\ell_0=0}^\infty \ell_0 p_{\ell_0}(z_j,y)\E_x\left[\tau_0\one_{[S_{\tau_0}=z_1]}\E_{z_1}\left[(\tau_1-1)\one_{[S_{\tau_1}=z_2]} \ldots \E_{z_{j-1}}\left[(\tau_1-1)\one_{[S_{\tau_1}=z_j]}\right]\right]\right]\\
& \le \sum\limits_{z_1,\,\ldots,\,z_j\in\partial_1 V_n}  \sum\limits_{\ell_0=0}^\infty \ell_0 p_{\ell_0}(z_j,y) \sum\limits_{\ell_1=0}^\infty \ell_1 p_{\ell_1}(z_j, z_{j-1}) \ldots \sum\limits_{\ell_j=0}^\infty \ell_j p_{\ell_j}(x,z_1)\\
& \le \sum\limits_{\ell_0,\,\ell_1,\,\ldots\,\ell_j=0}^\infty  \ell_0\ell_1\ldots\ell_j p_{\ell_0+\ell_1+ \ldots+\ell_j}(x,y)\\
&= \sum\limits_{k=0}^\infty \sum\limits_{\ell_1=0}^k \sum\limits_{\ell_2=0}^{k-\ell_1} \ldots \sum\limits_{\ell_j=0}^{k-(\ell_1+\ldots+\ell_{j-1})} (k-(\ell_1+\ldots+\ell_j))\ell_1\ldots \ell_j p_{k}(x,y) \le \sum\limits_{k=0}^\infty k^{2j+1} p_{k}(x,y).
\end{align*}

}

We now use the bound on the derivatives of $g$ from~\eqref{eq:g_deriv_bound} in~\eqref{eq:bound_a_j} to obtain a bound for $a_j$ in terms of $d(x,y)$.  For that we first write $k^{\ell}$ in terms of $\prod_{i=0}^{i_0}(k-i)$, $i_0=0,\,1,\,\ldots,\, \ell-1$. {We observe that}
\begin{align*}
&k^2=k(k-1) + k,\\
&k^3= k\left(k(k-1) + k\right) = \prod_{i=0}^{2}(k-i) + (2+1) \prod_{i=0}^{1}(k-i) +k,\\
&k^4=k \left( \prod_{i=0}^{2}(k-i) + 3\prod_{i=0}^{1}(k-i) +k \right) \\
&\quad= \prod_{i=0}^{3}(k-i) + (3+3) \prod_{i=0}^{2}(k-i) + (2\times 3+1) \prod_{i=0}^{1}(k-i)+k.\\
\end{align*}
In general we have that for any $k,\ell\ge 1$ 
\begin{align*}
k^{\ell}= \alpha^{(\ell)}_{\ell-1} \prod_{i=0}^{\ell-1}(k-i) + \alpha^{(\ell)}_{\ell-2} \prod_{i=0}^{\ell-2}(k-i) +\cdots +\alpha^{(\ell)}_{1} \prod_{i=0}^{1}(k-i) +\alpha^{(\ell)}_{0} k,
\end{align*} 
where the coefficients $\alpha^{(\ell)}_{i}$ for all $\ell \ge 1$ and $i=0,1,\ldots, \ell-1$ are given recursively as follows
\begin{align*}
&\alpha^{(\ell)}_{0}=\alpha^{(\ell)}_{\ell-1}=1,\\
&\alpha^{(\ell)}_{i} = (i+1)\alpha^{(\ell-1)}_{i} + \alpha^{(\ell-1)}_{i-1},\,\,\,1\le i \le \ell-2.
\end{align*}
It follows that for all $\ell \ge 1$ and $i=0,\,1,\,\ldots, \ell-1$
\begin{align}\label{eq:xi_bound}
\alpha^{(\ell)}_{i} \le \ell! \le \ell^\ell.
\end{align}
Now from~\eqref{eq:bound_a_j} we have
\begin{align*}
a_j &\le \sum\limits_{k=0}^\infty k^{2j+1} p_{k}(x,y)\\
&=\sum\limits_{k=0}^\infty p_{k}(x,y)\left[\alpha^{(2j+1)}_{2j} \prod_{i=0}^{2j}(k-i) + \alpha^{(2j+1)}_{2j-1} \prod_{i=0}^{2j-1}(k-i) +\cdots +\alpha^{(2j+1)}_{1} \prod_{i=0}^{1}(k-i) +\alpha^{(2j+1)}_{0} k \right]\\
&= \alpha^{(2j+1)}_{2j} g^{(2j+1)}(1) + \alpha^{(2j+1)}_{2j-1} g^{(2j)}(1)+\cdots +\alpha^{(2j+1)}_{1} g^{(2)}(1)+\alpha^{(2j+1)}_{0} g^{(1)}(1) .
\end{align*}
Now using~\eqref{eq:xi_bound} and~\eqref{eq:g_deriv_bound} we obtain
\begin{align*}
a_j &\le (2j+1) (2j+1)^{2j+1} 3^{(2j)^2} (2j)^{2j}\left(\frac{4(m-1)}{m-2}\right)^{2j+1} \frac{m-1}{m-2}(d(x,y))^{2j+1} (m-1)^{-d(x,y)}\nonumber\\
& \le (2j+1)^{4j+2} 3^{4j^2} \left(\frac{4(m-1)}{m-2}\right)^{2j+1} \frac{m-1}{m-2}(d(x,y))^{2j+1} (m-1)^{-d(x,y)}.
\end{align*}

\end{proof}

\subsection{Bound on the higher derivatives of $\Gamma(x,y|\z)$}\label{subsec:derivative}
In this section we obtain bound for the higher derivatives of the function $g(\z)=\Gamma(x,y|\z)$ evaluated at the point $\z=1$. Recall from~\eqref{eq:lem_1.24} that for $x,\,y\in\T_m$
\begin{align*}
g(\z)=\Gamma(x,y|\z) = \frac{2(m-1)}{m-2+\sqrt{m^2-4(m-1)\z^2}}\left( \frac{m-\sqrt{m^2-4(m-1)\z^2}}{2(m-1)\z}\right)^{d(x,y)}, \quad\z\in\mathbb{C}.
\end{align*}
We prove the following bound.
\begin{lemma}
Let $x,y\in\T_m$ and $d=d(x,y)$. Then for $k\ge 1$
\begin{align}\label{eq:g_deriv_bound}
g^{(k)}(1) \le 3^{(k-1)^2} (k-1)^{k-1}\left(\frac{4(m-1)}{m-2}\right)^k \frac{m-1}{m-2}d^k (m-1)^{-d},
\end{align}
and
\begin{align*}
g(1)= \frac{m-1}{m-2} (m-1)^{-d}.
\end{align*}
\end{lemma}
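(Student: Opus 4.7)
The value of $g(1)$ is immediate by direct substitution. At $\z=1$ one has $\sqrt{m^2-4(m-1)}=m-2$, so the formula for $g$ collapses to $\tfrac{2(m-1)}{2(m-2)}\cdot\bigl(\tfrac{2}{2(m-1)}\bigr)^{d}=\tfrac{m-1}{m-2}(m-1)^{-d}$.

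For the derivative bound I would exploit the product structure. Set $u(\z)=\sqrt{m^2-4(m-1)\z^2}$; using the identity $(m-u)(m+u)=4(m-1)\z^2$ to rewrite the second factor in $g$, we can write $g(\z)=A(\z)B(\z)^{d}$ with
\[
A(\z)=\frac{2(m-1)}{m-2+u(\z)},\qquad B(\z)=\frac{2\z}{m+u(\z)},
\]
where $u(1)=m-2$, $A(1)=(m-1)/(m-2)$, and $B(1)=1/(m-1)$. The plan is then: (i) obtain inductive bounds on $u^{(j)}(1)$ from the relation $u^{2}=m^{2}-4(m-1)\z^{2}$, which on differentiating $j\ge 3$ times yields $\sum_{i=0}^{j}\binom{j}{i}u^{(i)}u^{(j-i)}=0$, expressing $u^{(j)}(1)$ in terms of strictly lower-order derivatives with $u(1)=m-2$ in the denominator; (ii) transfer these to bounds on $A^{(i)}(1)$ and $B^{(i)}(1)$ via the quotient rule applied to $(m-2+u)^{-1}$ and $(m+u)^{-1}$; (iii) apply the Leibniz rule
\[
g^{(k)}(1)=\sum_{j=0}^{k}\binom{k}{j}A^{(k-j)}(1)\,(B^{d})^{(j)}(1)
\]
together with Faà di Bruno for the composition,
\[
(B^{d})^{(j)}(1)=\sum_{\substack{(n_{i})_{i\ge 1}:\\ \sum in_{i}=j}}\frac{j!}{\prod_{i}n_{i}!\,(i!)^{n_{i}}}\,d(d-1)\cdots(d-|n|+1)\,B(1)^{d-|n|}\prod_{i}B^{(i)}(1)^{n_{i}},
\]
where $|n|=\sum_{i\ge 1}n_{i}$. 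Each summand then carries a factor $B(1)^{d-|n|}=(m-1)^{|n|-d}$, a falling factorial bounded by $d^{|n|}\le d^{k}$, and products of the $B^{(i)}(1)$ controlled by step (ii).

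The final step is combinatorial bookkeeping: summing over all partitions, using the crude Stirling bound $k!\le k(k-1)^{k-1}$, and absorbing the number of set partitions of $\{1,\ldots,k\}$ into a factor $3^{(k-1)^{2}}$, while the $m$-dependent contributions from steps (i)--(ii) are collected so as to combine into the clean $\bigl(4(m-1)/(m-2)\bigr)^{k}\cdot(m-1)/(m-2)$ multiplying $d^{k}(m-1)^{-d}$.

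The hardest part will be this last matching. Each summand in the Faà di Bruno expansion comes with its own $m$-dependent prefactor inherited (multiplicatively, through the quotient rule) from the inductive bounds on $u^{(j)}(1)$, and verifying that the resulting sum collapses under crude combinatorial estimates to the stated closed form $3^{(k-1)^{2}}(k-1)^{k-1}\bigl(4(m-1)/(m-2)\bigr)^{k}(m-1)/(m-2)\cdot d^{k}(m-1)^{-d}$ requires careful tracking of constants at each stage of (i)--(iii), rather than any deep new idea.
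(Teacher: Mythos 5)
Your outline takes a genuinely different route from the paper: you keep the $d$-th power $B(\z)^d$ explicit and propose to control its derivatives via Fa\`a di Bruno, whereas the paper eliminates the power first by taking logarithms. Writing $h(\z) := g'(\z)/g(\z)$, the paper observes $g' = gh$ and that $\log g$ contains the $d$-dependence only through the additive term $d \log\bigl((m-\rho(\z))/(2(m-1)\z)\bigr)$, so every $h^{(k)}(1)$ is bounded by an $m$-dependent constant times a \emph{single} factor of $d$. The Leibniz rule applied inductively to $g^{(\ell)} = (gh)^{(\ell-1)}$ then produces the $d^{\ell}$ power and the constants $3^{(\ell-1)^2}(\ell-1)^{\ell-1}\bigl(4(m-1)/(m-2)\bigr)^{\ell}$ in one pass, with no partition sums at all. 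The logarithmic-derivative step is precisely what avoids the combinatorial explosion you are bracing for.

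The gap in your proposal is the unverified final step. You have not written down the step-(i)--(ii) bounds on $u^{(j)}(1)$, $A^{(i)}(1)$, $B^{(i)}(1)$, and you have not actually bounded the Fa\`a di Bruno sum; you simply assert that ``careful tracking of constants'' will recover the stated right-hand side, which is exactly the nontrivial content of the lemma. In particular it is not clear that the $m$-dependent factors from your step-(ii) bounds, multiplied across the varying partition structures and then summed over all partitions of $\{1,\dots,k\}$, will collapse to the single product $\bigl(4(m-1)/(m-2)\bigr)^k\,(m-1)/(m-2)$, nor that the partition count together with the falling factorials fits under $3^{(k-1)^2}(k-1)^{k-1}d^k$. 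As written this is a plausible plan, not a proof. If you want to salvage the route, check $k=1,2$ explicitly (where the Fa\`a di Bruno sum is small) to confirm your target constants are even attainable; but the cleaner fix is to switch to the identity $g'=gh$, after which the whole problem reduces to elementary bounds on the derivatives at $\z=1$ of the single rational function $h$.
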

\begin{proof}
We write $\rho(\z):=\sqrt{m^2-4(m-1)\z^2}$. Then
\begin{align}\label{eq:g(z)}
g(\z)= \frac{2(m-1)}{m-2+\rho(\z)}\left( \frac{m-\rho(\z)}{2(m-1)\z}\right)^{d}.
\end{align}
We have
\begin{align*}
g(1)= \frac{m-1}{m-2} (m-1)^{-d}.
\end{align*}
Taking logarithms on both sides of~\eqref{eq:g(z)} and then differentiating we get
\begin{align*}
\frac{g{'}(\z)}{g(\z)} &= \frac{4(m-1)\z}{(m-2+\rho(\z))\rho(\z)} + \frac{d4(m-1)\z}{(m-\rho(\z))\rho(\z)} - \frac{d}{\z}\\
&=: h(\z).
\end{align*}
Note that here we have used $\rho{'}(\z)=-(4(m-1)\z)/\rho(\z)$. So we have 
\begin{align*}
g{'}(\z)=g(\z)h(\z).
\end{align*}
To obtain bounds for the derivatives of $g$ we first bound $h$ and its derivatives evaluated at $\z=1$. We have
\begin{align*}
h(1)&= \frac{4(m-1)}{2(m-2)^2} + \frac{d4(m-1)}{2(m-2)} - d \le \frac{4(m-1)}{m-2} d.
\end{align*}
Differentiating $h$ we get
\begin{align*}
h{'}(\z) &= \frac{4(m-1)}{(m-2+\rho(\z))\rho(\z)} \left[ 1- \frac{\z\rho{'}(\z)}{m-2+\rho(\z)} - \frac{\z\rho{'}(\z)}{\rho(\z)}\right]\\
\qquad\qquad &+ \frac{d4(m-1)}{(m-\rho(\z))\rho(\z)} \left[ 1+ \frac{\z\rho{'}(\z)}{m-\rho(\z)} - \frac{\z\rho{'}(\z)}{\rho(\z)}\right] -\frac{d}{\z^2}.
\end{align*}
Note that $\rho(1)=m-2$ and $\rho{'}(1)= - (4(m-1))/(m-2)$. Using these values we have
\begin{align}
h{'}(1)&= \frac{4(m-1)}{2(m-2)^2} \left[1+ \frac{4(m-1)}{2(m-2)^2} + \frac{4(m-1)}{(m-2)^2} \right]\nonumber\\
&\qquad\qquad + \frac{d4(m-1)}{2(m-2)} \left[1-\frac{4(m-1)}{2(m-2)} +\frac{4(m-1)}{(m-2)^2}\right] -d\nonumber\\
& \le 3 \frac{4(m-1)}{m-2} \left[ \frac{4(m-1)}{2(m-2)^2} + \frac{d4(m-1)}{2(m-2)} \right]\nonumber\\
& \le 3 \left(\frac{4(m-1)}{m-2}\right)^2 d.\label{eq:h'_bounded}
\end{align}
To obtain a bound on $h^{''}(1)$ we write $h{'}(\z)$ as
\begin{align*}
h{'}(\z) & =  \frac{4(m-1)}{(m-2+\rho(\z))\rho(\z)} + \frac{(4(m-1))^2\z^2}{(m-2+\rho(\z))^2\rho(\z)^2} + \frac{(4(m-1))^2\z^2}{(m-2+\rho(\z))\rho(\z)^3}\\
& + \frac{d4(m-1)}{(m-\rho(\z))\rho(\z)}  - \frac{d(4(m-1))^2\z^2}{(m-\rho(\z))^2\rho(\z)^2} + \frac{d(4(m-1))^2\z^2}{(m-\rho(\z))\rho(\z)^3} - \frac{d}{\z^2}.
\end{align*}
Now differentiating with respect to $\z$ we obtain
\begin{align*}
h^{''}(\z)&= \frac{4(m-1)}{(m-2+\rho(\z))\rho(\z)} \left[ \frac{4(m-1)\z}{(m-2+\rho(\z))\rho(\z)} + \frac{4(m-1)\z}{\rho(\z)^2}\right]\\
&  + \frac{(4(m-1))^2}{(m-2+\rho(\z))^2\rho(\z)^2} \left[ 2\z + \frac{2(4(m-1))\z^3}{(m-2+\rho(\z))\rho(\z)} + \frac{2(4(m-1))\z^3}{\rho(\z)^2} \right] \\
& +  \frac{(4(m-1))^2}{(m-2+\rho(\z))\rho(\z)^3}\left[ 2\z + \frac{(4(m-1))\z^3}{(m-2+\rho(\z))\rho(\z)} + \frac{3(4(m-1))\z^3}{\rho(\z)^2} \right]\\
& +\frac{d4(m-1)}{(m-\rho(\z))\rho(\z)} \left[ -\frac{4(m-1)\z}{(m-\rho(\z))\rho(\z)} + \frac{4(m-1)\z}{\rho(\z)^2} \right]\\
&-\frac{d(4(m-1))^2}{(m-\rho(\z))^2\rho(\z)^2} \left[ 2\z - \frac{2(4(m-1))\z^3}{(m-\rho(\z))\rho(\z)} + \frac{2(4(m-1))\z^3}{\rho(\z)^2} \right]\\
& + \frac{d(4(m-1))^2}{(m-\rho(\z))\rho(\z)^3} \left[ 2\z - \frac{4(m-1)\z^3}{(m-\rho(\z))\rho(\z)} + \frac{3(4(m-1))\z^3}{\rho(\z)^2} \right] + \frac{2d}{\z^3}.
\end{align*}
Putting $\z=1$ we have
\begin{align*}
h^{''}(1)&= \frac{4(m-1)}{2(m-2)^2} \left[ \frac{4(m-1)}{2(m-2)^2} + \frac{4(m-1)}{(m-2)^2}\right]\\
&  + \frac{(4(m-1))^2}{4(m-2)^4} \left[ 2 + \frac{2(4(m-1))}{2(m-2)^2} + \frac{2(4(m-1))}{(m-2)^2} \right] \\
& +  \frac{(4(m-1))^2}{2(m-2)^4}\left[ 2 + \frac{(4(m-1))}{2(m-2)^2} + \frac{3(4(m-1))}{(m-2)^2} \right]\\
& +\frac{d4(m-1)}{2(m-2)} \left[ -\frac{4(m-1)}{2(m-2)} + \frac{4(m-1)}{(m-2)^2} \right]\\
&-\frac{d(4(m-1))^2}{4(m-2)^2} \left[ 2 - \frac{2(4(m-1))}{2(m-2)} + \frac{2(4(m-1))}{(m-2)^2} \right]\\
& + \frac{d(4(m-1))^2}{2(m-2)^3} \left[ 2 - \frac{4(m-1)}{2(m-2)} + \frac{3(4(m-1))}{(m-2)^2} \right] + 2d.
\end{align*}
We observe that the term inside the square bracket in each summand is bounded by $(9(4(m-1)))/(m-2)$ and the other terms are the same as the summands in $h{'}(1)$ except for the last term. So we conclude using~\eqref{eq:h'_bounded} that
\begin{align*}
h^{''}(1) \le 9\cdot 3 \left(\frac{4(m-1)}{m-2}\right)^3 d \le 9\cdot 4 \left(\frac{4(m-1)}{m-2}\right)^3 d.
\end{align*}
In a similar way we can write $h^{(k)}(1)$ so that the term inside the square bracket in each summand is bounded by $(3 (2k-1)(4(m-1)))/(m-2)$ and the other terms are the same as the summands in $h^{(k-1)}(1)$ except the last term. Hence we conclude that 
\begin{align*}
h^{(k)}(1) &\le 3^k(1\cdot 3 \cdot 5\cdots (2k-1)) \left(\frac{4(m-1)}{m-2}\right)^{(k+1)} d\\
&\le 3^k k^k\left(\frac{4(m-1)}{m-2}\right)^{(k+1)} d,
\end{align*}
where we obtain the second inequality by using the relation between the arithmetic and the geometric mean.
We now prove~\eqref{eq:g_deriv_bound} by the method of induction. We have
\begin{align*}
g^{(1)}(1) = g(1) h(1) \le \frac{m-1}{m-2} (m-1)^{-d}\frac{4(m-1)}{m-2} d.
\end{align*}
Assume that~\eqref{eq:g_deriv_bound} holds true for $k=1,\,\ldots,\,\ell-1$.
Now we have
\begin{align*}
g^{(\ell)}(1) &= (gh)^{(\ell-1)}(1)=\sum_{k=0}^{\ell-1}\begin{pmatrix} \ell-1\\ k\end{pmatrix} g^{(\ell-1-k)}(1) h^{(k)}(1)\\
&\le \sum_{k=0}^{\ell-2} \begin{pmatrix} \ell-1\\ k\end{pmatrix}\left[ 3^{(\ell-1-k-1)^2}(\ell-1-k-1)^{\ell-1-k-1}\left(\frac{4(m-1)}{m-2}\right)^{\ell-1-k}\frac{m-1}{m-2}d^{\ell-1-k}\right.\\  
&\left.(m-1)^{-d}\right]3^kk^k\left(\frac{4(m-1)}{m-2}\right)^{(k+1)} d + \frac{m-1}{m-2}d (m-1)^{-d}3^{\ell-1}(\ell-1)^{\ell-1}\left(\frac{4(m-1)}{m-2}\right)^{\ell} d \\
&\le \left[ 3^{(\ell-2)^2} (\ell-1)^{\ell-1}\left(\frac{4(m-1)}{m-2}\right)^{\ell} \frac{m-1}{m-2}d^{\ell} (m-1)^{-d}\right] \sum_{k=0}^{\ell-1} \begin{pmatrix} \ell-1\\ k\end{pmatrix}\\
&= \left[ 3^{(\ell-2)^2} (\ell-1)^{\ell-1}\left(\frac{4(m-1)}{m-2}\right)^{\ell} \frac{m-1}{m-2}d^{\ell} (m-1)^{-d}\right] 2^{\ell-1}\\
& \le 3^{(\ell-1)^2} (\ell-1)^{\ell-1}\left(\frac{4(m-1)}{m-2}\right)^{\ell} \frac{m-1}{m-2}d^{\ell} (m-1)^{-d}.
\end{align*}
Therefore by induction~\eqref{eq:g_deriv_bound} holds for all $k\ge 1$.
\end{proof}

\paragraph*{Availability of data and materials} Data sharing not applicable to this article as no datasets were generated or analysed during the current study.



\appendix
\section{An alternative argument for~\eqref{eq:E_n_bound}}\label{app:alternative}
After the revision of the paper it was pointed out to us by an anonymous referee that an alternative proof can be carried out to quantitatively estimate the error one commits by replacing $G_n$ by $G$. This proof gives a bound comparable to~\eqref{eq:E_n_bound} for points that are far away from the boundary. For completeness we would like to outline this proof here.
\begin{proof}
The proof is based on a double application of the maximum principle for harmonic functions~\cite[Theorem 1.37]{Barlow2017Feb}. Fix $y\in V_n$. We define the function $H_y(\cdot)$ as
\begin{eqnarray*}
V_n&\to& \R\\
x&\mapsto & G_{n}(x,y)-G(x,y).
\end{eqnarray*}
We then set
\[
a:=\sup_{x\in V_{n-1}^{\c}}\left|H_{y}(x)\right|.
\]
We have that 
\[
\begin{cases}
\left|\Delta H_{y}(x)\right|\le2a & x\in V_{n}^{\c}\\
\Delta(\Delta H_{y})(x)=0 & x\in V_{n}
\end{cases}
\]
so that $\text{\ensuremath{\Delta H_{y}}(\ensuremath{\cdot})}$ is
harmonic in $V_{n}$. We can invoke the the maximum principle to say that $\displaystyle\max_{x\in V_{n}}\left|\Delta H_{y}(x)\right|\le 2a.$
Now consider the function
\[
f(x):=a+\frac{2am}{m-2}d(x,\,\partial_{1}V_{n}).
\]
It is clear that $f(x)=a$ on $V_{n}^{\c}$, and moreover that for
$x\neq o$
\begin{eqnarray*}
\Delta f(x) & = & \frac{2a}{m-2}\left[(m-1)\left(d(x,\,\partial_{1}V_{n})-1\right)+\left(d(x,\,\partial_{1}V_{n})+1\right)-md(x,\,\partial_{1}V_{n})\right]\\
 & = & \frac{2a}{m-2}(2-m)=-2a
\end{eqnarray*}

while for $x=o$ we have $\Delta f(x)=-2am/(m-2)\le-2a.$ So the function
\[
H_{y}(\cdot)-f(\cdot)
\]

is subharmonic in $V_{n}$ and again by the maximum principle
\[
\max_{x\in V_{n}}\left|H(\cdot)-f(\cdot)\right|=\max_{x\in V_{n+1}\setminus V_{n}}H_{y}(\cdot)-f(\cdot)\le0
\]
since $\left|H_{y}(x)\right|\le a=f(x)$ by the definition of $a$
for $x\in V_{n+1}\setminus V_{n}.$ Running the same argument for $-f$
rather than $f$ we finally obtain that $\left|H_{n}(x)\right|\le f(x)$
in $V_{n}.$ 

This implies that for $x\in V_{n}$
\begin{eqnarray*}
\left|H_{n}(x)\right| & = & \left|G_{n}(x,y)-G(x,y)\right|\le a\left(1+C(m)d(x,\,\partial_{1}V_{n})\right)\\
 & \le & C'(m)d(x,\,\partial_{1}V_{n})\sup_{x'\in V_{n-1}^{\c}}\left|G(x',y)\right|\\
 &\le &\frac{C'(m)d(x,\,\partial_{1}V_{n})d(y,\,\partial_{1}V_{n})}{(m-1)^{d(y,\,\partial_{1}V_{n})}}.
\end{eqnarray*}
Being our argument symmetric in $x$ and $y$, we can conclude our result.
\end{proof}

\end{document}